\documentclass[letterpaper,12pt]{article}
\usepackage{amsmath, amsthm, amssymb, graphicx,mathrsfs, polynom, setspace}
\input xy
\xyoption{all}
\pagestyle{plain}
\setlength{\textwidth}{6.5in}    
\setlength{\oddsidemargin}{0in}   
\setlength{\evensidemargin}{0in} 
\setlength{\textheight}{8.5in}    
\setlength{\topmargin}{0in}       
\setlength{\headheight}{0in}      
\setlength{\headsep}{0in}         
\setlength{\footskip}{.5in}
\setlength{\parindent}{0pt}

\theoremstyle{definition}
\newtheorem{thm}{Theorem}[section]
\newtheorem{cor}[thm]{Corollary}
\newtheorem{lem}[thm]{Lemma}
\newtheorem{pro}[thm]{Proposition}

\newtheorem{nota}[thm]{Notation}

\newtheorem{dfc}[thm]{Definition-Construction}

\newtheorem{df}[thm]{Definition}
\newtheorem{rem}[thm]{Remark}
\newtheorem{exa}[thm]{Example}

\newcommand{\mO}{\mathcal{O}}

\newcommand{\mN}{\mathcal{N}}
\newcommand{\mM}{\mathcal{M}}
\newcommand{\mI}{\mathcal{I}}

\newcommand{\mEnd}{\mathcal{E}nd}
\newcommand{\mHom}{\mathcal{H}om}

\newcommand{\bD}{\mathbb{D}}

\newcommand{\floor}[1]{\lfloor #1 \rfloor}
\newcommand{\ceiling}[1]{\lceil  #1 \rceil}

\DeclareSymbolFont{AMSb}{U}{msb}{m}{n}
\DeclareMathSymbol{\boldk}{\mathord} {AMSb}{"7C}

\begin{document}

\title{The Lemma on $b$-functions in Positive Characteristic}
\author{Theodore J. Stadnik, Jr.\footnote{Author partially supported by DMS-0354321}\\ Northwestern University and UC Berkeley}
\date{}
\maketitle

\begin{abstract}
    \centering
    \begin{minipage}
{0.6\textwidth} \

Let $X$ be an $F$-finite smooth scheme of essentially finite type over a perfect field.  This article proves the existence of $b$-functions for locally finitely generated unit $F$-modules when equipped with their induced $\bD_X$-module structure.  It is shown that the $b$-function has rational roots and is determined locally in the \'etale topology.

    \end{minipage}
\end{abstract}

\tableofcontents

\section{Introduction}

The existence of the $b$-function is an important result in the theory of $\bD$-modules in characteristic zero.  This polynomial encodes information about singularities of functions, the nearby and vanishing cycles functors, and Hodge theory.  We begin by stating an important case which motivates this theory.\\

\textbf{Theorem.} (Bernstein) If $f: \mathbb{C}^n \rightarrow \mathbb{C}$ is a meromorphic function then there exists a non-zero polynomial $b(s) \in \mathbb{C}[s]$ and a polynomial differential operator $P(s) \in \bD_{\mathbb{C}^n}[s]$ such that
$$b(s)f^s=P(s)f^{s+1}.$$
The unique minimal degree monic polynomial with this property is called the Bernstein-Sato polynomial of $f$.  If $\Gamma_f : \mathbb{C}^n \rightarrow \mathbb{C}^{n+1}$ denotes the graph of $f$, this theorem can be rephrased in terms of studying the action of the Euler vector field on the $\bD$-module ${\Gamma_f}_+\mathcal{O}_X$.\\  

The purpose of this article is to define and explore a theory of $b$-functions for $\bD_X$-modules when $X$ is a smooth variety over a field of positive characteristic and $\bD_X$ is Grothendieck's ring of differential operators. This paper generalizes the Bernstein-Sato polynomial discovered by  M. Musta{\c{t}}{\u{a}} for ${\Gamma_f}_+\mathcal{O}_X$  \cite{Mus}.  Before explaining the main results of this paper further, it will be necessary to provide a brief overview of the theory of $b$-functions over $\mathbb{C}$ and the work of M. Musta{\c{t}}{\u{a}} in positive characteristic.\\ 

Let $X$ be a smooth complex algebraic variety and $Z \subset X$ a smooth hypersurface defined globally by the sheaf of ideals $\mI_Z$. We let $\bD_X$ denote ring of differential operators.  The ideal sheaf $\mI_Z$ induces a natural filtration on the ring $\bD_X$ defined by
 $$V^{i}\bD_X = \{P \in \bD_X | P \mI_Z^{j} \subset \mI_Z^{j+i} \forall j \}.$$  This filtration is called the $V$-filtration on $\bD_X$.  More concretely, if $X = Spec(\mathbb{C}[x_1,...,x_n,t])$ and $Z=\{t=0\}$, then the $V$-filtration on $\bD_X = \mathbb{C}\langle x_1,...,x_n,t,\partial_1,...,\partial_n,\partial_t \rangle$ is given by placing $x_1,..,x_n,\partial_1,...,\partial_n$ in degree $0$, $t$ in degree $1$ and $\partial_t$ in degree $-1$.  The $0^{th}$ component of the associated graded module is $\bD_Z[-\partial_t t]$.  For the benefit of the reader, the remainder of this introduction will be restricted to this case.\\

Given a finitely generated $\bD_X$-module $\mM$ and an $\mO_X$-coherent submodule $M \subset \mM$ generating $\mM$ as a $\bD$-module, we have the $\bD_Z[-\partial_tt]$-module $N_{M}^j=V^j\bD_XM/V^{j+1}\bD_XM$.  \\

\textbf{Definition.} A $b$-function (along $Z$) for a subset $M \subset \mM$ is a (nonzero) polynomial $b(s) \in \mathbb{C}[s]$ such that $b(-\partial_tt+j)$ annihilates $N_{M}^j$ for all $j \in \mathbb{Z}$.  The minimal degree monic polynomial with this property is denoted by $b_M(s)$ and is called the $b$-function.  \\

\textbf{Definition.} The $b$-function (along $Z$) for a $\bD$-module $\mM$ is a polynomial $b(s) \in \mathbb{C}[s]$ with the following properties.
\begin{enumerate}
\item If $\lambda$ is a root of $b(s)$ then $Re(\lambda) \in (0,1]$.
\item For any (local) finite generating set $M \subset \mM$ such that $b_M(s)$ exists, if the real parts of all the roots of $b_M(s)$ are in $(0,1]$ then $b_M(s) | b(s)$.
\item Locally at every point there is a finite generating set $M \subset \mM$ such that $b_M(s)$ exists.
\item $b(s)$ is the minimal monic polynomial with the above property.

\end{enumerate}

We denote this polynomial by $b_{\mM}(s)$ when it exists and is nonzero.  It is known that for all regular holonomic $\bD$-modules $\mM$ the polynomial $b_{\mM}(s)$ exists \cite{Mal} \cite{Sab}.  Furthermore, if $b_{\mM}(s)$ exists then locally $b_M(s)$ exists for all (local) finite subsets $M \subset \mM$.  Using that $(\bD_X,V^{\cdot}\bD_X)$ is a Noetherian filtered ring, one can show that $b_{\mM}9s)$ exists if and only if (locally) for every (local) generator $M \subset \mM$ there is a nonzero polynomial $b(s)$ such that $b(-t\partial_t)$ annihilates $V^{0}\bD_XM/V^1\bD_XM$.  The theorem presented in this article will closely resemble this statement.\\

We now relate the Bernstein-Sato polynomial and the theory of $b$-functions.  Given a regular function $f: \mathbb{A}^n_{\mathbb{C}} \rightarrow \mathbb{C}$,  consider the $\bD_{\mathbb{A}^{n+1}_{\mathbb{C}}}$-module ${\Gamma_{f}}_+\mO_{\mathbb{A}^n_{\mathbb{C}}}$ where $\Gamma_f$ is the inclusion of the graph.  There is an isomorphism of $\bD_{\mathbb{A}^{n+1}_{\mathbb{C}}}$-modules

$${\Gamma_{f}}_+\mO_{\mathbb{A}^n_{\mathbb{C}}} \cong \mathbb{C}[x_1,...,x_n,t,\frac{1}{f-t}]/\mathbb{C}[x_1,...,x_n,t]$$
where the $\bD$-module structure on the right hand module is via the quotient rule.  Let $\delta \in {\Gamma_{f}}_+\mO_{\mathbb{A}^n_{\mathbb{C}}}$ denote the image of the class $[\frac{1}{f-t}]$ under this isomorphism.   It can be shown that $b_{\delta}(s)$ is the Bernstein-Sato polynomial associated to $f$.  Hence, a reasonable approach to defining Bernstein-Sato polynomials in positive characteristic would be via analysis of the action of the Euler operator $-\partial_t t$ on $\delta$.  This was successfully pursued by  M. Musta{\c{t}}{\u{a}} in \cite{Mus}.\\

The main obstacle in analyzing the action of the operator $-\partial_t t \in \bD_X$ in characteristic $p >0$ is that it satisfies the equation $Y^p - Y= 0$.  In order to make a more robust theory, \cite{Mus} considered not only $-\partial_t t$ but all of the higher Euler operators $\Theta_i = \partial_t^{[p^{i-1}]}t^{p^{i-1}}$.  Let $W$ be any $\mathbb{F}_p$ vector space acted on by $\Theta_1,...,\Theta_e$.  For a collection $\lambda_1,...,\lambda_{e} \in \mathbb{F}_p$ we define the multi-eigenspace 
$$W_{\lambda_1,...,\lambda_e} = \{ v \in V | \Theta_i(v) = \lambda_i v \text{ for all } 1 \leq i \leq e \}.$$
\cite{Mus} considered the multi-eigenspaces of the action of the Euler operators on
$$W_e(\delta) = \bD^{e}_{\mathbb{C}[x_1,...,x_n]}[t, \Theta_0,...,\Theta_e]\delta/\bD^e_{\mathbb{C}[x_1,...,x_n]}[t, \Theta_0,...,\Theta_e]t\delta$$
where $\bD^e_X$ is the subalgebra of $p^e$-linear endomorphisms of $\mO_X$. $W^e(\delta)$ is intended to serve as an approximation of the module $V^0\bD \delta / V^1 \bD \delta$ which that is used in characteristic zero.  A priori, each $W_e(\delta)$ could have as many as $p^{e}$ nontrivial multi-eigenspaces.  The main result of \cite{Mus} is that this is not possible and that the nontrivial multi-eigenspaces determine, and are determined by, a finite set of rational numbers encoding information about the singularities of $f$.\\

\textbf{Theorem 6.7 and 6.9 of } \cite{Mus}\textbf{.}
The (nontrivial) multi-eigenspaces are of $W_e(\delta)$ are completely determined by a finite set of rational numbers (the $F$-jumping exponents of $f \in (0,1])$.   This set of numbers is independent of $e$.  It follows that the number of multi-eigenspaces of $W_e(\delta)$ is uniformly bounded.  Moreover, there is an explicit relationship between the multi-eiqenspaces of $W_{e+1}(\delta)$ and the multi-eigenspaces of $W_e(\delta)$.\\

The main theorem of this paper generalizes this result to a special class of $\bD_X$-modules known as locally finitely generated unit $F^m$-modules.  Contextually, this should be understood as the positive characteristic analogue of the generalization of Bernstein-Sato polynomials to regular holonomic $\bD_X$-modules.\\

Let $R$ be a commutative domain of prime characteristic $p >0$.  A unit $F^m$-module over $R$ is a pair of an $R$-module $\mM$ and an isomorphism $\theta^{-1}: (F_R^m)^*(\mM) \rightarrow \mM $ where $(F_R^m)^*$ is pullback along the absolute Frobenius $m$ times.   Such a morphism $\theta^{-1}$ defines an action of $\bD_X$ on $\mM$ by inducing an action by each subalgebra $\bD^{me}$.  A unit $F^m$-module is said to be locally finitely generated if locally near every point there exists a coherent $R$-module $M$ and a morphism $u: M \rightarrow (F_R^{m})^*$ such that 
$$\mM \cong \varinjlim_{e}\left( M \rightarrow^{u} (F_R^m)^*M \rightarrow^{(F_R^m)^*u} (F_R^{2m})^*M \rightarrow .... \right)$$
and the morphism $\theta^{-1}$ matches the natural unit $F^m$ structure on the right hand side.\\

\textbf{Main Theorem \ref{eigenvaluelem}. }(Paraphrased) If $X$ is an $F$-finite smooth scheme of essentially finite type over a perfect field $\boldk$ of positive characteristic, $Z \subset X$ a smooth hypersurface defined locally by $\{t=0\}$, and $(\mM, F^m)$ a locally finitely generated unit $F^{m}$-module locally generated by $A: M \rightarrow F_X^{*n}M$ with $M$ is $\mO_X$-coherent then
\begin{enumerate}
\item The multi-eigenspaces of the higher Euler operators $\Theta_i = \partial_t^{[p^i]}t^{p^i}$ acting on
$$W_e(A) =  V^0\bD_{X}^eM/V^1\bD^e_{X}M$$ 
determines, and is completely determined by, a finite number of rational numbers in $(0,1]$ which are independent of $e$. 
\item The number of nontrivial multi-eigenspaces of $W_e$ is uniformly bounded (independently of $e$).
\item There is an explicit relationship between the non-trivial multi-eigenspaces of $W_e$ and $W_{e+1}$.
\item The list of rational numbers associated to the higher Euler operator actions can be made into a global invariant which is independent of the choice of local generator $A$.
\end{enumerate}

A secondary purpose of this paper is to further explore the definition of a $b$-function in positive characteristic.   Roughly speaking, because the actions of the Euler operators on $W_e(A)$ are completely determined by a finite set of rational numbers $r_1,...,r_j$, we want that $b_A(s) = \prod_i (s-r_i)$ is the $b$-function for the generator $A$.   In the characteristic $0$ case, it is often too difficult to find the minimal $b$-function and finding a function slightly bigger suffices.  Therefore in the positive characteristic setting, we would like to be able to explain what it means for a polynomial $b_A(s) \in \mathbb{R}[s]$ to sufficiently encode the action of the higher Euler operators on $W_e(A)$  but perhaps not be a minimal  choice.  Before we can make a sensible definition, we will first need to revisit the results of \cite{Mus} in more depth.\\

For any regular function $f \in \mO_X$, there is a decreasing family of ideals $\tau(f^{\lambda})$ indexed by $\lambda \in \mathbb{R}_+$ which are analogous to multiplier ideals called test ideals \cite{HY}.   If $f$ admits a lift to $\mathbb{Z}$ then for any fixed $\lambda$, $\tau(f^{\lambda})$ occurs as the mod $p$ reduction of the multiplier ideal for $p \gg 0$.  $\lambda \in \mathbb{R}_+$ is defined to be an $F$-jumping exponent if for all $\epsilon > 0$ the containment $\tau(f^{\lambda-\epsilon}) \supset \tau(f^{\lambda})$ is proper.   It is known that the set of $F$-jumping exponents is discrete and rational.  In particular, the set of $F$-jumping exponents in $(0,1]$ is finite \cite{BMS}.  We can now restate the main theorem \cite{Mus} more precisely.\\

\textbf{Theorem 6.7 of } \cite{Mus}\textbf{.} If $\lambda$ is an $F$-jumping exponent of $f$ contained in $(0,1]$ uniquely define integers $0 \leq j_i(\lambda) <p$ to be such that $$\ceiling{\lambda q^{e+1}}-1 = j_0(\lambda) + j_1(\lambda)p+...+j_{e}(\lambda)p^{e}$$ is the base $p$ expansion of $\ceiling{\lambda q^{e+1}}-1$.  If $b(s) = \prod(s-\lambda)$ where the product is taken over the $F$-jumping exponents of $f$ contained in $(0,1]$ then for each $0 \leq i \leq e$
$$ V^0\bD_R^{e}\delta/V^1\bD_R^{e}\delta=\bigoplus_{\shortstack{$\lambda$\\$b(\lambda) =0$}}\left(V^0\bD_R^{e}\delta/V^1\bD_R^{e}\delta\right)_{j_0(\lambda), j_1(\lambda),...,j_{e}(\lambda)}$$
where $\Theta_i = \partial_t^{[p^{i-1}]}t^{p^{i-1}}$ and $\delta \in (\Gamma_f)_+\mO_X$ the class of $\frac{1}{f-t}$.  \\

Guided by this theorem, we might assume that a reasonable definition for a $b$-function for an arbitrary module generated by $A: M \rightarrow F^*_R(M)$ is any polynomial $b(s)$ such that
$$W_e(A)=\bigoplus_{\shortstack{$\lambda$\\$b(\lambda) =0$}}W_e(A)_{j_0(\lambda), j_1(\lambda),...,j_{e}(\lambda)}$$

Unfortunately, this notion is overdetermined.  To see this, fix the polynomial $b(s)$ and choose $N > 0$ such that the tuples $(j_{e-N}(\lambda),...,j_e(\lambda))$ are distinct for all $e \geq N$.  We will have
$$W_e(A)_{j_0(\lambda), j_1(\lambda),...,j_{e}(\lambda)} = W_e(A)_{j_{e-N}(\lambda),...,j_{e}(\lambda)}$$

Using this we may try to modify our definition for a $b$-function for $A$ to be  any polynomial $b(s)$ such that there exists $N$ with
$$W_e(A)=\bigoplus_{\shortstack{$\lambda$\\$b(\lambda) =0$}}W_e(A)_{j_{e-N}(\lambda),...,j_{e}(\lambda)}$$
for all $e \geq N$.  This second definition is too weak because we cannot recover the roots of $b(s)$ from the eigenspace decomposition.  Luckily, we can prove something that is neither too weak nor too strong.\\

\textbf{Theorem \ref{eigenvaluelem}.} Let $R$ be a commutative $F$-finite ring which is smooth and of essentially finite type over a perfect field $\boldk$ and $A: M \rightarrow F_X^{*n}M$ a root morphism of $R[t]$-modules. If $M$ is $R[t]$-finite then there is a polynomial $b(s)$ and $N \in \mathbb{Z}$ such that the roots of $b(s)$ are rational numbers in $[0,1)$ and
$$W_e(A)=\bigoplus_{\shortstack{$\lambda$\\$b(\lambda) =0$}}W_e(A)_{j_{N}(\lambda),...,j_{e}(\lambda)}$$
for all $e \geq N$.  This prompts us to make the following definition of a $b$-function.\\

\textbf{Definition.} In the above setting, a $b$-function for the root generator $A: M \rightarrow (F_R^m)^*M$ is any polynomial $b(s) \in \mathbb{C}[s]$ such that there exists $N$ and a decomposition
$$W_e(A)=\bigoplus_{\shortstack{$\lambda \in [0,1)$\\$b(\lambda) =0$}}W_e(A)_{j_{N}(\lambda),...,j_{e}(\lambda)}$$
for all $e \geq N$.  It is clear that the set of all such $b$-functions forms an ideal in $\mathbb{C}[s]$ and we will call the minimal monic generator of this ideal $b_{A}$.\\

Our main theorem can now be restated very simply as the existence of a subtle global invariant.\\

\textbf{Main Theorem \ref{eigenvaluethm}.}  If $X$ is an $F$-finite smooth scheme of essentially finite type over a perfect field $\boldk$ of positive characteristic, $Z \subset X$ a smooth hypersurface, and $(\mM, F^m)$ a locally finitely generated unit $F^{m}$-module then there exists a unique monic polynomial $b(s)$ with rational roots associated to $\mM$ such that if (locally) $A: M \rightarrow F^{m*}_XM$ is a root generator of $\mM$ then $b(s)$ divides $b_{\bD^{(m)}_XA}(s)$ for some $m \geq 0$.\\

Here are some quick examples.\\

\textbf{Example from} \cite{Mus}\textbf{.} If $R$ is a commutative regular $F$-finite ring of essentially finite type over a perfect field $\boldk$ and
$$\mM = (\Gamma_f)_+R$$ with standard $F$-structure and $\delta$ as above then
 $$b_{\delta}(s) = \prod_{\lambda \in S}(s-\lambda)$$
where the indexing set $S$ is the set of $F$-jumping exponents for $f$ in $[0,1)$.\\

\textbf{Example \ref{tameexample}.} If $R$ is a commutative regular $F$-finite ring of essentially finite type over a perfect field $\boldk$, $p^m = 1 \mod l$, and 
$$\mM = \bD_{R[t]}\sqrt[l]{t}$$ with standard $F^m$-structure given by $h \mapsto h^{p^m}$ and root generator $\frac{\sqrt[l]{t}}{t}$ then
$b_{\frac{\sqrt[l]{t}}{t}}(s)=s-\frac{1}{l}$.\footnote{Under the Emerton-Kisin-Riemann-Hilbert correspondence this unit $F$-module corresponds to a local system that is tamely ramified at the origin.}    This is the same as $b_{\bold{C}[t,t^{-1}]\sqrt[l]{t}}(s)$ in characteristic zero.\\

\textbf{Example \ref{wildexample}.} If $R=\boldk[t]$ with $\boldk$ a perfect field and
$$\mM = \boldk[t,t^{-1},u]/(u^{p^m}+tu^{p^m-1}-t)$$ with $F^m$-structure given by the pushing forward the obvious $F^m$-structure to $\boldk[t,t^{-1}]$.\footnote{Under the Emerton-Kisin-Riemann-Hilbert correspondence this unit $F$-module corresponds to a local system that is wildly ramified at the origin.}  The $b$-function for the canonical generator divides
$$\prod_{0 \leq a < p^m}(s-\frac{a}{p^m}).$$

\textbf{Strategy of proof.}   The $b$-function as defined is determined \'etale locally so if we assume $X$ is quasi-compact it is enough to prove existence when $X = Spec(R[t)]$ for $R$ a regular ring of essentially finite type over $\boldk$ and $Z=\{t = 0\}$.  Furthermore, if $M_1 \twoheadrightarrow M_2$ then $b_{M_2}(s) | b_{M_1}(s)$ provided the latter exists.  This allows us to further reduce to the case of proving existence when $(\mM,F)$ is generated by a finitely generated free module.\\

When the generating morphism is $A:M \rightarrow F_X^{m*}M$ is free of finite rank $l$ over $R[t]$, we prove that the actions of the Euler operators on
$$V^{0}\bD_X^eM/V^{1}\bD_X^eM$$
are completely determined by certain submodules of $R^{\oplus l}$.  These submodules will be referred to as list test modules \footnote{The preferred terminology ``generalized test ideals" is already in use.} because they generalize test ideals (see \ref{testidealexa}).  For these list test modules we define the notion of jumping numbers.   The jumping numbers enjoy many of the formal properties that $F$-jumping exponents exhibit.  Abstractly, it is precisely these formal properties which allowed \cite{BMS} to conclude that the $F$-jumping exponents are discrete and rational.  Thus, the set of jumping numbers for list test modules is discrete and rational.  The polynomial with these jumping numbers as roots will be a nonzero element in the ideal of functions defining $b_{A}(s)$, proving that $b_A(s)$ exists and has rational roots.\\

The layout of the paper is as follows.  The second section contains an overview and unification of background material for use later in the paper.  The third section is dedicated to the development of the theory of ``list test modules'', their theory of jumping numbers, and the proof that these numbers are discrete and rational.  The fourth section relates the action of the Euler operators to list test modules, defines the notion of the $b$-function, and proves its existence as a consequence of the material in the third section.\\

\textbf{Acknowledgments.}  The authors reliance on the motivation provided by M. Musta{\c{t}}{\u{a}}'s analysis in \cite{Mus} will be clear to the reader.  The author would like to thank the referee for pointing out the reference \cite{Bli} which helped the author form the global definition of the $b$-function.  The referee also provided many comments and suggestions which have undoubtedly improved the presentation of this paper in various locations.

\begin{nota} Throughout this article the following conventions will be used.
\begin{itemize}
\item[-] $\boldk$ is a \textbf{perfect} field of characteristic $p$.
\item[-] $q $ is a power of $p$
\item[-] For a scheme $X$, $F_X$ is the (absolute) $q^{th}$-power Frobenius map on $X$ given by $f \mapsto f^q$.
\item[-] $R$ is a regular Noetherian commutative $\boldk$-algebra and the map $F_R$ is finite.
\item[-] $X$ is a scheme which occurs as the localization of an $F$-finite algebraic variety over $\boldk$.
\item[-] $\bD_X \subset \mEnd_{\boldk}(\mO_X)$ is Grothendieck's sheaf of differential operators.
\item[-] A map of $R$-modules, $T:M \rightarrow N$, is called $q^e$-linear if $T(r^{q^e}m)=r^{q^e}T(m)$ for all $r \in R$ and $m \in M$.
\end{itemize}
\end{nota}

\begin{rem} The use of $F_X$ to denote the $q^{th}$-power Frobenius is slightly nonstandard.  However, in this article it will be convenient to use a notation that allows us to increase the power of $p$ without complicating the notation.
\end{rem}

\section{Background}

This section will provide background information on the structural properties of certain $\bD_X$-modules called unit $F$-modules.  The results in this section are restatements, obvious generalizations, combinations, or immediate corollaries of results contained in \cite{AMBL}, \cite{Bli}, \cite{BMS1}, \cite{EK}, \cite{Has}, and \cite{Lyu}.\\

\subsection{$\bD_X$ in positive characteristic}

\begin{df}(The ring of differential operators)\\
The ring of differential operators (with divided powers) on $X$ inductively by $\bD_X \subset \mEnd_{\boldk}(\mO_X)$ via
$$\bD_X[0] = \{ P_f| P_f(g) = fg \hspace{10pt} \forall g \in \mO_X\}_{f \in \mO_X} \cong \mO_X$$
$$\bD_X[i] = \{P | [P,Q] = P \circ Q - Q \circ P \in \bD_X[i-1]\}$$
and
$$\bD_X = \cup_i \bD_X[i].$$
\end{df}

Let $\mEnd_{q^e}(\mO_X)$ be the subsheaf of $\mEnd_{\boldk}(\mO_X)$ of  $q^e$-linear endomorphisms of $\mO_X$.  That is,
$P \in  \mEnd_{q^e}(\mO_X) \Leftrightarrow P(f^{q^e}g) = f^{q^e}P(g)$ for all $f, g \in \mO_X$.
We denote the subring $\mEnd_{q^e}(\mO_X)$ by $\bD^e_X$.

\begin{thm}\label{Hasthm} \cite[1.2.5]{Has} $\bD_X = \cup_e \bD_X^e$
\end{thm}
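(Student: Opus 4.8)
The plan is to establish the two inclusions after a routine local reduction, using three elementary ingredients. First I would reduce to the affine case: both sides are subsheaves of $\mEnd_{\boldk}(\mO_X)$ and the assertion is local, so it suffices to treat an affine open $\operatorname{Spec} B$ with $B$ regular and, by the standing $F$-finiteness hypothesis, module-finite over $B^{q^{e}}:=F_B^{e}(B)$ for every $e$; there $\bD^{e}_X$ is the coherent sheaf attached to $\mEnd_{B^{q^{e}}}(B)$, so the claim becomes $\bD_B=\bigcup_e\mEnd_{B^{q^{e}}}(B)$. The ingredients are: (i) the identity $Pg^{N}=\sum_{k=0}^{N}\binom{N}{k}g^{N-k}\delta_g^{k}(P)$, valid in any associative ring, where $\delta_g(Q):=[Q,g]=Q\circ g-g\circ Q$; (ii) the congruence $\binom{q^{e}}{k}\equiv 0\pmod p$ for $1\le k<q^{e}$, immediate from $k\binom{q^{e}}{k}=q^{e}\binom{q^{e}-1}{k-1}$ since $q^{e}$ is a power of $p$; and (iii) the identity $\delta_a^{q^{e}}=\delta_{a^{q^{e}}}$ for $a\in B$, which follows by writing $\delta_a=R_a-L_a$ with $L_a,R_a$ commuting in characteristic $p$.

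For $\bD_B\subseteq\bigcup_e\mEnd_{B^{q^{e}}}(B)$: since $\bD_B=\bigcup_m\bD_B[m]$ by definition, it is enough to show $\bD_B[m]\subseteq\mEnd_{B^{q^{e}}}(B)$ whenever $q^{e}>m$. If $P\in\bD_B[m]$ then $\delta_g^{k}(P)=0$ for all $g\in B$ once $k>m$, so putting $N=q^{e}$ in (i) gives, for every $f\in B$,
$$[P,f^{q^{e}}]=\sum_{k=1}^{m}\binom{q^{e}}{k}f^{q^{e}-k}\delta_f^{k}(P)=0,$$
all the binomial coefficients vanishing by (ii). As $[P,f^{q^{e}}]=0$ for every $f$ is exactly the statement that $P$ is $q^{e}$-linear, this inclusion follows; note it needs no finiteness hypothesis.

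For $\mEnd_{B^{q^{e}}}(B)\subseteq\bD_B$: fix $e$, choose generators $e_1,\dots,e_N$ of $B$ as a $B^{q^{e}}$-module, and take $P\in\mEnd_{B^{q^{e}}}(B)$. One checks that $\delta_g$ preserves $\mEnd_{B^{q^{e}}}(B)$ for $g\in B$, and that for $f=\sum_i c_ie_i$ with $c_i\in B^{q^{e}}$ (central in $\mEnd_{B^{q^{e}}}(B)$) one has $\delta_f=\sum_i c_i\,\delta_{e_i}$ as operators on this ring. The $\delta_{e_i}$ pairwise commute, because $B$ is commutative, and satisfy $\delta_{e_i}^{q^{e}}=\delta_{e_i^{q^{e}}}=0$ by (iii), since $e_i^{q^{e}}\in B^{q^{e}}$ is central. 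Hence any composite of more than $N(q^{e}-1)$ of the $\delta_{e_i}$ vanishes — some $\delta_{e_i}$ then occurs at least $q^{e}$ times, and they commute — so $\delta_{f_0}\delta_{f_1}\cdots\delta_{f_m}(P)=0$ for all $f_0,\dots,f_m\in B$ as soon as $m\ge N(q^{e}-1)$, i.e. $P\in\bD_B[N(q^{e}-1)]\subseteq\bD_B$. Together with the first inclusion and $\bD_B=\bigcup_m\bD_B[m]$ this yields the asserted equality.

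\textbf{Main obstacle.} Neither inclusion is deep once these identities are isolated; the one place a hypothesis genuinely enters is the second inclusion, where module-finiteness of $B$ over $B^{q^{e}}$ ($F$-finiteness) is precisely what bounds the order of a $q^{e}$-linear operator uniformly over all $f\in B$ — without it $\bigcup_e\mEnd_{B^{q^{e}}}(B)$ can strictly contain $\bD_B$. The remaining points are bookkeeping: the reduction to the affine $F$-finite picture, and, if one wants it, sharpening the crude bound $N(q^{e}-1)$ to $n(q^{e}-1)$ with $n=\dim B$ by taking the $e_i$ to be the monomials of exponent $<q^{e}$ in a $p$-basis, so that $\mEnd_{B^{q^{e}}}(B)$ is $B^{q^{e}}$-free on the divided-power operators $\partial^{[b]}$, $0\le b<q^{e}$. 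That refinement is not needed for the stated equality.
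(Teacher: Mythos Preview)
Your argument is correct. Both inclusions are handled cleanly: the first via the binomial expansion of $R_f^{q^e}=(L_f+\delta_f)^{q^e}$ together with the vanishing of $\binom{q^e}{k}$ in characteristic $p$ for $0<k<q^e$, and the second via $F$-finiteness, the commutativity of the $\delta_{e_i}$ (from the Jacobi identity and $[e_i,e_j]=0$), and the nilpotence $\delta_{e_i}^{q^e}=\delta_{e_i^{q^e}}=0$. The identification of the obstacle is also right: without $F$-finiteness the second inclusion can fail.

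As for comparison with the paper: there is nothing to compare against. The paper does not prove this result; it simply records it with a citation to Haastert \cite[1.2.5]{Has} and uses it as background input. What you have written is essentially the standard elementary proof of this fact (and indeed close in spirit to how it is done in the literature), so there is no discrepancy to flag. Your sharpened bound $n(q^e-1)$ via a $p$-basis is the usual refinement, though as you note it is unnecessary for the bare equality.
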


\begin{dfc} If $S=R[t]$ then it is not difficult to see that $\partial_t^p =0$ because every polynomial differentiated $p$ times is either $0$ or multiplied by the coefficient $p$ (which is again $0$).  Given this fact, we can define a close substitute for this element by
$$\partial_t^{[p^e]}(rt^n) = \begin{cases} r\binom{n}{p^e}t^{n-p^e} &\text{ if } n \geq p^e\\
                                             0 &\text{ if } n < p^e\end{cases}$$
We can see that $$\partial_t^{[p^e]} \in \bD_S.$$  For a rigorous treatment of divided power structures see \cite{BO}.
\end{dfc}

Finally, it will be useful to understand the behavior of the subalgebras $\bD^e_X$ after \'etale pullback.  The next proposition is similar to \cite[2.14]{Bli}.
\begin{pro}\label{moritaetale}If $\pi: U \rightarrow X$ be an \'etale neighborhood then
\begin{enumerate}
\item $\pi^*\bD^e_X = \bD^e_U.$
\item  $\pi^*\mHom_{\bD^{e}_X}(\mO_X, -) \cong \mHom_{\bD^e_U}(\mO_U, \pi^*(-)).$
\end{enumerate}
\end{pro}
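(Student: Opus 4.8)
The plan is to reduce both statements to the local situation and then exploit the fact that étale morphisms are flat and unramified, so that Frobenius pushforward and étale pullback commute. First I would recall the key compatibility: for an $F$-finite scheme $X$ and an étale morphism $\pi: U \to X$, the natural square relating the Frobenius $F_X$, $F_U$, and $\pi$ is Cartesian, i.e. $U^{(q^e)} \cong U \times_{X} X^{(q^e)}$, where $(-)^{(q^e)}$ denotes the Frobenius twist. This is standard (étale morphisms are stable under base change and the relative Frobenius of an étale morphism is an isomorphism). Equivalently, $(F_U^e)_* \mO_U \cong \pi^* (F_X^e)_* \mO_X$ as $\mO_U$-modules, and more: this identification is compatible with the bimodule structures that enter the description of $q^e$-linear operators. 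The point is that $\bD_X^e = \mEnd_{q^e}(\mO_X)$ is, by Hasse--Schmidt theory (cf.\ Theorem \ref{Hasthm} and its source \cite{Has}), identified with $\mEnd_{\mO_X}\big((F_X^e)_*\mO_X\big)$ acting $q^e$-linearly --- i.e.\ with an endomorphism ring of a locally free sheaf of finite rank (here $F$-finiteness and regularity of $X$ are used to know $(F_X^e)_*\mO_X$ is locally free).

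For part (1): since $\bD_X^e \cong \mEnd_{\mO_X}\big((F_X^e)_*\mO_X\big)$ and $(F_X^e)_*\mO_X$ is locally free of finite rank, étale (indeed flat) pullback commutes with forming $\mHom$ between coherent sheaves when the source is locally free, so
$$\pi^*\bD_X^e \;\cong\; \pi^*\mEnd_{\mO_X}\big((F_X^e)_*\mO_X\big) \;\cong\; \mEnd_{\mO_U}\big(\pi^*(F_X^e)_*\mO_X\big) \;\cong\; \mEnd_{\mO_U}\big((F_U^e)_*\mO_U\big) \;\cong\; \bD_U^e,$$
where the third isomorphism is the Cartesian-square compatibility above. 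One must check this chain of isomorphisms is the one induced by the obvious map $\pi^*\bD_X^e \to \bD_U^e$ (a $q^e$-linear operator on $\mO_X$ pulls back to one on $\mO_U$ since $\pi$ is flat and $\pi^\#$ commutes with $q^e$-th powers); this is a diagram chase using that all maps in sight are the canonical ones.

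For part (2): having identified $\bD_X^e$ and $\bD_U^e$ with endomorphism rings of the locally free sheaves $P_X := (F_X^e)_*\mO_X$ and $P_U := \pi^* P_X$, Morita theory identifies $\mHom_{\bD_X^e}(\mO_X, -)$ with tensoring against a suitable $(\mO_X, \bD_X^e)$-bimodule built from $P_X$ (the "Frobenius descent" / Morita equivalence of \cite{Bli}, cf.\ \cite[2.14]{Bli}, and \cite{AMBL}, \cite{EK}). Then the assertion is the base-change statement for this Morita functor along the flat morphism $\pi$: since everything is expressed through $\mHom$ and $\otimes$ with the locally free module $P_X$, flat pullback passes through, and $\pi^* P_X = P_U$ gives the result. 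Concretely, I would write $\mHom_{\bD_X^e}(\mO_X, \mN) \cong \mHom_{\mO_X}(P_X, \mN) $ up to appropriate twist (using the Morita context), apply $\pi^*$, use flatness of $\pi$ and local freeness of $P_X$ to get $\pi^*\mHom_{\mO_X}(P_X,\mN) \cong \mHom_{\mO_U}(P_U, \pi^*\mN)$, and then re-identify with $\mHom_{\bD_U^e}(\mO_U, \pi^*\mN)$ via part (1) and the Morita context over $U$.

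The main obstacle is purely bookkeeping: verifying that the several canonical identifications --- Hasse--Schmidt description of $\bD_X^e$, the Cartesian Frobenius square, and the Morita context --- are all mutually compatible, so that the composite isomorphism really is induced by the natural transformation $\pi^*\mHom_{\bD_X^e}(\mO_X,-) \to \mHom_{\bD_U^e}(\mO_U,\pi^*(-))$ rather than merely an abstract isomorphism of functors. There is no geometric difficulty beyond flatness and $F$-finiteness; the care is in tracking bimodule structures through the chain, and in reducing to the affine local case $X = \operatorname{Spec} R$, $U = \operatorname{Spec} S$ with $R \to S$ étale, where $(F_R^e)_* R$ is a finite projective $R$-module and all the $\mHom$'s are honest modules.
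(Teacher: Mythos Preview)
Your argument is correct: the Cartesian square for Frobenius under \'etale base change gives $\pi^*(F_X^e)_*\mO_X \cong (F_U^e)_*\mO_U$, and since $(F_X^e)_*\mO_X$ is locally free of finite rank, flat base change for $\mEnd$ and $\mHom$ yields both parts. The bookkeeping you flag (compatibility of the canonical maps) is real but routine.

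The paper does not argue this way at all; it simply cites \cite[2.4]{DMI}, a general statement from the theory of separable algebras over commutative rings. That reference treats the behavior of Azumaya algebras and their module categories under base change, and the proposition follows because $\bD_X^e$ is a trivial Azumaya algebra. Your approach is more concrete and tailored to the situation at hand: you never invoke the general machinery of separable or Azumaya algebras, only the locally-free structure of $(F_X^e)_*\mO_X$ and the standard Frobenius/\'etale compatibility. The advantage of your route is that it is self-contained and makes the role of $F$-finiteness and regularity explicit; the advantage of the paper's citation is brevity and the recognition that this is an instance of a general phenomenon (Morita equivalences and Azumaya data are \'etale-local).
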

\begin{proof}
See \cite[2.4]{DMI}.
\end{proof}

\subsection{Unit $F_X$-modules}

Unit $F$-modules were investigated by G. Lyubeznik in \cite{Lyu} where many of their fundamental properties were first discovered.  Most notably, he defined the notion of a generating morphism and an $F$-finite unit $F$-module.  He also studied how $F$-finite unit $F$-modules, which carry a natural structure of $\bD_X$-modules, had very special properties as $\bD_X$-modules.  We begin by stating a slight variation on the definition of unit $F$-modules given in \cite{Lyu}.\\

\begin{df} A unit $F$-module is a pair $(\mM,F)$ of a quasi-coherent $\mO_X$-module, $\mM$, with an endomorphism $F: \mM \rightarrow \mM$ such that:
\begin{enumerate}
\item $F(fm) = f^qm$ for all $m \in \mM$ and $f \in \mO_X$.
\item The induced map $\theta^{-1}: F_X^*\mM \rightarrow \mM$ defined locally by $\theta^{-1}(f \otimes m) = fF(m)$ is an isomorphism.
\end{enumerate}
\end{df}

There is also a corresponding variation of the construction given for creating unit $F$-modules.\\

\begin{dfc}\cite[1.9]{Lyu} \label{limconst} Let $M$ be a quasi-coherent $\mO_X$-module and $A: M \rightarrow F_X^*M$ a map of $\mO_X$-modules. Iterating pull-back by the Frobenius map yields the directed system 
$$M \stackrel{A}{\longrightarrow} F_X^*M  \stackrel{F_X^*(A)}{\longrightarrow} F_X^{2*}M \stackrel{F_X^{2*}(A)}{\longrightarrow} ...  .$$

Let $\mM$ denote the direct limit of this directed system and $\mu_e: F_X^{e*}M \rightarrow \mM$ the inclusion maps.  By construction, $\mu_{e+1} \circ F_X^{e*}(A) = \mu_{e}$.
Define $F$ to be the endomorphism of $\mM$ given by $F(\mu_e(g \otimes m)) = \mu_{e+1}(g^q \otimes m)$.\\

The pair $(\mM,F)$ is \textbf{the unit $F$-module generated by} $(M,A)$.\\
$(M,A)$ is \textbf{a generating morphism for} $(\mM,F)$.\\
If $A$ is injective the pair $(M,A)$ is \textbf{a root morphism}.\\
$(\mM,F)$ is \textbf{locally F-finite or locally finitely generated} if locally it admits a generating morphism $(M,A)$ with $M$ $\mO_X$-coherent.\\
\end{dfc}

\begin{nota}For a generating morphism $A: M \rightarrow F_X^*M$ we define $A^{i}$ to be the composition
$$M \stackrel{A}{\longrightarrow} F_X^*M  \stackrel{F_X^*(A)}{\longrightarrow} F_X^{2*}M \stackrel{F_X^{2*}(A)}{\longrightarrow} ... \stackrel{F_X^{i*}(A)}{\longrightarrow} F_X^{(i+1)*}M  .$$
\end{nota}

\begin{rem} A unit $F$-module can be regarded as a left module for the sheaf of non-commutative algebras $\mO_X\{F\}$ which is defined to be the quotient of the free $\mO_X$-algebra on the letter $F$ by the two-sided sheaf of ideals generated by $\{f^qF-Ff\}_{f \in \mO_X}$.  It was shown in \cite[6.1.3]{EK} that a unit $F$-module $(\mM,F)$ is locally finitely generated over the ring $\mO_X\{F\}$ if and only if it is locally $F$-finite.
\end{rem}

It will be useful to have a systematic method for changing the unit $F$-structure and generating morphisms when $q$ is replaced by $q^{\gamma}$.\\

\begin{lem}\label{dropdownlem} If $q'=q^{\gamma}$ ($\gamma \geq 1$) is a power of $q$ and $F^{'}_X$ the ${q'}^{th}$-power Frobenius then any unit $F$-module $(\mM,F)$ naturally determines a unit $F^{\gamma}=F^{'}$-module structure on $\mM$ given by $(\mM,F^{'}=F^{\gamma})$.  If $(M,A)$ is a generating morphism for $(\mM,F)$ then $(M, A^{\gamma-1})$ is a generating morphism for $(\mM,F^{'})$.
\end{lem}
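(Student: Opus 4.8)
The statement is essentially bookkeeping once one unwinds the constructions in Definition-Construction \ref{limconst}, so the plan is to verify the two claims directly from the definitions.

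First I would check that $(\mM, F' = F^\gamma)$ is a unit $F'$-module. The compatibility with scalars is immediate: for $f \in \mO_X$ and $m \in \mM$, iterating the relation $F(fm) = f^q m$ gives $F^\gamma(fm) = f^{q^\gamma} m = f^{q'} m$, which is condition (1). For condition (2), I need the induced map $(\theta')^{-1} \colon (F'_X)^* \mM \to \mM$, $f \otimes m \mapsto f F^\gamma(m)$, to be an isomorphism. The clean way is to observe that $(F'_X)^* = (F_X^\gamma)^* = (F_X^*)^\gamma$ and that $(\theta')^{-1}$ factors as the composite
$$
(F_X^*)^\gamma \mM \xrightarrow{\ (F_X^*)^{\gamma-1}\theta^{-1}\ } (F_X^*)^{\gamma-1}\mM \xrightarrow{\ \cdots\ } F_X^*\mM \xrightarrow{\ \theta^{-1}\ } \mM,
$$
a composite of isomorphisms, hence an isomorphism; one then checks on local sections $f \otimes m$ that this composite does send $f \otimes m$ to $f F^\gamma(m)$, using $\theta^{-1}(f \otimes m) = f F(m)$ at each stage. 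This is the only place where the isomorphism hypothesis is really used, and it is routine.

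Next I would verify that $(M, A^{\gamma-1})$ generates $(\mM, F')$. Here $A^{\gamma-1} \colon M \to F_X^{\gamma*}M = (F'_X)^* M$ is the iterate from the Notation following \ref{limconst}. Applying Definition-Construction \ref{limconst} with $q$ replaced by $q' = q^\gamma$ and $A$ replaced by $A^{\gamma-1}$ produces the direct limit of the system
$$
M \xrightarrow{A^{\gamma-1}} (F'_X)^* M \xrightarrow{(F'_X)^*(A^{\gamma-1})} (F'_X)^{2*} M \to \cdots,
$$
i.e.\ the system $M \to F_X^{\gamma*}M \to F_X^{2\gamma*}M \to \cdots$. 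This is the subsystem of the original directed system $M \to F_X^*M \to F_X^{2*}M \to \cdots$ obtained by keeping only the terms in degrees divisible by $\gamma$, and the transition maps match because $(F'_X)^{e*}(A^{\gamma-1}) = F_X^{e\gamma *}(A^{\gamma-1})$ equals the composite of the $\gamma$ consecutive original transition maps $F_X^{e\gamma*}(A), \ldots, F_X^{(e+1)\gamma-1*}(A)$. Since a cofinal subsystem has the same colimit, the resulting $\mO_X$-module is canonically $\mM$ again. Finally I would check that the $F'$-structure on this colimit supplied by \ref{limconst} — namely $\mu'_e(g \otimes m) \mapsto \mu'_{e+1}(g^{q'} \otimes m)$ — agrees under this identification with $F^\gamma$, which again follows by comparing with $F(\mu_e(g\otimes m)) = \mu_{e+1}(g^q \otimes m)$ applied $\gamma$ times and using $\mu'_e = \mu_{e\gamma}$.

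I do not expect any genuine obstacle here; the one point demanding a little care is keeping the identifications $(F_X^\gamma)^* = (F_X^*)^\gamma$ and the indexing of the iterates $A^i$ straight, so that the cofinality argument and the matching of transition maps are stated correctly. Everything else is a direct unwinding of \ref{limconst}.
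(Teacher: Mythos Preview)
Your proposal is correct. The paper actually gives no proof of this lemma at all; it is stated and immediately followed by the next subsection, so the author evidently regards it as a routine consequence of Definition-Construction~\ref{limconst}. Your plan---checking the two axioms for $(\mM,F^{\gamma})$ directly and then identifying the colimit over $(M,A^{\gamma-1})$ with $\mM$ via cofinality of the subsystem indexed by multiples of $\gamma$---is exactly the intended unwinding, and the points you flag as needing care (the identification $(F_X^{\gamma})^* \cong (F_X^*)^{\gamma}$ and the indexing of $A^{i}$) are the only places where one could slip.
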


\subsection{$\bD_X$-actions on unit $F$-modules}

We will now begin the investigation into the relationship between unit $F$-modules and $\bD_X$-modules.  While this relationship is mentioned in \cite{Lyu}, there is an earlier and more general categorical approach presented in \cite{Has}.  The approach of the latter reference is used in this article to illustrate the category of unit $F$-modules as a full subcategory of ``periodic'' $\bD_X$-modules.\\

\begin{rem} By \ref{Hasthm},  $\bD_X \cong \cup_{e} \bD^e_X$ where $ \bD^e_X=\mEnd_{q^e}(\mO_X)$.  As $X$ is smooth, $\mEnd_{q^e}(\mO_X) \cong \mEnd_{\mO_X}(F_*^{e}\mO_X)$ is a trivial Azumaya algebra and via the Morita equivalence, the category of left modules of this ring is equivalent to the the category of left $\mO_X$-modules.
\end{rem}

The previous remark implies that we may view a left $\bD_X$-module, $\mM$, as a quasi-coherent $\mO_X$-module with compatible actions of the trivial Azumaya algebra $\bD^e_X$.  For each $e$ we can apply the Morita equivalence functor to obtain a left $\mO_X$-module $\mM_{e}$.  The modules will have certain compatibility properties in terms of $e$.  Hence, we expect that a left $\bD_X$-module may be viewed as a system of $\mO_X$-modules with compatibility conditions.  The following theorem from makes this process precise.\\

\begin{thm} \cite[2.2.4]{Has} The category of left $\bD_X$-modules is equivalent to the category of diagrams.\\
$$... \rightarrow \mM_{i+1} \rightarrow^{\phi_{i}} \mM_{i} \rightarrow ... \rightarrow \mM_0$$
subject to the following conditions:
\begin{enumerate}
\item Each $\mM_{i+1}$ is a quasi-coherent sheaf on $X$.
\item $\phi_i(fm) = f^q \phi_i(m)$ for all $i \geq 0, f \in \mO_X$ and $m \in \mM_i$.
\item $\phi_i$ induces an isomorphism $F_X^*\mM_{i+1} \rightarrow \mM_{i}$ defined locally by $f \otimes m \mapsto f\phi_i(m)$.
\end{enumerate}
\end{thm}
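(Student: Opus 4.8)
The plan is to reduce the statement to a level-by-level assertion, then apply Morita theory together with one computation identifying the transition functors with Frobenius pullback. First I would use \ref{Hasthm} to write $\bD_X=\varinjlim_e\bD^e_X$ as a filtered union of subsheaves of rings, with $\bD^0_X=\mO_X$ and $\bD^e_X\subset\bD^{e+1}_X$ (a $q^e$-linear operator is a fortiori $q^{e+1}$-linear). As this is a filtered colimit, a left $\bD_X$-module is the same datum as a quasi-coherent $\mO_X$-module carrying compatible left $\bD^e_X$-module structures for all $e$, so it suffices to describe each category $\bD^e_X\text{-Mod}$ and the restriction-of-scalars functors $\bD^{e+1}_X\text{-Mod}\to\bD^e_X\text{-Mod}$. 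For the first: since $X$ is smooth and $F$-finite, Kunz's theorem makes $F_*^e\mO_X$ a finite locally free, faithful $\mO_X$-module, hence a progenerator, so $\bD^e_X=\mEnd_{\mO_X}(F_*^e\mO_X)$ is a trivial Azumaya algebra and, by the Morita equivalence recalled above (as in \ref{moritaetale}), $\mHom_{\bD^e_X}(\mO_X,-)$ is an equivalence $\bD^e_X\text{-Mod}\xrightarrow{\ \sim\ }\mO_X\text{-Mod}$ with quasi-inverse $F_*^e\mO_X\otimes_{\mO_X}(-)$, the identification of the scalars $Z(\bD^e_X)\cong\mO_X$ going through the $q^e$-power map (this is where the Frobenius twists enter). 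It is the identity for $e=0$ and preserves quasi-coherence. Writing $\mM_e:=\mHom_{\bD^e_X}(\mO_X,\mM)$ for a $\bD_X$-module $\mM$ (so $\mM_0=\mM$) and letting $\phi_e\colon\mM_{e+1}\to\mM_e$ be induced by restriction of scalars along $\bD^e_X\subset\bD^{e+1}_X$, conditions (1) and (2) of the statement then fall out immediately, the relation $\phi_e(fm)=f^q\phi_e(m)$ being a routine unwinding of the twisted module structures.

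The crux, and the only step I expect to require genuine work, is condition (3): that $\phi_e$ induces an isomorphism $F_X^*\mM_{e+1}\xrightarrow{\ \sim\ }\mM_e$, $f\otimes m\mapsto f\phi_e(m)$ --- equivalently, that under the Morita equivalences restriction of scalars $\bD^{e+1}_X\text{-Mod}\to\bD^e_X\text{-Mod}$ becomes Frobenius pullback $F_X^*$. I would prove this by reducing to one bimodule computation. De-Moritaing gives $\mM\cong F_*^{e+1}\mO_X\otimes_{\mO_X}\mM_{e+1}$ as a $\bD^{e+1}_X$-module, hence also as a $\bD^e_X$-module; because the $q^{e+1}$-power multiplications are central in $\bD^e_X$, the functor $\mHom_{\bD^e_X}(\mO_X,-)$ commutes with $-\otimes_{\mO_X}\mM_{e+1}$, so everything reduces to identifying the $\mO_X$-bimodule $\mHom_{\bD^e_X}(\mO_X,F_*^{e+1}\mO_X)$ --- the Morita image of $F_*^{e+1}\mO_X=F_*^e(F_*\mO_X)$, regarded as a $\bD^e_X$-module --- with the bimodule underlying $\mO_X$ whose right action is twisted by Frobenius, i.e.\ the one representing $F_X^*$. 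Passing to a local chart on which $F_*^e\mO_X$ is free, so that $\bD^e_X$ becomes a matrix algebra over $\mO_X$, this is the concrete (projection-formula type) identity: $F_*^e\mO_X\otimes_{\mO_X}N$, with the central copy of $\mO_X$ acting by ordinary multiplication on the left tensor factor, is canonically $F_X^*N$. Keeping straight which of the ambient $\mO_X$-module structures (the honest $\bD^0_X\subset\bD^e_X$, the central copy of $q^e$-powers, the $q^{e+1}$-twisted structure on $F_*^{e+1}\mO_X$) plays which role is the delicate part; Kunz's theorem is exactly what turns this into bookkeeping rather than geometry.

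Granting this, $\mM\mapsto(\mM_e,\phi_e)$ is a functor from left $\bD_X$-modules to the category of diagrams. For a quasi-inverse I would send a diagram $(\mM_i,\phi_i)$ to $\mM_0$ equipped with a $\bD_X$-action built as follows: iterating the isomorphisms of (3) yields $\mM_0\cong F_X^{e*}\mM_e$, and $\bD^e_X=\mEnd_{q^e}(\mO_X)$ acts naturally on $F_X^{e*}\mM_e$ through the left $\mO_X$-tensor-factor; transporting these actions to $\mM_0$, and checking from the compatibility of the $(3)$-isomorphisms with the $\phi_i$ that they agree along $\bD^e_X\subset\bD^{e+1}_X$, produces a left $\bD_X$-module structure on $\mM_0$. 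That the two constructions are mutually quasi-inverse, and natural, is then formal, resting on the Morita equivalences of the first paragraph and the identification of the transition functor from the second. So the whole proof comes down to that single matrix-algebra computation.
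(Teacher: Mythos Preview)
The paper does not actually prove this theorem: it is quoted from Haastert \cite[2.2.4]{Has} and explicitly followed by a remark stating that ``a proof will not be presented,'' which then sketches only the construction of the $\bD_X$-action on $\mM_0$ from a diagram. That sketch coincides with your quasi-inverse (transport the natural $\bD^e_X$-action on $F_X^{e*}\mM_e$ through the iterated isomorphisms of condition (3)). Your broader Morita-theoretic outline --- identifying each $\bD^e_X$-Mod with $\mO_X$-Mod via $\mHom_{\bD^e_X}(\mO_X,-)$ and then computing that restriction along $\bD^e_X\subset\bD^{e+1}_X$ becomes $F_X^*$ --- is the standard argument and looks correct; the bookkeeping you flag about the several $\mO_X$-structures is indeed the only place requiring care.
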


\begin{rem} While a proof will not be presented, it will be useful to discuss how from such a diagram we may construct a $\bD_X$ action on $\mM_0$.  Given $P \in \bD_X^e$ and $m \in \mM_0$, consider the given isomorphism $\xi^{-1}: F^{e*}_X\mM_{e} \rightarrow \mM_0$. $F^{e*}_X\mM_{e}$ \footnote{The convention in the labeling of $\xi^{-1}$ is used to be compatible with the notation in \cite{Lyu}.} carries a natural action of $\bD_X^e$ and we define $Pm = \xi^{-1} P \xi(m)$.  
\end{rem}

\begin{cor} A unit $F$-module, $(\mM,F)$, on $X$ has a natural structure of a $\bD_X$-module.
\end{cor}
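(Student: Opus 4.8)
The plan is to realize $(\mM,F)$ as an object of the category of diagrams appearing in the equivalence of \cite[2.2.4]{Has} quoted just above, and then simply read off the $\bD_X$-module structure it carries.

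Given a unit $F$-module $(\mM,F)$, I would form the \emph{constant} diagram
$$\cdots \longrightarrow \mM \stackrel{F}{\longrightarrow} \mM \stackrel{F}{\longrightarrow} \cdots \stackrel{F}{\longrightarrow} \mM,$$
that is, set $\mM_i = \mM$ and $\phi_i = F$ for every $i \geq 0$, and then check the three conditions of that theorem. Condition (1) holds because $\mM$ is quasi-coherent by hypothesis. Condition (2), the $q$-semilinearity of each $\phi_i$, is exactly the first axiom in the definition of a unit $F$-module applied to $F$. Condition (3) asks that $\phi_i$ induce an isomorphism $F_X^*\mM_{i+1} \to \mM_i$ via $f \otimes m \mapsto f\phi_i(m)$; since $\mM_{i+1} = \mM_i = \mM$ and $\phi_i = F$, this induced map is precisely the map $\theta^{-1}\colon F_X^*\mM \to \mM$ of the second axiom, which is an isomorphism by hypothesis — and it is one and the same isomorphism at every index, which is what makes the constant diagram legitimate. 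Thus $(\mM,F)$ determines an object of the diagram category, and the equivalence of \cite[2.2.4]{Has} transports it to a left $\bD_X$-module whose underlying $\mO_X$-module is $\mM_0 = \mM$.

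For concreteness one can unwind the resulting action following the remark preceding this corollary: for $P \in \bD^e_X = \mEnd_{q^e}(\mO_X)$ and $m \in \mM$, iterate $\theta^{-1}$ $e$ times to obtain the isomorphism $\xi^{-1}\colon F_X^{e*}\mM \to \mM$, let $P$ act on $F_X^{e*}\mM$ through its $\mO_X$-factor (legitimate because $P$ is $q^e$-linear, hence respects the $F_X^e$-twisted module structure), and set $Pm := \xi^{-1}(P\,\xi(m))$. Compatibility of these actions as $e$ grows — needed so that they glue to a single action of $\bD_X = \cup_e \bD^e_X$ (Theorem \ref{Hasthm}) — is part of what the cited equivalence already supplies.

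There is essentially no obstacle here: the whole content sits in \cite[2.2.4]{Has}, and the corollary is just the observation that a unit $F$-module is the same datum as a constant diagram of the kind classified there. The only points deserving a line of care are verifying that condition (3) is satisfied with a \emph{single} isomorphism $\theta^{-1}$ at every stage, and, if one wants the structure to be usable downstream, checking that it coincides with the $\bD_X$-action one writes down directly on the limit $\varinjlim_e F_X^{e*}M$ of Definition-Construction \ref{limconst} when $\mM$ is presented by a generating morphism $(M,A)$.
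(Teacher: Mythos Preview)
Your proposal is correct and follows exactly the paper's approach: form the constant diagram $\cdots \xrightarrow{F} \mM \xrightarrow{F} \mM \xrightarrow{F} \cdots \xrightarrow{F} \mM$ and invoke the preceding theorem. The paper's own proof is a single sentence to this effect; your verification of conditions (1)--(3) simply spells out what the paper leaves implicit.
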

\begin{proof}
Consider the diagram,
$$... \xrightarrow{F} \mM \xrightarrow{F} \mM \xrightarrow{F} ... \xrightarrow{F} \mM$$
and use the previous theorem.
\end{proof}

\begin{rem} If $q'$ is a power of $q$ then changing the $q$-structure on a unit $(\mM,F)$-module to a $q'$-structure as in \ref{dropdownlem} results in the same $\bD_X$-structures on the module $\mM$.
\end{rem}

\begin{pro}\label{dactcor} If $(\mM,F)$ is generated by $(M,A)$ then the action of $P \in End_{q^e}(\mO_X)$ on $\mu_0(m)$ is given by $P\mu_0(m) = \mu_e(P(A^{e-1}m))$ where the action of $P$ on $F_X^{e*}M$ is the natural action $P(f \otimes m') = P(f) \otimes m'$
\end{pro}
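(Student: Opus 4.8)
The plan is to unwind the description of the $\bD_X$-module structure recalled in the remark preceding the statement and match it against the direct-limit presentation $\mM = \varinjlim_e F_X^{e*}M$. For the diagram $\cdots \xrightarrow{F} \mM \xrightarrow{F} \mM \xrightarrow{F} \cdots$ attached to $(\mM,F)$, condition (3) of \cite[2.2.4]{Has} identifies each structure isomorphism $F_X^*\mM \to \mM$ with $\theta^{-1}\colon f\otimes n \mapsto fF(n)$; composing $e$ of them identifies $\xi^{-1}\colon F_X^{e*}\mM \to \mM$ with the iterate $\theta^{-e}$, which sends $f\otimes n \mapsto fF^e(n)$. Hence the recipe of the remark reads: for $P \in \bD_X^e = \mEnd_{q^e}(\mO_X)$ and $x \in \mM$, one has $Px = \theta^{-e}\bigl(P\,\theta^e(x)\bigr)$, where $P$ acts on $F_X^{e*}\mM$ by $P(f\otimes n)=P(f)\otimes n$ and $\theta^e = (\theta^{-e})^{-1}$.

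First I would record the compatibility $\mu_e = \theta^{-e}\circ F_X^{e*}(\mu_0)$ as maps $F_X^{e*}M \to \mM$, proved by induction on $e$: the case $e=0$ is trivial, and for the inductive step one checks $\mu_{e+1} = \theta^{-1}\circ F_X^*(\mu_e)$ directly from the definition of $F$ on the colimit together with $\mO_X$-linearity of the $\mu$'s (both sides send $g\otimes(h\otimes m)$, viewed in $F_X^*(F_X^{e*}M)\cong F_X^{(e+1)*}M$, to $g\,\mu_{e+1}(h^q\otimes m)$), and then $\theta^{-1}\circ F_X^*(\theta^{-e}) = \theta^{-(e+1)}$ is the defining relation of the iterate. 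Combining this with the direct-limit relation $\mu_e\circ A^{e-1} = \mu_0$, one writes $\mu_0(m) = \mu_e(A^{e-1}m) = \theta^{-e}\bigl(F_X^{e*}(\mu_0)(A^{e-1}m)\bigr)$ and applies $\theta^e$ to obtain the key identity $\theta^e(\mu_0(m)) = F_X^{e*}(\mu_0)(A^{e-1}m)$ in $F_X^{e*}\mM$.

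The rest is substitution into the recipe of the first paragraph. Expanding $A^{e-1}m = \sum_j f_j\otimes m_j$ in $F_X^{e*}M$ with $f_j \in \mO_X$, $m_j \in M$, the key identity gives $\theta^e(\mu_0(m)) = \sum_j f_j\otimes\mu_0(m_j)$; the natural action of $P$ turns this into $\sum_j P(f_j)\otimes\mu_0(m_j)$; and $\theta^{-e}$ then returns $\sum_j P(f_j)F^e(\mu_0(m_j)) = \sum_j P(f_j)\mu_e(1\otimes m_j) = \mu_e\bigl(\sum_j P(f_j)\otimes m_j\bigr) = \mu_e\bigl(P(A^{e-1}m)\bigr)$, where in the last equality $P$ acts on $F_X^{e*}M$ by $P(f\otimes m') = P(f)\otimes m'$. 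This is exactly the asserted formula $P\mu_0(m) = \mu_e(P(A^{e-1}m))$.

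I do not expect a genuine obstacle; the single point that demands care is the bookkeeping of two a priori distinct ``natural $\bD_X^e$-actions'' — one on $F_X^{e*}\mM$, through which the $\bD_X$-structure on $\mM$ is defined, and one on $F_X^{e*}M$, appearing in the statement — and the observation that $F_X^{e*}(\mu_0)$ intertwines them, which is immediate because both operators act only on the $\mO_X$-coefficient and leave the module factor fixed. One should likewise keep explicit the identification $F_X^*(F_X^{e*}M)\cong F_X^{(e+1)*}M$ used in the induction, and the fact that $P(f\otimes m')=P(f)\otimes m'$ is well defined precisely because $P$ is $q^e$-linear, so that the displayed chain of equalities type-checks.
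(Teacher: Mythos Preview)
Your argument is correct and is exactly the unwinding the paper has in mind: the proposition is stated without proof, as an immediate consequence of the remark preceding it, and your verification of $\mu_e=\theta^{-e}\circ F_X^{e*}(\mu_0)$ together with $\mu_0=\mu_e\circ A^{e-1}$ is precisely the bookkeeping needed to justify that remark in the direct-limit presentation. There is nothing to add; your closing observations about why $F_X^{e*}(\mu_0)$ intertwines the two natural $\bD_X^e$-actions and why $P(f\otimes m')=P(f)\otimes m'$ is well defined are the only subtleties, and you have handled them.
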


The next corollary will be useful in understanding the action of the Euler operators in terms of the generating morphism.\\

\begin{cor}\label{gencor} If $q=p^{\gamma}$, $X'=X\times_{\boldk} Spec(\boldk[t])$, $M$ is a unit $F_X$-module, and $\{\theta_i\}_{i=1}^{\gamma e} \in \bD^e_{X'}$ the operators $\theta_i = t^{p^{i-1}}\partial_t^{[p^{(i-1)}]}$ then
$$\bD_X^{e}[t,\theta_1,...,\theta_{\gamma e}]\mu_0(M) = \mu_{e}(\bD_X^{e}[t,\theta_1,...,\theta_{\gamma e}]A^{e-1}M)$$
and
$$\bD_X^{e}[t,\theta_1,...,\theta_{\gamma e}]\mu_0(tM) = \mu_{e}(\bD_X^{e}[t,\theta_1,...,\theta_{\gamma e}]tA^{e-1}M).$$\\

In particular, instead of studying the action of the Euler operators on 
$$ \bD_X^{e}[t,\theta_1,...,\theta_{\gamma e}]\mu_0(M)/\bD_X^{e}[t,\theta_1,...,\theta_{\gamma e}]\mu_0(tM)$$
it is enough to study their action on

$$\bD_X^{e}[t,\theta_1,...,\theta_{\gamma e}]A^{e-1}M/\bD_X^{e}[t,\theta_1,...,\theta_{\gamma e}]tA^{e-1}M.$$
\end{cor}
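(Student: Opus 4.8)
The statement is the $X'$-analogue of Proposition \ref{dactcor} applied one operator at a time, so the plan is: (i) pass to the base change of $\mM$ to $X'$, (ii) check that the subalgebra $\mathcal{E}:=\bD^e_X[t,\theta_1,\dots,\theta_{\gamma e}]$ really sits inside $\bD^e_{X'}=\mEnd_{q^e}(\mO_{X'})$, and (iii) unwind Proposition \ref{dactcor} termwise.

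For (i) I would work with the base-changed unit $F_{X'}$-module $\mM':=\mM\otimes_{\mO_X}\mO_{X'}$ on $X'=X\times_{\boldk}\mathrm{Spec}(\boldk[t])$, which is generated by $(M\otimes_{\boldk}\boldk[t],\,A\otimes\mathrm{id})$; its colimit maps $\mu_e\colon F_{X'}^{e*}(M\otimes\boldk[t])\to\mM'$ extend those of $\mM$, the submodules $A^{e-1}M$ and $tA^{e-1}M$ are read inside $F_{X'}^{e*}(M\otimes\boldk[t])$ via $m\otimes1\mapsto A^{e-1}(m)\otimes1$ (using that $A\otimes\mathrm{id}$ and its Frobenius iterates are $\mO_{X'}$-linear), and $\mu_0(M),\mu_0(tM)$ are precisely the images appearing in the statement. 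So the corollary loses nothing by being proved for $\mM'$.

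Step (ii) is the only place with something to verify. Extending a $q^e$-linear operator on $\mO_X$ trivially in the $t$-direction gives $\bD^e_X\hookrightarrow\bD^e_{X'}$, and $t\in\mO_{X'}=\bD^0_{X'}\subseteq\bD^e_{X'}$; since $\bD^e_{X'}$ is closed under composition it therefore suffices to show $\partial_t^{[p^{i-1}]}\in\bD^e_{X'}$ for $1\le i\le\gamma e$, from which $\theta_i=t^{p^{i-1}}\partial_t^{[p^{i-1}]}\in\bD^e_{X'}$ and then $\mathcal{E}\subseteq\bD^e_{X'}$ follow. For the divided-power operator I would check directly that $\partial_t^{[k]}$ is $q^e$-linear whenever $k<q^e=p^{\gamma e}$: $q^e$-linearity amounts to $\binom{mq^e+n}{k}\equiv\binom{n}{k}\pmod p$ for all $m,n\ge0$, and this holds by Lucas's theorem because every nonzero base-$p$ digit of $k$ lies below position $\gamma e$, so adding $mq^e$ does not disturb the relevant digits. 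This is exactly why the index of $\theta_i$ must stop at $\gamma e$.

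Given (ii), fix $P\in\mathcal{E}$ and $m\in M$. Proposition \ref{dactcor} applied to $\mM'$ gives $P\mu_0(m)=\mu_e\bigl(P(A^{e-1}m)\bigr)$, and the same identity with $tm$ in place of $m$ gives $P\mu_0(tm)=\mu_e\bigl(P(tA^{e-1}m)\bigr)$ since $A\otimes\mathrm{id}$ is $\mO_{X'}$-linear. As $\mu_e$ is additive, an arbitrary element $\sum_jP_j\mu_0(m_j)$ of $\mathcal{E}\mu_0(M)$ equals $\mu_e\bigl(\sum_jP_j(A^{e-1}m_j)\bigr)\in\mu_e(\mathcal{E}A^{e-1}M)$ and conversely, proving $\mathcal{E}\mu_0(M)=\mu_e(\mathcal{E}A^{e-1}M)$ and likewise with $tM$. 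For the final assertion: $t\in\mathcal{E}$ forces $\mu_e(\mathcal{E}tA^{e-1}M)\subseteq\mathcal{E}A^{e-1}M$, so $\mu_e$ induces an $\mathcal{E}$-linear (hence $\theta_i$-equivariant) surjection $\mathcal{E}A^{e-1}M/\mathcal{E}tA^{e-1}M\twoheadrightarrow\mathcal{E}\mu_0(M)/\mathcal{E}\mu_0(tM)$; when $A$ is a root morphism, regularity of $R$ makes $F_{X'}$ flat, every transition map and hence $\mu_e$ injective, and this surjection becomes an isomorphism, which is what reduces the study of the Euler-operator action upstairs to the generator side. I expect no real obstacle: the binomial computation in (ii) is the only calculation, and the rest is formal manipulation of the direct limit defining $\mM$.
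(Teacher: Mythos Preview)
Your argument is correct and is precisely the natural unwinding the paper has in mind: in the text the result is stated as an immediate corollary of Proposition~\ref{dactcor} with no separate proof, so your steps (i)--(iii) are exactly the expected justification, including the Lucas-theorem check that $\partial_t^{[p^{i-1}]}\in\bD^e_{X'}$ for $i\le\gamma e$.

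One minor remark: your base-change maneuver in (i) is a reasonable way to make sense of the slightly ambiguous hypothesis ``$M$ is a unit $F_X$-module''; in the paper's later usage (Section~4) one is already working with a generating morphism $(M,A)$ for a unit $F$-module on $X'=\mathrm{Spec}(R[t])$, so the base change is not strictly needed there, but it does no harm. Your observation that only a $\theta_i$-equivariant \emph{surjection} is guaranteed in general, upgraded to an isomorphism when $A$ is a root morphism, is also the right level of precision---the paper only ever uses the surjection direction (eigenvalues downstairs are among those upstairs), which is what ``it is enough to study'' means here.
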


\subsection{$[\frac{1}{q^e}]$ powers of (non-unit) submodules}

This section will review and explore fractional powers of (non-unit) submodules of unit $F$-modules.  The study of fraction powers of (non-unit) submodules has origins in \cite{AMBL}.

\begin{df}\label{fracdf} If $(\mM,F)$ is a unit $F$-module and $\mN \subset \mM$ is any subsheaf of $\mO_X$-submodules (possibly not unit $F$) then define
$\mN^{[q^e]}$ to be the image under the restriction of the structure map $F^{e*}_X \mN \rightarrow \mM$ of $(\mM,F'=F^e)$ (see \ref{dropdownlem} with $\gamma = e$)). We  define fractional powers by defining $\mN^{[\frac{1}{q^e}]}$ to be the minimal module $\mN'$ with $\mN'^{[q^e]} \supset \mN$. 
\end{df}
\begin{rem}\label{fracrem}
Under the  Morita identification $\mN \cong \mHom_{\bD^e_X}(\mO_X,F^{e*}_X\mN)$ the sheaf $\mN^{[\frac{1}{q^e}]}$ is identified with the subsheaf  $\mHom_{\bD^e_X}(\mO_X,\bD^e_X\mN)$.
\end{rem}

The next proposition shows that these (fractional) powers are stable under \'etale pull-back.\\

\begin{pro}\label{etalelocalcor} If $(\mM,F)$ is a unit $F$-module and $\mN \subset \mM$ any subsheaf of $\mO_X$-submodules (possibly not unit $F$) and $\pi: Y \rightarrow X$ is a flat morphism then
\begin{enumerate}
\item $\pi^*(\mN^{[{q^e}]}) = (\pi^*\mN)^{[q^e]}.$
\item $\pi^*(\mN^{[\frac{1}{q^e}]}) = (\pi^*\mN)^{[\frac{1}{q^e}]}$ if $\pi$ is \'etale.
\end{enumerate}
\end{pro}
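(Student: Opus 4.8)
The plan is to reduce both assertions to the exactness of flat pullback, combined with two compatibilities: the absolute Frobenius commutes with every morphism of schemes, and (only for the \'etale part) the Morita functor $\mHom_{\bD^e_X}(\mO_X,-)$ commutes with \'etale pullback, which is exactly Proposition \ref{moritaetale}.

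For part (1), I would start from Definition \ref{fracdf}: $\mN^{[q^e]}$ is the image of the composite $F_X^{e*}\mN \hookrightarrow F_X^{e*}\mM \xrightarrow{\theta^{-1}} \mM$, where $\theta^{-1}$ is the structure isomorphism of the unit $F^e$-module $(\mM,F'=F^e)$ of Lemma \ref{dropdownlem}. Since the square relating $F_X$, $F_Y$ and $\pi$ commutes on the nose (because $\pi^{\#}$ is a ring map, hence commutes with $q$-th powers), there is a canonical identification $\pi^*F_X^{e*}(-)\cong F_Y^{e*}\pi^*(-)$; moreover $(\pi^*\mM,\pi^*F)$ is again a unit $F$-module on $Y$, with $F^e$-structure map $\pi^*(\theta^{-1})$ (an isomorphism since $\pi^*$ preserves isomorphisms, using Frobenius base change to identify its source). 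Because $\pi$ is flat, $\pi^*$ is exact and therefore carries the image of a map of quasi-coherent sheaves to the image of its pullback. Applying $\pi^*$ to the composite above thus identifies $\pi^*(\mN^{[q^e]})$ with the image of $F_Y^{e*}(\pi^*\mN)\to\pi^*\mM$, which is $(\pi^*\mN)^{[q^e]}$ by Definition \ref{fracdf} over $Y$.

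For part (2), I would use the description of Remark \ref{fracrem}: under the Morita identification, $\mN^{[\frac{1}{q^e}]}\cong\mHom_{\bD^e_X}(\mO_X,\bD^e_X\mN)$, where $\bD^e_X\mN\subseteq\mM$ is the $\bD^e_X$-submodule generated by $\mN$. First I would check $\pi^*(\bD^e_X\mN)=\bD^e_Y(\pi^*\mN)$ inside $\pi^*\mM$: write $\bD^e_X\mN$ as the image of the action map $\bD^e_X\otimes_{\mO_X}\mN\to\mM$, pull back along the flat $\pi$ (which is exact and commutes with $\otimes_{\mO_X}$), and invoke $\pi^*\bD^e_X=\bD^e_Y$ from Proposition \ref{moritaetale}(1) together with the compatibility of the $\bD_X$-action on a unit $F$-module with pullback; the pulled-back map is the action map $\bD^e_Y\otimes_{\mO_Y}\pi^*\mN\to\pi^*\mM$, whose image is $\bD^e_Y(\pi^*\mN)$. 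Next I would apply Proposition \ref{moritaetale}(2) to the quasi-coherent $\bD^e_X$-module $\bD^e_X\mN$ to get $\pi^*\mHom_{\bD^e_X}(\mO_X,\bD^e_X\mN)\cong\mHom_{\bD^e_Y}(\mO_Y,\pi^*(\bD^e_X\mN))$. Combining the two steps with Remark \ref{fracrem} over $Y$ yields $\pi^*(\mN^{[\frac{1}{q^e}]})\cong\mHom_{\bD^e_Y}(\mO_Y,\bD^e_Y(\pi^*\mN))\cong(\pi^*\mN)^{[\frac{1}{q^e}]}$.

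As a sanity check, one inclusion of (2) holds for any flat $\pi$ directly from (1): since $(\mN^{[\frac{1}{q^e}]})^{[q^e]}\supseteq\mN$ and $\pi^*$ is exact, part (1) gives $(\pi^*(\mN^{[\frac{1}{q^e}]}))^{[q^e]}\supseteq\pi^*\mN$, so the minimality in Definition \ref{fracdf} forces $(\pi^*\mN)^{[\frac{1}{q^e}]}\subseteq\pi^*(\mN^{[\frac{1}{q^e}]})$; the genuine content of (2) is the reverse inclusion, and this is precisely where \'etaleness is needed, entering only through Proposition \ref{moritaetale}, which fails for a merely flat morphism. I expect the one point requiring care in the writeup to be the assertion used in the first step of part (2), namely that the $\bD_X$-module structure on a unit $F$-module — built via the Morita equivalence from the structure maps of Definition-Construction \ref{limconst} — is compatible with \'etale (indeed flat) pullback; this is formal from Frobenius base change and the construction, but it should be isolated and stated before it is invoked.
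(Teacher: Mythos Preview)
Your proposal is correct and follows essentially the same route as the paper: part (1) via exactness of flat pullback together with Frobenius base change, and part (2) via the Morita description of Remark~\ref{fracrem} combined with Proposition~\ref{moritaetale}. Your write-up is considerably more detailed than the paper's (which dispatches part (2) in a single sentence), in particular you spell out the intermediate step $\pi^*(\bD^e_X\mN)=\bD^e_Y(\pi^*\mN)$ and the sanity check on the easy inclusion, but these are exactly the details one would fill in when unpacking the paper's sketch.
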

\begin{proof} By replacing $q$ by $q'=q^e$, it is enough to prove these statements when $e=1$.\\
\begin{enumerate}
\item 
The unit $F$ structure on $\pi^*(\mM)$, 
$$\xi^{-1}_Y: F_Y^*(\pi^*\mM) \rightarrow \pi^*\mM,$$
is defined (locally) by
$$f \otimes g \otimes m \mapsto fg^p \otimes F(m).$$

By definition,
$$\mN^{[q]}=Im(\xi^{-1}_X|(F_X^*\mN) \rightarrow \mM)$$ 
and 
$$(\pi^*\mN)^{[q]} = Im(\xi^{-1}_Y|(F_Y^*\pi^*\mN) \rightarrow \pi^*\mM)$$

There is a natural isomorphism $\eta: \pi^* \circ F_X^* \cong  F_Y^* \circ \pi^*$.  This isomorphism has the property that $\xi^{-1}_Y \eta(\mN)= \pi^* \xi^{-1}_X$.  $\pi^*$ is exact so applying it to the diagram
$$F_X^*\mN \stackrel{\xi^{-1}_X|}{\twoheadrightarrow} \mN^{[q]} \hookrightarrow \mM$$ induces the commutative diagram

$$\begin{xymatrix}{\pi^*F_X^*\mN  \ar@{>>}[d]^{\pi^*(\xi_X^{-1}|)} \ar[r]^{\eta(\mN)}_{\cong}& F_Y^*\pi^*\mN \ar[d]^{\xi_Y^{-1}|} \\ \pi^*(\mN^{[q]}) \ar@{^{(}->}[r]& \pi^*\mM }\end{xymatrix}.$$  

\item Use the definition of $\mN^{[\frac{1}{q}]}$ given in \ref{fracrem} and apply \ref{moritaetale}.

\end{enumerate}
\end{proof}

There is also the following generalization of \cite[2.5]{BMS}.

\begin{thm}\label{basistypethm} Let $l$ be an integer, $(R^{\oplus l}, F_R^{\oplus l})$ the natural unit $F$-module on $R$ of rank $l$, and $N \subset R^{\oplus l}$ a submodule generated by $v_1,...,v_n$.  If $F^e_{R*}R$ is a free $R$-module with $R$-basis $b_1,...,b_c$ $(c = q^{edim(R)})$, $b_{1,1},...,b_{1,l},...,b_{c,l}$ the corresponding basis for $F^e_{R*}R^{\oplus l}$ and $v_i = \sum_{j=0,j'=0}^{c,l} a_{i,j,j'}^{q^e}b_{j,j'}$ then setting $w_{i,j} = \langle a_{i,j,1},...,a_{i,j,l} \rangle$ we have
$$N^{[\frac{1}{q^e}]} = \sum_{i,j}Rw_{i,j}.$$
\end{thm}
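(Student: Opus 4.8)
The plan is to imitate the proof of \cite[2.5]{BMS} in the rank $l$ setting, establishing the two inclusions $N^{[\frac{1}{q^e}]}\subseteq\sum_{i,j}Rw_{i,j}$ and $\sum_{i,j}Rw_{i,j}\subseteq N^{[\frac{1}{q^e}]}$ separately. I would set $N'=\sum_{i,j}Rw_{i,j}$ and, for $v=(c_1,\dots,c_l)\in R^{\oplus l}$, write $v^{[q^e]}=(c_1^{q^e},\dots,c_l^{q^e})$. For the natural unit $F$-module structure on $R^{\oplus l}$ the $e$-fold Frobenius is exactly this coordinatewise $q^e$-power map $v\mapsto v^{[q^e]}$, so by Definition \ref{fracdf} (and additivity of the $q^e$-power map) any submodule $P\subseteq R^{\oplus l}$ generated by $u_1,\dots,u_s$ has $P^{[q^e]}=\sum_s Ru_s^{[q^e]}$. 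The one elementary fact I would isolate at the outset, obtained by unwinding what it means for the $a_{i,j,j'}$ to be the coordinates of $v_i$ in the $R$-basis $\{b_{j,j'}\}$ of $F^e_{R*}R^{\oplus l}$ (recalling that a scalar $r$ acts on $F^e_{R*}(-)$ by multiplication by $r^{q^e}$), is the identity
$$v_i=\sum_j b_j\, w_{i,j}^{[q^e]}\qquad\text{in }R^{\oplus l},$$
which merely records that the $j'$-th coordinate of $v_i$ equals $\sum_j a_{i,j,j'}^{q^e}b_j$.

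For the inclusion $N^{[\frac{1}{q^e}]}\subseteq N'$ I would check $N\subseteq(N')^{[q^e]}$. Since $(N')^{[q^e]}=\sum_{i,j}Rw_{i,j}^{[q^e]}$ is an $R$-module and each $b_j\in R$, the displayed identity shows $v_i\in(N')^{[q^e]}$ for every $i$, hence $N\subseteq(N')^{[q^e]}$, and the minimality clause of Definition \ref{fracdf} gives $N^{[\frac{1}{q^e}]}\subseteq N'$.

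For the reverse inclusion it suffices to prove that every submodule $N''\subseteq R^{\oplus l}$ with $(N'')^{[q^e]}\supseteq N$ already contains all the $w_{i,j}$. I would pick generators $u_1,\dots,u_s$ of $N''$ ($R$ is Noetherian), and from $v_i\in(N'')^{[q^e]}=\sum_k Ru_k^{[q^e]}$ write $v_i=\sum_k r_{i,k}u_k^{[q^e]}$ with $r_{i,k}\in R$, then expand each coefficient in the basis, $r_{i,k}=\sum_j r_{i,k,j}^{q^e}b_j$ with $r_{i,k,j}\in R$. Pulling the $b_j$ to the front and using that the $q^e$-power map on $R$ is additive gives
$$v_i=\sum_j b_j\Big(\sum_k r_{i,k,j}^{q^e}u_k^{[q^e]}\Big)=\sum_j b_j\, s_{i,j}^{[q^e]},\qquad s_{i,j}:=\sum_k r_{i,k,j}u_k\in N''.$$
Comparing this with $v_i=\sum_j b_j w_{i,j}^{[q^e]}$ and invoking uniqueness of coordinates in the $R$-basis $\{b_{j,j'}\}$ of $F^e_{R*}R^{\oplus l}$ forces $s_{i,j}=w_{i,j}$, so $w_{i,j}\in N''$, whence $N'\subseteq N''$.

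Taken together, the first step exhibits $N'$ as a submodule whose $q^e$-th power contains $N$, the second shows $N'$ is contained in every such submodule, and these two statements are precisely the assertion $N^{[\frac{1}{q^e}]}=N'$ (so in particular the minimal module of Definition \ref{fracdf} exists and equals $N'$). The computation is essentially bookkeeping and I do not anticipate a genuine obstacle; the one place needing care is the regrouping step, where freeness of $F^e_{R*}R$ is used to expand the $r_{i,k}$ and where additivity of the $q^e$-power map, together with uniqueness of basis coordinates, is what legitimizes pulling the $b_j$ out front and identifying $s_{i,j}$ with $w_{i,j}$.
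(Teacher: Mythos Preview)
Your proof is correct and follows essentially the same strategy as the paper: both establish the easy inclusion via the identity $v_i=\sum_j b_j\,w_{i,j}^{[q^e]}$, and both prove the reverse inclusion by showing that any $N''$ with $(N'')^{[q^e]}\supseteq N$ must contain each $w_{i,j}$. The only cosmetic difference is that the paper phrases the extraction of $w_{i,j}$ via the dual basis projections $b_j^{\checkmark}$ (noting $(N'')^{[q^e]}$ is closed under them, so $F_R^e(w_{i,j})\in(N'')^{[q^e]}$, then invoking faithful flatness of Frobenius), whereas you expand $v_i$ explicitly in generators of $(N'')^{[q^e]}$, regroup, and invoke uniqueness of coordinates in the free module $F^e_{R*}R^{\oplus l}$; these are the same step in different clothing.
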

\begin{proof}The proof follows by a direct adaptation of the proof of \cite[2.5]{BMS}.  To ease notation, we again note it is enough to prove the case $e=1$.\\
The inclusion $N^{[\frac{1}{q}]} \subset (\{w_{i,j}\})$ is obvious because $v_i = \sum_{j}b_jF_R(w_{i,j}).$\\
The reverse inclusion is a consequence of the following observation.  If $\{b_{j}^{\checkmark}\}$ is the dual basis of $F_{R*}R$ over $R$, then for any $N'$, $N'^{[q]}$ is closed under application of $b_j^{\checkmark}$ (acting along the diagonal).  In particular, if $N'^{[q]} \supset N$ then $b_j^{\checkmark}(v_i) = F_R(w_{i,j}) \in N'^{[q]}.$  As $F_R$ is faithfully flat, $w_{i,j} \in N'$.\\  

\end{proof}

\begin{cor}\label{genbounds} Let $R$ be a polynomial ring, $l$ an integer, and $(R^{\oplus l}, F_R^{\oplus l})$ the natural unit $F$-module on $R$ of rank $l$.  If $N \subset R^{\oplus l}$ can be generated by elements of degree at most $d$, where the degree of a vector is defined to be the maximum of the degree of each entry, then $N^{[\frac{1}{q^e}]}$ can be generated by elements of degree at most $\floor{\frac{d}{q^e}}$.
\end{cor}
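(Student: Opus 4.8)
The plan is to feed an explicit low‑degree generating set of $N$ into Theorem~\ref{basistypethm}, using the monomial basis of $F^e_{R*}R$, and then read the degree bound directly off the resulting generators $w_{i,j}$ of $N^{[\frac{1}{q^e}]}$. If one wishes, the whole argument can first be reduced to $e=1$ by replacing $q$ with $q^e$ exactly as in the proof of Theorem~\ref{basistypethm}, since $\floor{d/q^e}$ is unaffected by that substitution; but it is no harder to argue for general $e$.

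Write $R = \boldk[x_1,\dots,x_m]$. First I would fix the free $R$‑basis of $F^e_{R*}R$ given by the monomials $x^{\alpha}=x_1^{\alpha_1}\cdots x_m^{\alpha_m}$ with $0\le \alpha_k<q^e$ for every $k$. This is a basis because every exponent vector $\beta\in\mathbb{Z}_{\ge 0}^m$ decomposes uniquely as $\beta=q^e\gamma+\alpha$ with $0\le\alpha_k<q^e$, and since $\boldk$ is perfect every coefficient has a unique $q^e$‑th root, so that
$$ f=\sum_{\beta}c_\beta x^\beta=\sum_{\alpha}\Bigl(\sum_{\gamma}c_{q^e\gamma+\alpha}^{\,1/q^e}\,x^{\gamma}\Bigr)^{q^e}x^{\alpha} $$
for every $f=\sum_\beta c_\beta x^\beta\in R$. (The number of such monomials is $q^{em}=q^{e\dim R}$, matching the $c$ of Theorem~\ref{basistypethm}.)

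Now take generators $v_1,\dots,v_n$ of $N$ with $\deg v_i\le d$, i.e.\ each component $(v_i)_{j'}=\sum_{\beta}c^{(i,j')}_\beta x^\beta$ (for $1\le j'\le l$) has $c^{(i,j')}_\beta=0$ whenever $|\beta|>d$, where $|\beta|=\sum_k\beta_k$. In the notation of Theorem~\ref{basistypethm} the coefficient attached to the basis vector $x^{\alpha}$ in slot $j'$ is $a_{i,\alpha,j'}=\sum_{\gamma}\bigl(c^{(i,j')}_{q^e\gamma+\alpha}\bigr)^{1/q^e}x^{\gamma}$, and only those $\gamma$ with $c^{(i,j')}_{q^e\gamma+\alpha}\ne 0$ occur; for such a $\gamma$ one has $q^e|\gamma|\le q^e|\gamma|+|\alpha|=|q^e\gamma+\alpha|\le d$, hence $|\gamma|\le d/q^e$, and as $|\gamma|$ is an integer, $|\gamma|\le\floor{d/q^e}$. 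Therefore $\deg a_{i,\alpha,j'}\le\floor{d/q^e}$ for all $i,\alpha,j'$, so every vector $w_{i,\alpha}=\langle a_{i,\alpha,1},\dots,a_{i,\alpha,l}\rangle$ has degree at most $\floor{d/q^e}$. By Theorem~\ref{basistypethm}, $N^{[\frac{1}{q^e}]}=\sum_{i,\alpha}R\,w_{i,\alpha}$, which exhibits $N^{[\frac{1}{q^e}]}$ as generated in degree $\le\floor{d/q^e}$.

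The only point demanding care — and it is hardly an obstacle — is the bookkeeping: one must use the monomial basis (so that the $q^e$‑th root of the coefficient of $x^\beta$ gets attached to the monomial $x^{\gamma}$ with $\gamma=\bigl(\floor{\beta_k/q^e}\bigr)_k$), after which the bound follows from the elementary inequality $q^e|\gamma|\le|\beta|\le d$; in particular no superadditivity property of the floor function is needed. I would expect the write‑up to be essentially just the three paragraphs above.
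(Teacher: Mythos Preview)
Your proof is correct and is precisely the argument the paper has in mind: the paper's proof reads in its entirety ``Follows from \ref{basistypethm},'' and what you have written is exactly the natural unpacking of that citation using the monomial basis of $F^e_{R*}R$. There is nothing to add.
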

\begin{proof}
Follows from \ref{basistypethm}.

\end{proof}

\subsection{Minimal generators}
Before stating a definition we will need a theorem due to M. Blickle which was first proved in the complete case by G. Lyubeznik.\\

\begin{thm}\cite[2.21]{Bli} For any $\mO_X$-coherent subsheaf $\mN \subset \mM$ the descending chain
$$\mN \supset \mN^{[\frac{1}{q}]} \supset .... \supset \mN^{[\frac{1}{q^e}]} \supset ...$$
stabilizes.  Denote by this stable value $\mN^{[0]}$.\\
\end{thm}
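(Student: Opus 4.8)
The plan is to reduce, using that $\mN^{[1/q^e]}$ is local in nature and compatible with \'etale (and flat) base change, to the case of a finite free unit $F$-module over a polynomial ring, and then to apply the degree bound of Corollary~\ref{genbounds}, which forces every sufficiently late term of the chain to be generated in degree $0$; past that point the chain lies among the subspaces of a fixed finite-dimensional $\boldk$-vector space and must stabilize.

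For the reduction, note first that the conclusion is local on $X$; by Propositions~\ref{etalelocalcor} and \ref{moritaetale}, $\mN^{[1/q^e]}$ commutes with \'etale morphisms, and by the description in Remark~\ref{fracrem} — where $\mHom_{\bD^e_X}(\mO_X,-)$ is the exact Morita equivalence — together with flat base change for $\mHom$ over the module-finite algebra $\bD^e_X$, it also commutes with flat localization; so we may take $X=\operatorname{Spec} R$ with $R$ a polynomial ring over $\boldk$ (after a standard spreading-out argument). Next, if $\pi\colon\widetilde{\mM}\twoheadrightarrow\mM$ is a surjection of unit $F$-modules and $\tilde\mN\subseteq\widetilde{\mM}$ lifts $\mN$, then exactness of the Morita functor gives $\pi\bigl(\tilde\mN^{[1/q^e]}\bigr)=\mN^{[1/q^e]}$ compatibly in $e$, so it is enough to prove stabilization for $\tilde\mN\subseteq\widetilde{\mM}$. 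Taking for $\mM$ the unit $F$-submodule generated by $\mN$ — which is locally $F$-finite, being generated over $\mO_X\{F\}$ by the coherent sheaf $\mN$ — choosing a coherent generating morphism for it, and lifting that morphism through a surjection from a finite free module (as in Definition-Construction~\ref{limconst}), we reduce to $\mM=R^{\oplus l}$ with its standard unit $F$-structure, which is exactly the setting of Theorem~\ref{basistypethm} and Corollary~\ref{genbounds}.

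Now pick $d$ with $\mN$ generated by vectors of degree $\le d$. By Corollary~\ref{genbounds}, $\mN^{[1/q^e]}$ is generated by vectors of degree $\le\floor{\frac{d}{q^{e}}}$, which is $0$ as soon as $q^e>d$; fix such an $e_0$. Then for all $e\ge e_0$ we have $\mN^{[1/q^e]}=R\cdot V_e$ with $V_e:=\bigl(\mN^{[1/q^e]}\bigr)_0\subseteq (R^{\oplus l})_0=\boldk^{\oplus l}$ a $\boldk$-subspace. The chain $\mN\supseteq\mN^{[1/q]}\supseteq\cdots$ is nested (this is part of the statement and is immediate from the defining minimality in Definition~\ref{fracdf}), so taking degree-$0$ parts shows $\{V_e\}_{e\ge e_0}$ is a nested chain of subspaces of the finite-dimensional space $\boldk^{\oplus l}$, which therefore stabilizes; hence $\mN^{[1/q^e]}=R\cdot V_e$ stabilizes, and we denote its stable value $\mN^{[0]}$.

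I expect the substantive part to be the reduction: pushing the essentially-finite-type hypothesis on $R$ down to a polynomial ring so that Corollary~\ref{genbounds} literally applies, and replacing an arbitrary coherent $\mN$ inside an arbitrary locally $F$-finite $\mM$ by a finite free generating morphism. Once that is in place, the degree bound collapses the tail of the chain into the finite-dimensional space $\boldk^{\oplus l}$, where the descending chain condition finishes the argument with no further work.
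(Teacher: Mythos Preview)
The paper does not supply its own proof of this statement; it is quoted from \cite[2.21]{Bli} without argument. So there is no in-paper proof to compare against, and what follows is an assessment of your sketch on its own merits.

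There is a genuine gap in the reduction. After lifting a coherent generating morphism $A\colon M\to F^*_XM$ through a surjection from a free module, what you obtain is a generating morphism $\tilde A\colon R^{\oplus l}\to F^*_XR^{\oplus l}$, and the associated unit $F$-module $\widetilde{\mM}$ is the direct limit $\varinjlim_e F^{e*}_XR^{\oplus l}$ taken along $\tilde A$ --- \emph{not} $R^{\oplus l}$ with its standard unit $F$-structure. The two coincide only when $\tilde A$ is an isomorphism. This matters because $\mN^{[1/q^e]}$ is computed using the $\bD^e_X$-action on the ambient unit $F$-module, and by Proposition~\ref{dactcor} that action on $\widetilde{\mM}$ involves $\tilde A^{e-1}$. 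Concretely, for $\tilde N\subset R^{\oplus l}$ sitting in $\widetilde{\mM}$ via $\mu_0$, the module $\widetilde{\mN}^{[1/q^e]}$ corresponds to $\bigl(\tilde A^{e-1}(\tilde N)\bigr)^{[1/q^e]}$ computed in the \emph{standard} $R^{\oplus l}$; if the entries of $\tilde A$ have degree $\le d_A$, this is generated in degree at most $\bigl\lfloor\bigl(d+d_A\tfrac{q^e-1}{q-1}\bigr)/q^e\bigr\rfloor$, which tends to $\lfloor d_A/(q-1)\rfloor$ rather than to $0$. So your final step, ``generated in degree $\le\lfloor d/q^e\rfloor=0$ for $e\gg0$'', does not go through, and Corollary~\ref{genbounds} is not applicable in the form you invoke it.

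The strategy is repairable: the degree bound above is still \emph{uniform} in $e$, so the descending chain, pulled back to $R^{\oplus l}$, is determined by its intersection with a fixed finite-dimensional graded piece and hence stabilizes --- precisely the mechanism used in the proof of Lemma~\ref{polylem}. But to carry this out you must (i) keep the generating morphism $\tilde A$ in play rather than discard it, (ii) arrange injectivity of $\tilde A$ (equivalently of $\mu_0$) so that the comparison with $R^{\oplus l}$ is faithful, and (iii) replace ``degree $0$'' by ``bounded degree''. As written, the claimed reduction to the standard unit $F$-module on $R^{\oplus l}$ is incorrect.
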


We begin with the definition of a minimal generator.
\begin{df}\cite[3.6]{Lyu} An $\mO_X$-coherent subsheaf $\mN \subset \mM$ is said to be minimal if $\mN$ generates $\mM$ and $\mN=\mN^{[0]}$.
\end{df}

\begin{rem} Less abstractly, if $\mN' \subset \mN$ both generate $\mM$ then it must be the case that $\mN'^{[q^e]} \supset \mN$ for some $e$ so $\mN' \supset \mN^{[\frac{1}{q^e}]}$.  Thus the definition above precisely states that $\mN$ is a minimal generator if none of its proper submodules also generate $\mM$.
\end{rem}

\begin{thm}\cite[2.24]{Bli} Any locally finitely generated unit $F$-module $\mM$ on $X$ admits a unique minimal coherent root morphism $A_{min}: M_{min} \rightarrow F^*_XM_{min}$.
\end{thm}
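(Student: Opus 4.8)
The plan is to realize $M_{min}$ as the stable value of the descending chain of fractional powers of an arbitrary $\mO_X$-coherent generating subsheaf, to prove this value is independent of the chosen generator, to equip it with a root morphism by restricting the structure isomorphism, and to glue the resulting local data. Concretely, work on an open $U$ small enough that $\mM|_U$ admits an $\mO_X$-coherent generating subsheaf $\mN_0$ (possible because $\mM$ is locally finitely generated), and set $M_{min}:=\mN_0^{[0]}$, the stable value of the chain $\mN_0\supseteq\mN_0^{[1/q]}\supseteq\mN_0^{[1/q^2]}\supseteq\cdots$ furnished by the stabilization theorem \cite[2.21]{Bli}. Since $M_{min}=\mN_0^{[1/q^e]}$ for $e\gg 0$ and $\mN_0^{[1/q^e]}\cong\mHom_{\bD^e_X}(\mO_X,\bD^e_X\mN_0)$ by \ref{fracrem}, the Morita equivalence attached to the trivial Azumaya algebra $\bD^e_X$ identifies $M_{min}$ with the Morita image of an $\mO_X$-coherent sheaf, so $M_{min}$ is $\mO_X$-coherent. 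That each $\mN_0^{[1/q^e]}$ still generates $\mM|_U$ follows by induction on $e$ from the tower identity $\mN_0^{[1/q^{e+1}]}=(\mN_0^{[1/q^e]})^{[1/q]}$ (obtained by composing Frobenius powers), the inclusion $(\mL^{[1/q]})^{[q]}\supseteq\mL$ built into the definition \ref{fracdf}, and the monotonicity of $(-)^{[q]}$ (which shows that any unit $F$-submodule of $\mM$ containing $\mL$ also contains $\mL^{[q]}$); in particular $M_{min}$ generates $\mM|_U$.

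Next, the same tower identity together with stabilization gives $M_{min}^{[1/q]}=M_{min}$, hence $M_{min}=M_{min}^{[0]}$, so $M_{min}$ is a minimal generator. Moreover $M_{min}^{[1/q]}=M_{min}$ forces $M_{min}\subseteq M_{min}^{[q]}$ by the defining property of the fractional power, and $M_{min}^{[q]}$ is by \ref{fracdf} the image of $F_X^*M_{min}$ under the structure isomorphism $\theta^{-1}\colon F_X^*\mM\xrightarrow{\ \sim\ }\mM$; therefore $\theta$ restricts to an injection $A_{min}\colon M_{min}\hookrightarrow F_X^*M_{min}$. The unit $F$-module generated by $(M_{min},A_{min})$ is the union $\bigcup_e M_{min}^{[q^e]}$, which is now genuinely increasing (as $M_{min}\subseteq M_{min}^{[q]}$) and hence is a unit $F$-submodule of $\mM$ containing $M_{min}$; it therefore equals $\mM|_U$ because $M_{min}$ generates. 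Thus $(M_{min},A_{min})$ is an $\mO_X$-coherent root morphism for $\mM|_U$.

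For uniqueness --- and for the gluing required when $X$ is not affine --- I would show that $\mN^{[0]}$ is independent of the coherent generating subsheaf $\mN$. Since $\mN^{[0]}$ is itself a coherent generator equal to its own $[0]$, it is enough to compare two coherent generators $\mN,\mN'$ with $\mN=\mN^{[0]}$ and $\mN'=(\mN')^{[0]}$. Then $\mN\subseteq\mN^{[q]}$ as above, so $\mM|_U=\bigcup_e\mN^{[q^e]}$ is an increasing union of coherent sheaves, whence the coherent sheaf $\mN'$ lies inside $\mN^{[q^E]}$ for some $E$; since $\mN$ trivially satisfies $\mN^{[q^E]}\subseteq\mN^{[q^E]}$, the minimality built into the fractional power gives $(\mN^{[q^E]})^{[1/q^E]}\subseteq\mN$, and therefore $\mN'=(\mN')^{[0]}\subseteq(\mN')^{[1/q^E]}\subseteq(\mN^{[q^E]})^{[1/q^E]}\subseteq\mN$; by symmetry $\mN=\mN'$. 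Consequently $M_{min}$ depends only on $\mM|_U$, and since a root morphism is nothing but the restriction of $\theta$ to its image, any minimal $\mO_X$-coherent root morphism on $U$ must coincide with $(M_{min},A_{min})$. Because the operations $(-)^{[1/q^e]}$ commute with restriction to open subsets (\ref{etalelocalcor}), the locally constructed pairs $(M_{min},A_{min})$ agree on overlaps and glue to the asserted global minimal coherent root morphism.

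The step I expect to be the main obstacle is the independence statement above: it is what simultaneously yields uniqueness and legitimizes the gluing, and it relies on combining the finiteness input of the stabilization theorem with a handful of elementary but easily mishandled identities relating $(-)^{[q^e]}$ and $(-)^{[1/q^e]}$. A secondary technical point is the $\mO_X$-coherence of $\mN_0^{[1/q^e]}$, which is cleanest to extract from the $\mHom$-description of \ref{fracrem} rather than to verify by hand.
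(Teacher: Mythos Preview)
The paper does not prove this theorem at all: it is stated with the citation \cite[2.24]{Bli} and no proof follows. So there is no ``paper's own proof'' to compare against; the result is imported wholesale from Blickle's work on minimal $\gamma$-sheaves.

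Your argument is a correct reconstruction of the standard proof and is essentially the one Blickle gives. You use exactly the ingredients the paper has set up for this purpose: the stabilization theorem \cite[2.21]{Bli} for the descending chain $\mN\supseteq\mN^{[1/q]}\supseteq\cdots$, the $\mHom$-description \ref{fracrem}, and the \'etale-local compatibility \ref{etalelocalcor}. The logic of each step is sound, including the independence argument (where the key identity $(\mN^{[q^E]})^{[1/q^E]}\subseteq\mN$ follows, as you say, immediately from minimality). Two minor simplifications: coherence of $\mN_0^{[1/q^e]}$ is automatic once you accept the chain is descending, since it is then a subsheaf of the coherent $\mN_0$ on a Noetherian scheme; and in the uniqueness step, $\mN'=(\mN')^{[0]}$ actually forces $(\mN')^{[1/q^E]}=\mN'$ (equality, not just containment), since a descending chain whose stable value equals its first term is constant.
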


\subsection{$\bD^e_X$-submodules of unit $F$-modules}

The natural Morita equivalences give an explicit description of the submodule structure.  We put the statement in a format similar to \cite[2.2]{BMS1} for submodules of $R$ equipped with its canonical unit $F$-structure.

\begin{thm}\label{structurethm} If $(\mM,F)$ is a unit $F$-module then the $\bD^{e}_X$-submodules of $\mM$ are precisely those of the form $\mN^{[q^e]}$ for $\mN$ a (possibly not unit $F$) $\mO_X$-submodule of $\mM$.  In particular, the $\bD^e_X$-module generated by $\mN$ is ${\mN^{[\frac{1}{q^e}]}}^{[q^e]}$.
\end{thm}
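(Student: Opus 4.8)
The plan is to reduce the statement to the earlier Morita-equivalence description of $\bD^e_X$-modules and the characterization of $\mN^{[\frac{1}{q^e}]}$ in Remark \ref{fracrem}. First I would recall that, by replacing $q$ with $q' = q^e$ (as in \ref{dropdownlem}), it suffices to treat the case $e = 1$; this is the same normalization used in \ref{etalelocalcor} and \ref{basistypethm}. So fix $e=1$ and write $\bD' = \bD^e_X = \mEnd_{q^e}(\mO_X) \cong \mEnd_{\mO_X}(F^e_{X*}\mO_X)$, which is a trivial Azumaya algebra; under its Morita equivalence with $\mO_X$, a left $\bD'$-module corresponds to a quasi-coherent $\mO_X$-module, and the functor realizing this equivalence on submodules of $\mM$ is $\mN \mapsto \mHom_{\bD'}(\mO_X, \mN)$ on one side and, going back, $\mO_X$-module $L \mapsto \bD' \cdot (\text{image of } L)$ inside the relevant $F^{e*}_X$-pullback.

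The first main step is to show that every $\bD^e_X$-submodule of $\mM$ has the form $\mN^{[q^e]}$. Given a $\bD^e_X$-submodule $\mP \subset \mM$, I would use the unit $F$-structure isomorphism $\xi^{-1}: F^{e*}_X\mM \to \mM$ of $(\mM, F^e)$: the preimage $\xi(\mP) \subset F^{e*}_X\mM$ is a $\bD^e_X$-submodule of $F^{e*}_X\mM$, and since on $F^{e*}_X\mM$ the $\bD^e_X$-action is the natural "act on the left tensor factor" action, the Morita equivalence says $\xi(\mP) = F^{e*}_X \mN$ for the $\mO_X$-submodule $\mN := \mHom_{\bD^e_X}(\mO_X, \mP) \subset \mM$ (concretely, $\mN$ is the set of $m \in \mM$ with $\bD^e_X \otimes m$ landing in $\xi(\mP)$; one checks $\mN \subset \mM$ using that $\mO_X \hookrightarrow \bD^e_X$ splits). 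Then by Definition \ref{fracdf}, $\mN^{[q^e]} = \mathrm{Im}(F^{e*}_X\mN \to \mM) = \xi^{-1}(\xi(\mP)) = \mP$. Conversely, any $\mN^{[q^e]}$ is visibly $\bD^e_X$-stable because it is the image of the $\bD^e_X$-equivariant map $\xi^{-1}|: F^{e*}_X\mN \to \mM$ whose source carries the natural $\bD^e_X$-action and whose image is therefore a $\bD^e_X$-submodule.

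The second step is the "in particular": the $\bD^e_X$-module generated by a given $\mO_X$-submodule $\mN$ is the smallest $\bD^e_X$-submodule containing $\mN$, i.e. (by the first step) the smallest module of the form $\mN'^{[q^e]}$ with $\mN'^{[q^e]} \supset \mN$; by the definition of $\mN^{[\frac{1}{q^e}]}$ as the minimal $\mN'$ with $\mN'^{[q^e]} \supset \mN$, this is exactly $(\mN^{[\frac{1}{q^e}]})^{[q^e]}$. One subtlety here is that one must know such a minimal $\mN'$ exists and that $(\,\cdot\,)^{[q^e]}$ is monotone, so that the map $\mN' \mapsto \mN'^{[q^e]}$ is order-preserving and taking the minimal source gives the minimal target; monotonicity of $(\,\cdot\,)^{[q^e]}$ is immediate from Definition \ref{fracdf} (it is the image of a pullback, and pullback preserves inclusions of submodules after taking images), and existence of the minimal $\mN'$ is part of Definition \ref{fracdf} together with coherence of $\mN$.

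The step I expect to be the main obstacle is the precise identification, in the first step, of the Morita-transform of a $\bD^e_X$-submodule of $F^{e*}_X\mM$ with a pulled-back $\mO_X$-submodule — i.e. verifying that every $\bD^e_X$-stable subsheaf of $F^{e*}_X\mM$ is of the form $F^{e*}_X\mN$ and that the assignment $\mP \mapsto \mN$ agrees with $\mHom_{\bD^e_X}(\mO_X, -)$. This is the heart of the Morita equivalence for the trivial Azumaya algebra $\mEnd_{\mO_X}(F^e_{X*}\mO_X)$ applied to subobjects, and while it is "formal," it requires being careful that $\mM$ itself need not be a unit $F$-module localized in a convenient way, so one should argue sheaf-locally where $F^e_{X*}\mO_X$ is free (using $X$ smooth and $F$-finite) and then check the local identifications glue — which they do because $\mHom_{\bD^e_X}(\mO_X, -)$ is canonical. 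Everything else is bookkeeping with the definitions already set up in Sections 2.3 and 2.4.
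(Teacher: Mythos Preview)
Your proposal is correct and follows essentially the same route as the paper: transport the $\bD^e_X$-structure through the structure isomorphism $\xi^{-1}: F^{e*}_X\mM \to \mM$, invoke the Morita equivalence for the trivial Azumaya algebra $\mEnd_{\mO_X}(F^e_{X*}\mO_X)$ to identify $\bD^e_X$-submodules of $F^{e*}_X\mM$ with pullbacks $F^{e*}_X\mN$, and read off the result. The paper's proof is a one-sentence sketch of exactly this; your write-up simply unpacks the Morita step and the ``in particular'' clause more carefully (one small correction: existence of the minimal $\mN'$ in Definition~\ref{fracdf} does not require coherence of $\mN$---it follows from the identification in Remark~\ref{fracrem}, which gives an explicit formula $\mN^{[1/q^e]} = \mHom_{\bD^e_X}(\mO_X,\bD^e_X\mN)$).
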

\begin{proof}The $\bD^e_X$-structure of $\mM$ is given by the $\bD^e_R$-structure of $F^{e*}\mM$ after identification through the $e^{th}$ structure isomorphism.  By Morita equivalence, the $\bD^e_X$-submodules of $F^{e*}_X\mM$ are precisely those of the form $F^{e*}_X\mN$ for $\mN \subset \mM$ an $\mO_X$-submodule.  Applying the $e^{th}$ structure isomorphism to $\mN$ gives the result.  Alternatively, one can modify the proof of \cite[2.2]{BMS1} to obtain this result by direct computation.\\
\end{proof}

\subsection{Test ideals and $F$-jumping exponents.}

Generalized test ideals were introduced by \cite{HY} as the characteristic $p$ analogues of the multiplier ideals.  In what follows, we will work only with the affine hypersurface case and follow the presentation of \cite{BMS}.  We refer the interested reader to this source for a detailed analysis.
For this subsection, all constructions are considered for $R$ equipped with its natural unit $F$-structure.\\

\begin{df} For $f \in R$ and $\lambda \in \mathbb{R}_+$, notice that $(f^{\ceiling{\alpha q^{e}}})^{[\frac{1}{q^e}]} \subset (f^{\ceiling{\alpha q^{e+1}}})^{[\frac{1}{q^{e+1}}]}$  and define 
$$\tau(f^{\alpha}) = \cup_e (f^{\ceiling{\alpha q^{e}}})^{[\frac{1}{q^e}]}.$$\\
\end{df}

For $\alpha_1 \leq \alpha_2$ we have $\tau(f^{\alpha_1}) \supset \tau(f^{\alpha_2})$.   \cite[2.16]{BMS} shows that for every $\alpha$ there exists $\epsilon >0$ such that $\tau(f^{\alpha})=\tau(f^{\alpha+\epsilon})$. 

\begin{df}\cite[2.17]{BMS} $\alpha \in \mathbb{R}_+$ is said to be an $F$-jumping exponent for $f$ if $\tau(f^{\alpha}) \neq \tau(f^{\alpha-\epsilon})$ for all $\epsilon > 0$.  $0$ is a jumping exponent by convention.
\end{df}

\begin{rem} $\tau(f^{\alpha+1})=f\tau(f^{\alpha})$ so $\alpha$ is a jumping exponent if and only if $\alpha+1$ is a jumping exponent.
\end{rem}

The following important result will be generalized in this paper.

\begin{thm}\cite[1.1]{BMS1} The jumping exponents of $F$ are discrete in $\mathbb{R}$ and are rational.
\end{thm}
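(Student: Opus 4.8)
The plan is to reproduce the argument of \cite{BMS1}, extracting the finiteness of the relevant test ideals from the degree bound of Corollary~\ref{genbounds}. Fix a nonzero $f\in R$. Since $\tau(f^{\alpha})$ is compatible with localization --- and, by Proposition~\ref{etalelocalcor}, with \'etale base change --- I would first reduce to the case $R=\boldk[x_{1},\dots,x_{n}]$, a polynomial ring, for which Corollary~\ref{genbounds} is available; the reduction from the general essentially-finite-type setting (Noether normalization, compatibility of test ideals with module-finite extensions) I would dispatch at the outset and not discuss further. Using the identity $\tau(f^{\alpha+1})=f\,\tau(f^{\alpha})$, which makes $\alpha$ a jumping exponent precisely when $\alpha+1$ is, it then suffices to establish two facts about the set $J$ of $F$-jumping exponents of $f$: (i) $J\cap(0,1]$ is finite, and (ii) each element of $J\cap(0,1]$ is rational. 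Granting (i), periodicity forces $J$ to be discrete in $\mathbb{R}$; granting (ii), periodicity forces $J\subset\mathbb{Q}$.

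For (i), suppose $J\cap(0,1]$ is infinite. Being bounded it has an accumulation point, hence contains a strictly monotone sequence $\lambda_{1},\lambda_{2},\dots$ of jumping exponents lying in $(0,1]$. If this sequence is strictly decreasing, applying the definition of a jumping exponent to $\lambda_{j}$ with $\epsilon=\lambda_{j}-\lambda_{j+1}>0$ gives $\tau(f^{\lambda_{j}})\subsetneq\tau(f^{\lambda_{j+1}})$, so that $\tau(f^{\lambda_{1}})\subsetneq\tau(f^{\lambda_{2}})\subsetneq\cdots$ is a strictly increasing chain of ideals in the Noetherian ring $R$ --- impossible. If the sequence is strictly increasing, the same input produces instead a strictly decreasing chain $\tau(f^{\lambda_{1}})\supsetneq\tau(f^{\lambda_{2}})\supsetneq\cdots$, and Noetherianity alone no longer suffices; this is where Corollary~\ref{genbounds} enters. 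Because the increasing union defining $\tau(f^{\lambda})$ stabilizes, $\tau(f^{\lambda})=(f^{\ceiling{\lambda q^{e}}})^{[\frac{1}{q^{e}}]}$ for all large $e$, and since $\ceiling{\lambda q^{e}}\le q^{e}$ whenever $\lambda\le1$, Corollary~\ref{genbounds} shows that $\tau(f^{\lambda})$ is generated in degree at most $\deg f$ for every $\lambda\in(0,1]$. An ideal generated in degree $\le\deg f$ is determined by its intersection with the finite-dimensional space $V$ of polynomials of degree $\le\deg f$, so our strictly decreasing chain of ideals would yield a strictly decreasing chain of subspaces of $V$ --- again impossible. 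Hence $J\cap(0,1]$ is finite.

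For (ii), note first that directly from the definition $\tau(f^{q\alpha})^{[\frac{1}{q}]}=\tau(f^{\alpha})$ for every $\alpha\ge0$, since applying $[\tfrac{1}{q}]$ merely reindexes the union defining the left-hand side. Now let $\lambda\in J$ and $\delta>0$. Then $\tau(f^{q\lambda})^{[\frac{1}{q}]}=\tau(f^{\lambda})\subsetneq\tau(f^{\lambda-\delta/q})=\tau(f^{q\lambda-\delta})^{[\frac{1}{q}]}$, the strict inclusion in the middle being the jumping-exponent property of $\lambda$; since $\tau(f^{q\lambda})\subseteq\tau(f^{q\lambda-\delta})$ and an equality here would survive application of $[\tfrac{1}{q}]$, we conclude $\tau(f^{q\lambda})\subsetneq\tau(f^{q\lambda-\delta})$. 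Thus $q\lambda\in J$, and with periodicity this makes $J\cap(0,1]$ stable under the self-map $\phi$ that sends $\lambda$ to the unique representative of $q\lambda$ in $(0,1]$. Since $J\cap(0,1]$ is finite by (i), every $\phi$-orbit is eventually periodic; writing $\phi^{m}(\lambda)=q^{m}\lambda-c_{m}$ with $c_{m}\in\mathbb{Z}_{\ge0}$, an identity $\phi^{s+N}(\lambda)=\phi^{s}(\lambda)$ yields $q^{s}(q^{N}-1)\lambda\in\mathbb{Z}$, whence $\lambda\in\mathbb{Q}$; one even obtains a common denominator bounded in terms of $|J\cap(0,1]|$. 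This establishes (ii), and with it the theorem.

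The main obstacle is the strictly-increasing case of step (i). A strictly decreasing chain of ideals is perfectly consistent with the Noetherian hypothesis, so something genuinely new is needed, and that is precisely Corollary~\ref{genbounds} (resting in turn on Theorem~\ref{basistypethm}): it confines all the test ideals $\tau(f^{\lambda})$, $\lambda\in(0,1]$, to a single finite-dimensional vector space, where descending chains must terminate. The only other delicate point is the opening reduction to a polynomial ring in the general essentially-finite-type case, which relies on the compatibility of $\tau(f^{\alpha})$ with localization and, through Proposition~\ref{etalelocalcor}, with \'etale maps.
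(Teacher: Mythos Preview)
The paper does not give its own proof of this statement; it is quoted as background from \cite{BMS1}. Your argument is correct and is essentially the original proof from \cite{BMS} and \cite{BMS1}: bound the degrees of generators of $\tau(f^{\lambda})$ via Corollary~\ref{genbounds} to rule out infinite strictly descending chains of test ideals on $(0,1]$, and use stability of the set of jumping exponents under $\lambda\mapsto q\lambda$ together with the periodicity $\tau(f^{\alpha+1})=f\,\tau(f^{\alpha})$ to force rationality via eventual periodicity of orbits in a finite set.

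One small quibble on the reduction step: getting from a general regular $F$-finite $\boldk$-algebra of essentially finite type down to a polynomial ring is not literally via ``Noether normalization and compatibility with module-finite extensions''---test ideals do not behave simply under arbitrary finite maps. The route actually taken (both in \cite{BMS1} and, for the generalizations in this paper, in the proof of Theorem~\ref{sltimainthm}) is to pass \'etale-locally to a standard \'etale neighborhood of affine space, and then use a quotient argument as in Proposition~\ref{quotpro} to descend to the polynomial ring itself. Your invocation of Proposition~\ref{etalelocalcor} is the right first move, but the reduction needs that additional step rather than Noether normalization.
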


\begin{rem} \label{tirem} A simple compactness argument shows that if $\alpha_1 < \alpha_2$ are two $F$-jumping exponents with no other $F$-jumping exponents in $(\alpha_1,\alpha_2)$ then for any $\alpha_1 \leq \alpha < \alpha_2$ we have
\end{rem}
\section{List Test Modules}

We will now discuss a generalization of the test ideals $\tau(f^\alpha)$ which will be used in the next section to prove the existence of $b$-functions.  The most relevant result of \cite{BMS} to be generalized in this section is that $F$-jumping exponents are discrete and rational.  Unlike test ideals, which are defined by two parameters $\lambda$ and $e$ with $\lambda$ fixed and $e$ variable but tending towards $\infty$, list test modules will exhibit their most interesting behavior by fixing $e$ and allowing $\lambda$ to vary.  This slight modification of quantifiers requires the analogous definition of $F$-jumping exponents to be more analytic in nature.  Example \ref{testidealexa} will detail the relationship between list test modules and generalized test ideals $\tau(f^\alpha)$ and includes an analysis of the relationship between the jumping numbers and $F$-jumping exponents.  The main result of this section is that the jumping numbers associated to list test modules are discrete and rational.\\

List test modules will be determined by a list of matrices.  In order to assist the reader, in the first subsection a thorough investigation of list test modules is done when the matrices in the list are $1 \times 1$ and the list is of length less than $q$.  In this case, they will actually determine an ideal in $R$ and hence are called ``simple list test ideals''.  The generalized test ideals $\tau(f^{\alpha})$ are examples of simple list test ideals.  The second subsection contains the generalization of the important statements from simple list test ideals to the list test modules.  The proofs of the general case are omitted because they follow by the nearly identical arguments.\\

\subsection{Simple list test ideals}
To provide appropriate motivation for the definition of list test modules and their jumping numbers, we will first analyze a special case and its relation to test ideals and $F$-jumping exponents.

\begin{nota} We make a small modification of classical notion.  Let $\{S_e\}$ a union of finite subsets of $\mathbb{R}$ indexed by $\mathbb{N}$.  We say that $s$ is an accumulation point of $\{S_e\}$ if it is an element of $\cap_{e'} \overline{\cup_{e \geq e'} S_e}$.  That is, the accumulation points of $\{S_e\}$ are the usual accumulation points of $\cup_e S_e$ but we also allow limit points of constant sequences as long as the constant is contained in $S_e$ for infinitely many $e$.
\end{nota}

\begin{df} Let $r_0,...,r_{q-1} \in R$ be a list of length $q$.  For each $\lambda \in (0,1]$ and $e \geq 0$ define,

$$I(r_0,...,r_{q-1},\lambda,e) = (r_{i_0}r_{i_1}^q...r_{i_{e}}^{q^{e}})^{[\frac{1}{q^{e+1}}]}$$

where $\ceiling{\lambda q^{e+1}}-1 = i_0 + i_1q+...+i_{e}q^e$ is the unique base $q$ expansion of $\ceiling{\lambda q^{e+1}}-1$. \\

The $e^{th}$ \textbf{simple list test ideal} is,

$$\tau(r_0,...,r_{q-1},\lambda,e) = \sum_{\lambda' \leq \lambda} I(r_0,...,r_{q-1},\lambda',e).$$

This definition can be extended to all $\lambda \in \mathbb{R}$ by setting $$\tau(r_0,...,r_{q-1}, \lambda, e) = \tau(r_0,...,r_{q-1}, \lambda_0, e)$$ where $\lambda_0$ is the unique representative in $(0,1]$ of the class $\overline{\lambda} \in \mathbb{R}/\mathbb{Z}$.
\end{df}

\begin{rem} For fixed $\lambda$, the ideals $I(r_0,...,r_{q-1},\lambda,e)$ decrease as $e$ increases.  This implies that for fixed $\lambda$, the simple list test ideals also decrease as $e$ increases.  For fixed $e$, the simple list test ideals are constructed to increase as $\lambda$ increases.  There is also lower semi-continuity; for every $e$ and every $\lambda$ there exists $\lambda' < \lambda$ such that $\tau(r_0,...,r_{q-1},\lambda',e)=\tau(r_0,...,r_{q-1},\lambda,e)$.
\end{rem}

\begin{df} For each $e$, set 
$$S_e = \{ \lambda \in (0,1) | \tau(r_0,...,r_{q-1},\lambda',e) \neq \tau(r_0,...,r_{q-1},\lambda,e) \forall 1 \geq \lambda' > \lambda \}.$$  It is clear that the set $S_e$ is contained in the set $\{\frac{m}{q^{e+1}} | 0 < m < q^{e+1}\}$.\\

We define a \textbf{jumping number} for the list $r_0,...,r_{q-1}$ to be a \underline{non-zero} accumulation point of $\{S_e\}$.  A jumping number is a real number of the form $\lambda = \lim_{n \rightarrow \infty}\lambda_{e_n}$ for some sequence $\lambda_{e_n} \in S_{e_n}$ and $e_n$ a strictly increasing sequence of integers.  In the extended setting, we say $\lambda$ is a jumping number if and only if $\lambda$ is the integer translate of a jumping number.
\end{df}

\begin{rem} It will be shown in the proofs of \ref{polylem} and \ref{sltimainthm} that when $R$ is of essentially finite type over $\boldk$, $0$ is never an accumulation point of $S$.  As this is the only case that is of interest to this article, it is most convenient to exclude $0$ from the general definition.
\end{rem}
The following theorem will demonstrate that the generalized test ideals $\tau(f^{\alpha})$ for $f \in R$ are a special case of list test modules.  It will also discuss how to recover $F$-jumping exponents from the jumping numbers of the associated list test ideals.\\

\begin{thm}\label{testidealexa}
If $r_k = f^{q-1-k}$ then for all $\lambda \in (0,1]$
\begin{enumerate}
\item $I(r_0,...,r_{q-1}, \lambda, e) = \tau(r_0,...,r_{q-1}, \lambda, e)$
\item There exists $E$ such that for all $e \geq E$ and  for all $\alpha \in (0,1)$ there exist real numbers $\lambda_{\alpha,e} \in (0,1)$ with
$$\tau(r_0,...,r_{q-1}, \lambda_{\alpha,e}, e) = \tau(f^{\alpha}).$$
The numbers $\lambda_{\alpha,e}$ can be chosen so that $\alpha_1 \geq \alpha_2$ implies $\lambda_{\alpha_1,e} \leq \lambda_{\alpha_2,e}$.
\item The jumping numbers of the simple list test ideal in $(0,1)$ are of the form
$$\lambda = 1 - \alpha$$ where
$\alpha$ is an $F$-jumping exponent for $f$ contained in $(0,1)$.

\end{enumerate}
\begin{proof}\

\begin{enumerate}
\item By direct computation we have
$$I(r_0,...,r_{q-1}, \lambda, e) = (r_{i_0}r_{i_1}^q...r_{i_e}^{q^e})^{[\frac{1}{q^{e+1}}]}=(f^{\sum_{a}(q-1-i_a)q^a})^{[\frac{1}{q^{e+1}}]}=(f^{q^{e+1}-\ceiling{\lambda q^{e+1}}})^{[\frac{1}{q^{e+1}}]}.$$

This clearly implies that if $\lambda' \leq \lambda$ then $q^{e+1}-\ceiling{\lambda' q^{e+1}} \geq q^{e+1}- \ceiling{\lambda q^{e+1}}$ and
$$I(r_0,...,r_{q-1},\lambda',e) =(f^{q^{e+1}-\ceiling{\lambda' q^{e+1}}})^{[\frac{1}{q^{e+1}}]} \subset (f^{q^{e+1}-\ceiling{\lambda q^{e+1}}})^{[\frac{1}{q^{e+1}}]} = I(r_0,...,r_{q-1},\lambda,e).$$ 
We conclude that
$$\tau(r_0,...,r_{q-1}, \lambda, e)= \sum_{\lambda' \leq \lambda} I(r_0,...,r_{q-1},\lambda',e) =I(r_0,...,r_{q-1}, \lambda, e).$$
\item By \ref{tirem}, it is enough to prove this only when $\alpha$ is an $F$-jumping exponents in $(0,1)$.  $R$ is Noetherian so for any $F$-jumping exponent $\alpha \in (0,1)$ there exists $E_{\alpha}$ such that  
$$\tau(f^{\alpha}) = (f^{\ceiling{\alpha q^{e+1}}})^{[\frac{1}{q^{e+1}}]}$$
for all $e \geq E_{\alpha}$.  The set of $F$-jumping exponents form a discrete set in $\mathbb{R}_+$ so there are only a finite number of $F$-jumping exponents in $(0,1)$.  Therefore it is possible to fix an integer $E$ larger than all $E_{\alpha}$.\\

Fix an $F$-jumping exponent $\alpha \in (0,1)$ and $e \geq E$.  Let $\ceiling{\alpha q^{e+1}}-1 = i_0 + i_1q + ... + i_eq^e$ denote the base $q$-expansion of  $\ceiling{\alpha q^{e+1}}-1$.  Define $\lambda_{\alpha,e} = \sum_{k=0}^e (q-1-i_k)q^{k-e-1}$.  Using $1$ and that $e \geq E_{\alpha}$, we have
$$\tau(r_0,...,r_{q-1},\lambda_{\alpha,e},e) = (f^{q^{e+1}-\ceiling{\lambda q^{e+1}}})^{[\frac{1}{q^{e+1}}]}= (f^{\ceiling{\alpha q^{e+1}}})^{[\frac{1}{q^{e+1}}]}=\tau(f^{\alpha}).$$

\item Part 1: First we will show that if $\lambda \in (0,1)$ is a jumping number for the simple list test ideal than $\lambda$ is of the form $1-\alpha$ for $\alpha \in (0,1)$ an $F$-jumping exponent of $f$.  Write $\lambda=\lim_{e \rightarrow \infty}\lambda_{e_n}$ where $\lambda_{e_n}=\frac{m_n}{q^{e_n+1}} \in S_{e_n}$.  We want to show that $\alpha=1-\lambda$ is an $F$-jumping exponent for $f$.\\

Case 1: $1-\frac{m_n}{q^{e_n+1}} \geq \alpha$ for all $n \gg 0$.\\
By \cite[2.14]{BMS} for very large $n$, we have
$$\tau(f^{\alpha}) = (f^{m_n})^{[\frac{1}{q^{e_n+1}}]}.$$
Yet $\frac{m_n}{q^{e_n+1}} \in S_{e_n}$ so
$$\tau(f^{\alpha}) =(f^{m_n})^{[\frac{1}{q^{e_n+1}}]} = \tau(r_0,...,r_{q-1},1-\frac{m_n}{q^{e_n+1}},e_n) \subsetneq \tau(r_0,...,r_{q-1},1-\frac{m_n-1}{q^{e_n+1}},e_n)$$
for all $n \gg 0$.
We claim that $ \frac{m_n-1}{q^{e_n+1}} < \alpha$ for all $n \gg 0$; for if not by \cite[2.14]{BMS}, 
$$\tau(f^{\alpha}) = \tau(f^{\frac{m_{n_k}-1}{q^{e_{n_k}+1}}}) =  \tau(r_0,...,r_{q-1},1-\frac{m_{n_k}-1}{q^{e_{n_k}+1}},e_{n_k})$$ for some subsequence $n_k$ which would give a contradiction.  The sequence $\frac{m_n-1}{q^{e_n+1}}$ converges to $\alpha$ which implies that given $\alpha' < \alpha$ and taking $n$ large enough that $\tau(f^{\alpha'})=(f^{\ceiling{\alpha q^{e_n+1}}})^{[\frac{1}{q^{e_n}}]}$ and  $\frac{m_n-1}{q^{e_n+1}} > \alpha'$ we find that
$$\tau(f^{\alpha}) \subsetneq \tau(r_0,...,r_{q-1},1-\frac{m_n-1}{q^{e_n+1}},e_n) \subset \tau(f^{\alpha'}).$$  Therefore $\alpha$ is an $F$-jumping exponent.

Case 2: \footnote{We will see in the proofs of \ref{polylem} and \ref{sltimainthm} that the second case is actually null if $R$ is of essentially finite type over $\boldk$.  In this situation, the proof of the first case shows for very large $e$, elements of $S_e$ are always less than $\frac{1}{q^{e+1}}$ away from a jumping number.}$\frac{m_n}{q^{e_n+1}} < \alpha$ infinitely often.\\
We may pass to a subsequence and assume for all $n$ the inequality holds.\\
Fix $\alpha' < \alpha$ then chosen $n$ large enough that 
$$\tau(f^{\alpha})=(f^{\ceiling{\alpha q^{e_n+1}}})^{[\frac{1}{q^{e_n+1}}]},$$
$$ \tau(f^{\alpha'})=(f^{\ceiling{\alpha' q^{e_n+1}}})^{[\frac{1}{q^{e_n+1}}]},$$ and $\alpha' < \frac{m_n-1}{q^{e_n+1}}$. \\

Similar to the case 1 above, direct computation yields
$$\tau(f^{\alpha'}) \supset (f^{m_n-1})^{[\frac{1}{q^{e_n+1}}]} \supsetneq (f^{m_n})^{[\frac{1}{q^{e_n+1}}]} \supset \tau(f^{\alpha}).$$
 Therefore $\alpha$ is an $F$-jumping exponent.\\

Part 2: We will now show that every $F$-jumping exponent of $f$ occurs as $1-\lambda$ where $\lambda$ is a jumping number.\\

Suppose $\alpha$ is an $F$-jumping exponent.  We want to show that $\lambda=1-\alpha$ is a jumping number.  For each $n$, there is a number $\alpha'$ with $\alpha - \frac{1}{n} < \alpha' < \alpha$ and $\tau(f^{\alpha}) \neq \tau(f^{\alpha'})$. Inductively, we may choose $e_n \geq e_{n-1} \gg 0$ such that,
$$\tau(f^{\alpha}) = (f^{\ceiling{\alpha q^{e_n+1}}})^{[\frac{1}{q^{e_n+1}}]} \subsetneq (f^{\ceiling{\alpha' q^{e_n+1}}})^{[\frac{1}{q^{e_n+1}}]} = \tau(f^{\alpha'}).$$ In particular, there exists an integer $m_n$ with 
$$\ceiling{\alpha q^{e_n+1}} \geq m_n > \ceiling{(\alpha-\frac{1}{n}) q^{e_n+1}}$$ such that
$$\tau(r_0,...,r_{q-1}, 1 -\frac{m_n}{q^{e_n+1}},e_n)= (f^{m_n})^{[\frac{1}{q^{e_n+1}}]}  \subsetneq (f^{m_n-1})^{[\frac{1}{q^{e_n+1}}]} =\tau(r_0,...,r_{q-1}, 1 -\frac{m_n-1}{q^{e_n+1}},e_n).$$  Thus $1 - \frac{m_n}{q^e_n+1} \in S_{e_n}$ and the sequence $1- \frac{m_n}{q^{e_n+1}}$ converges to $\lambda$.  Thus $\lambda$ is a jumping number.
\end{enumerate}
\end{proof}
\end{thm}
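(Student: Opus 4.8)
The whole argument rests on one computation: with $r_k = f^{q-1-k}$, substituting into the product defining $I(r_0,\dots,r_{q-1},\lambda,e)$ gives $r_{i_0}r_{i_1}^{q}\cdots r_{i_e}^{q^e} = f^{\sum_{a=0}^{e}(q-1-i_a)q^a}$, and since $\sum_{a}(q-1-i_a)q^a = (q^{e+1}-1)-(\ceiling{\lambda q^{e+1}}-1) = q^{e+1}-\ceiling{\lambda q^{e+1}}$, I get the clean identity $I(r_0,\dots,r_{q-1},\lambda,e) = (f^{\,q^{e+1}-\ceiling{\lambda q^{e+1}}})^{[\frac{1}{q^{e+1}}]}$. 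Part (1) then follows because a bigger exponent on $f$ yields a smaller $[\frac{1}{q^{e+1}}]$-power, so $\lambda'\le\lambda$ makes $I(\dots,\lambda',e)\subseteq I(\dots,\lambda,e)$ and the sum over $\lambda'\le\lambda$ collapses to its largest term.

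For part (2), I would first note there are only finitely many $F$-jumping exponents of $f$ in $(0,1)$ (discreteness, \cite{BMS1}), and that since $R$ is Noetherian the increasing union defining $\tau(f^{\alpha})$ stabilizes, so for each such $\alpha$ there is $E_\alpha$ with $\tau(f^{\alpha}) = (f^{\ceiling{\alpha q^{e+1}}})^{[\frac{1}{q^{e+1}}]}$ for $e\ge E_\alpha$; put $E = \max_\alpha E_\alpha$. By Remark \ref{tirem} it is enough to produce $\lambda_{\alpha,e}$ when $\alpha$ is an $F$-jumping exponent. For $e\ge E$, write $\ceiling{\alpha q^{e+1}}-1 = i_0 + i_1 q + \cdots + i_e q^e$ and set $\lambda_{\alpha,e} = \sum_{k=0}^{e}(q-1-i_k)q^{k-e-1}$ (the base-$q$ digit complement); then $\lambda_{\alpha,e}q^{e+1}$ is an integer and $q^{e+1}-\ceiling{\lambda_{\alpha,e}q^{e+1}} = \ceiling{\alpha q^{e+1}}$, so the identity of part (1) gives $\tau(r_0,\dots,r_{q-1},\lambda_{\alpha,e},e) = \tau(f^{\alpha})$. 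Monotonicity is automatic: $\alpha_1\ge\alpha_2$ forces $\ceiling{\alpha_1 q^{e+1}}\ge\ceiling{\alpha_2 q^{e+1}}$, hence $\lambda_{\alpha_1,e}q^{e+1} = q^{e+1}-\ceiling{\alpha_1 q^{e+1}} \le q^{e+1}-\ceiling{\alpha_2 q^{e+1}} = \lambda_{\alpha_2,e}q^{e+1}$.

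For part (3) I would prove both inclusions by translating the colimit statement ``$\alpha$ is an $F$-jumping exponent'' into a strict containment of $[\frac{1}{q^{e+1}}]$-powers of $f$ at a single large level $e$, using stabilization, and reading that against the identity from part (1). For a jumping number $\lambda\in(0,1)$, write $\lambda = \lim_n \lambda_{e_n}$ with $\lambda_{e_n} = m_n/q^{e_n+1}\in S_{e_n}$, set $\alpha = 1-\lambda$, and observe that $\lambda_{e_n}\in S_{e_n}$ says exactly $(f^{\,q^{e_n+1}-m_n})^{[\frac{1}{q^{e_n+1}}]}\subsetneq(f^{\,q^{e_n+1}-m_n-1})^{[\frac{1}{q^{e_n+1}}]}$; comparing with $\tau(f^{\alpha})$ and $\tau(f^{\alpha'})$ for $\alpha'<\alpha$, each evaluated at a level large enough to have stabilized, forces $\tau(f^{\alpha})\subsetneq\tau(f^{\alpha'})$, so $\alpha$ is $F$-jumping --- here one must separate the case $1-\lambda_{e_n}\ge\alpha$ cofinally from the case $1-\lambda_{e_n}<\alpha$ cofinally, since the $\ceiling{\,\cdot\,}$ in the complement identity shifts by one between them. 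Conversely, given an $F$-jumping exponent $\alpha\in(0,1)$, for each $n$ pick $\alpha'_n$ with $\alpha-\tfrac1n<\alpha'_n<\alpha$ and $\tau(f^{\alpha'_n})\ne\tau(f^{\alpha})$, then choose $e_n\ge e_{n-1}$ large enough that both $\tau(f^{\alpha})$ and $\tau(f^{\alpha'_n})$ are already computed at level $e_n$; this produces an integer $m_n$ with $\ceiling{\alpha q^{e_n+1}}\ge m_n>\ceiling{\alpha'_n q^{e_n+1}}$ at which $(f^{m_n})^{[\frac{1}{q^{e_n+1}}]}\subsetneq(f^{m_n-1})^{[\frac{1}{q^{e_n+1}}]}$, i.e. $1-m_n/q^{e_n+1}\in S_{e_n}$, and these converge to $1-\alpha$.

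I expect the genuine difficulty to be concentrated in part (3): transferring the strict inclusion $\tau(f^{\alpha})\subsetneq\tau(f^{\alpha'})$, which is a statement about a colimit, down to a strict inclusion at one explicit finite level $e_n$ --- this requires choosing $e_n$ large enough to stabilize two test ideals simultaneously \emph{and} to pin the separating integer $m_n$ in the correct range --- and in keeping the off-by-one shifts between $\ceiling{\,\cdot\,}$ and $\floor{\,\cdot\,}$ straight when complementing base-$q$ digits. Parts (1) and (2), by contrast, are pure bookkeeping once the identity $I(r_0,\dots,r_{q-1},\lambda,e) = (f^{\,q^{e+1}-\ceiling{\lambda q^{e+1}}})^{[\frac{1}{q^{e+1}}]}$ is established.
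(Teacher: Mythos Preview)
Your proposal is correct and follows essentially the same route as the paper: the key identity $I(r_0,\dots,r_{q-1},\lambda,e)=(f^{\,q^{e+1}-\ceiling{\lambda q^{e+1}}})^{[1/q^{e+1}]}$ via digit complement, the reduction in part~(2) to $F$-jumping exponents using stabilization and Remark~\ref{tirem}, and the two-directional argument in part~(3) with the case split on whether $1-\lambda_{e_n}\ge\alpha$ or $<\alpha$ cofinally all match the paper's proof. Your explicit verification of the monotonicity claim in part~(2) is a small addition the paper leaves implicit.
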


\begin{pro}\label{shiftpro}
If $\lambda = \frac{m-1}{q^{e+1}} \in S_{e}$ and $m-1$ is not divisible by $q^{e}$ then the fractional part of $\frac{m-1}{q^{e}}$ is in $S_{e-1}$.
\end{pro}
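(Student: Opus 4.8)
The plan is to rephrase the condition ``$\lambda \in S_e$'' entirely in terms of the ideals
$$J_n^{(d)} := \big(r_{i_0}r_{i_1}^q \cdots r_{i_d}^{q^d}\big)^{[\frac{1}{q^{d+1}}]}, \qquad n - 1 = i_0 + i_1 q + \cdots + i_d q^d \text{ (base $q$),}$$
so that $\tau(r_0,\dots,r_{q-1},\tfrac{n-1}{q^{d+1}},d) = \sum_{k=1}^{n-1} J_k^{(d)}$ by definition. Since $\tau(r_0,\dots,r_{q-1},\,\cdot\,,d)$ is nondecreasing and constant on each half-open interval $(\tfrac{n-1}{q^{d+1}},\tfrac{n}{q^{d+1}}]$, we obtain the reformulation
$$\tfrac{n-1}{q^{d+1}} \in S_d \quad\Longleftrightarrow\quad J_n^{(d)} \not\subset \sum_{k=1}^{n-1} J_k^{(d)}.$$
I would record this first. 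Then write $m-1 = i_0 + i_1 q + \cdots + i_e q^e$, put $m' - 1 = i_0 + \cdots + i_{e-1} q^{e-1}$ and $\mu = \tfrac{m'-1}{q^e}$; the hypothesis $q^e \nmid (m-1)$ is exactly the condition $m' - 1 \ge 1$, so $\mu \in (0,1)$, and $\mu$ is the fractional part of $\tfrac{m-1}{q^e}$, which is what we must place in $S_{e-1}$.

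The technical core is two identities, both consequences of the projection formula $(\mathfrak{a}\cdot\mathfrak{b}^{[q^d]})^{[\frac{1}{q^d}]} = \mathfrak{a}^{[\frac{1}{q^d}]}\cdot\mathfrak{b}$ (a special case of Theorem~\ref{basistypethm}) together with $(\,\cdot\,)^{[\frac{1}{q^{d+1}}]} = \big((\,\cdot\,)^{[\frac{1}{q^d}]}\big)^{[\frac{1}{q}]}$ and the elementary fact that $(\,\cdot\,)^{[\frac{1}{q}]}$ commutes with finite sums of submodules. First, whenever $n - 1 = (n' - 1) + j q^e$ with $0 \le n' - 1 < q^e$ and $0 \le j < q$, factoring $r_j^{q^e}$ out of the generator of $J_n^{(e)}$ yields
$$J_n^{(e)} = \big(J_{n'}^{(e-1)} \cdot (r_j)\big)^{[\frac{1}{q}]}; \qquad\text{in particular}\qquad J_m^{(e)} = \big(J_{m'}^{(e-1)} \cdot (r_{i_e})\big)^{[\frac{1}{q}]}.$$
Second, splitting the index set $\{1,\dots,m-1\}$ according to the top base-$q$ digit $j$ of $k-1$ (which runs over $0,\dots,i_e - 1$ with the lower part unrestricted, together with $j = i_e$ and the lower part at most $m' - 1$) and using the first identity and additivity of $(\,\cdot\,)^{[\frac{1}{q}]}$, one obtains
$$\tau\big(r_0,\dots,r_{q-1},\tfrac{m-1}{q^{e+1}},e\big) = \sum_{j=0}^{i_e-1} \big(\tau(r_0,\dots,r_{q-1},1,e-1) \cdot (r_j)\big)^{[\frac{1}{q}]} \;+\; \big(\tau(r_0,\dots,r_{q-1},\mu,e-1) \cdot (r_{i_e})\big)^{[\frac{1}{q}]},$$
where one uses $\tau(r_0,\dots,r_{q-1},1,e-1) = \sum_{k=1}^{q^e} J_k^{(e-1)}$ and $\tau(r_0,\dots,r_{q-1},\mu,e-1) = \sum_{k=1}^{m'-1} J_k^{(e-1)}$.

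The deduction is by contraposition. Suppose $\mu \notin S_{e-1}$. By the reformulation at level $e-1$, $J_{m'}^{(e-1)} \subset \tau(r_0,\dots,r_{q-1},\mu,e-1)$. Multiplying by the ideal $(r_{i_e})$ and then applying $(\,\cdot\,)^{[\frac{1}{q}]}$---operations that preserve inclusions---gives
$$J_m^{(e)} = \big(J_{m'}^{(e-1)} \cdot (r_{i_e})\big)^{[\frac{1}{q}]} \subset \big(\tau(r_0,\dots,r_{q-1},\mu,e-1) \cdot (r_{i_e})\big)^{[\frac{1}{q}]} \subset \tau\big(r_0,\dots,r_{q-1},\tfrac{m-1}{q^{e+1}},e\big),$$
the last inclusion being simply the summand isolated in the displayed decomposition. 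By the reformulation at level $e$, this reads $\tfrac{m-1}{q^{e+1}} \notin S_e$, which is the contrapositive of the assertion.

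The step I expect to be the main obstacle is the second identity above: it requires careful base-$q$ digit bookkeeping and a verification that the projection formula propagates correctly through the nested $[\tfrac{1}{q^{\bullet}}]$-operations---in particular that $(\,\cdot\,)^{[\frac{1}{q}]}$ really commutes with the finite sums of ideals that assemble $\tau(r_0,\dots,r_{q-1},1,e-1)$ and $\tau(r_0,\dots,r_{q-1},\mu,e-1)$. Once that decomposition is secured, the argument closes at once and uses nothing about the ring beyond the standing hypotheses.
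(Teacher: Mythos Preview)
Your argument is correct. The core move---contraposition, followed by twisting the level-$(e{-}1)$ containment by $r_{i_e}$ to obtain the level-$e$ containment---is exactly the paper's. The difference is purely in packaging: you work directly with the operation $(\,\cdot\,)^{[1/q]}$, the projection formula $(\mathfrak{a}\cdot(r)^{[q^e]})^{[1/q^e]}=\mathfrak{a}^{[1/q^e]}\cdot(r)$, and its additivity, and you derive an explicit decomposition of $\tau(r_0,\dots,r_{q-1},\tfrac{m-1}{q^{e+1}},e)$ in terms of $\tau(r_0,\dots,r_{q-1},\mu,e-1)$ and $\tau(r_0,\dots,r_{q-1},1,e-1)$. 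The paper instead passes to the $\bD^e_R$-modules $(I^{[1/q^e]})^{[q^e]}=\bD^e_R I$ via Theorem~\ref{structurethm}, uses that elements of $\bD^e_R$ commute with $q^e$-th powers to multiply the containment through by $r_{i_e}^{q^e}$, enlarges to $\bD^{e+1}_R$, and descends by faithful flatness of Frobenius. These are dual formulations of the same computation (cf.\ Remark~\ref{fracrem}). Your structural decomposition is a bit more than is strictly needed---only the $(r_{i_e})$-summand is used for the final inclusion---but it is a clean statement in its own right. Conversely, the paper's $\bD^e_R$-phrasing is precisely what transports to the matrix case in Proposition~\ref{generalshiftpro}, where one commutes operators in $\bD^{e-1}_R$ past $[q^{e-1}]$-powers of matrix entries rather than of a single scalar.
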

\begin{proof}
We proceed by proving the contrapositive: If the fractional part of $\frac{m-1}{q^e}$ is not in $S_{e-1}$ then $\frac{m-1}{q^{e+1}}$ is not in $S_e$.\\

Write $m-1 = i_0+i_1q+....+i_eq^{e}$ and suppose that,
 $$\tau(r_0,...,r_{q-1}, \lambda_0, e-1)=\tau(r_0,...,r_{q-1}, \lambda_0 + q^{-e}, e-1)$$
where $\lambda_0 = \frac{i_0+i_1q+....+i_{e-1}q^{e-1}}{q^{e}}$ is the fractional part of $\frac{m-1}{q^{e}}$.\\

The goal is to show that
$$\tau(r_0,...,r_{q-1},\frac{m}{q^{e+1}},e) \subset \tau(r_0,...,r_{q-1},\frac{m-1}{q^{e+1}},e).$$
It is enough to show that
$$(r_{i_0}r_{i_1}^q...r_{i_e}^{q^e})^{[\frac{1}{q^{e+1}}]} \subset \tau(r_0,...,r_{q-1},\frac{m-1}{q^{e+1}}, e).$$

By the equality $\bD^{e}_RI = (I^{[\frac{1}{q^e}]})^{[q^e]}$ from \ref{structurethm} for any ideal $I \subset R$, 
$$\tau(r_0,...,r_{q-1},\lambda_0,e-1) = \tau(r_0,...,r_{q-1}, \lambda_0 + q^{-e}, e-1)$$
implies
$$\xymatrix{\tau(r_0,...,r_{q-1},\lambda_0+q^{-e},e-1)^{[q^e]} \ar@{}[r]|= \ar@{}[d]|{\rotatebox{90}{$\subset$}} & \tau(r_0,...,r_{q-1}, \lambda_0 , e-1)^{[q^e]} \ar@{}[d]|{\rotatebox{90}{$=$}}\\
\bD^{e}_Rr_{i_0}r_{i_1}^q....r_{i_{e-1}}^{q^{e-1}}  & \sum_{\lambda' \leq \lambda_0} \bD^{e}_Rr_{j_0}r_{j_1}^q....r_{j_{e-1}}^{q^{e-1}}.}$$

$\bD^{e}_R$ is defined as the operators which commute with $q^e$ powers so
$$\bD^{e}_Rr_{i_0}r_{i_1}^q....r_{i_{e-1}}^{q^{e-1}}r_{i_e}^{q^e} \subset \sum_{\lambda' \leq \lambda_0} \bD^{e}_Rr_{j_0}r_{j_1}^q....r_{j_{e-1}}^{q^{e-1}}r_{i_e}^{q^{e}}.$$

This induces the containment
$$\bD^{e+1}_Rr_{i_0}r_{i_1}^q....r_{i_{e-1}}^{q^{e-1}}r_{i_e}^{q^e} \subset \sum_{\lambda' \leq \lambda_0} \bD^{e+1}_Rr_{j_0}r_{j_1}^q....r_{j_{e-1}}^{q^{e-1}}r_{i_e}^{q^{e}} \subset \sum_{\lambda' \leq \lambda} \bD^{e+1}_Rr_{j_0}r_{j_1}^q....r_{j_{e-1}}^{q^{e-1}}r_{j_e}^{q^{e}}$$
which by a second application of \ref{structurethm} implies that
$${(r_{i_0}r_{i_1}^q...r_{i_e}^{q^e})^{[\frac{1}{q^{e+1}}]}}^{[q^{e+1}]} \subset \tau(r_0,...,r_{q-1},\frac{m-1}{q^{e+1}}, e)^{[q^{e+1}]}.$$

The proposition then follows from the faithful flatness of Frobenius.\\
\end{proof}
\begin{cor}\label{shiftthm}\label{shiftcor} \
\begin{enumerate}
\item If $\lambda \in (0,1]$ is a jumping number for $r_0,...,r_q$ then either the fractional part of $q\lambda$ is a jumping number or $\lambda = \frac{a}{q}$ with $0 < a \leq q$.
\item If $\lambda$ is a jumping number in the extended sense then either $q\lambda$ is a jumping number or $q\lambda$ is an integer.  In particular, if $q\lambda$ is not an integer then the fractional part of $q\lambda$ is a jumping number.
\end{enumerate}
\end{cor}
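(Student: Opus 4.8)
The plan is to obtain both parts as limiting consequences of Proposition~\ref{shiftpro}, which is the single place where the multiplication-by-$q$ operation has been analyzed at finite level.

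First I would dispose of part (1). Fix a jumping number $\lambda \in (0,1]$ and assume we are not in the excluded case, i.e.\ $\lambda \neq a/q$ for every integer $0 < a \leq q$; since $\lambda \in (0,1]$ this is exactly the condition $q\lambda \notin \mathbb{Z}$. By definition write $\lambda = \lim_{n} \lambda_{e_n}$ with $\lambda_{e_n} = m_n/q^{e_n+1} \in S_{e_n}$ and $(e_n)$ strictly increasing. Then $q\lambda_{e_n} = m_n/q^{e_n} \to q\lambda \notin \mathbb{Z}$, so for all large $n$ the number $m_n/q^{e_n}$ is not an integer, i.e.\ $q^{e_n} \nmid m_n$; discarding finitely many terms, I may assume this holds for every $n$. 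Proposition~\ref{shiftpro} (applied with $e = e_n$) then puts the fractional part of $m_n/q^{e_n}$ into $S_{e_n-1}$. Because $q\lambda$ is at a positive distance from $\mathbb{Z}$, the integer parts $\lfloor q\lambda_{e_n}\rfloor$ are eventually the constant $\lfloor q\lambda\rfloor$, so the fractional parts converge: $\{m_n/q^{e_n}\} = q\lambda_{e_n} - \lfloor q\lambda_{e_n}\rfloor \to q\lambda - \lfloor q\lambda\rfloor$, which is the fractional part of $q\lambda$ and is nonzero. Since $(e_n - 1)$ is again strictly increasing, this exhibits the fractional part of $q\lambda$ as a nonzero accumulation point of $\{S_e\}$, i.e.\ a jumping number, completing part (1).

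Part (2) is then a matter of keeping track of integer translates. A jumping number in the extended sense is of the form $\lambda = \lambda_0 + k$ with $\lambda_0 \in (0,1)$ an ordinary jumping number and $k \in \mathbb{Z}$, and $q\lambda = q\lambda_0 + qk \equiv q\lambda_0 \pmod{\mathbb{Z}}$. If $q\lambda_0 \in \mathbb{Z}$ then $q\lambda \in \mathbb{Z}$. Otherwise $\lambda_0 \neq a/q$, so part (1) shows the fractional part of $q\lambda_0$ — which equals the fractional part of $q\lambda$ — is an ordinary jumping number; hence $q\lambda$, being an integer translate of it, is a jumping number in the extended sense. The concluding sentence is then immediate: if $q\lambda \notin \mathbb{Z}$, the fractional part of $q\lambda$ is precisely the ordinary jumping number just produced.

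The one delicate point is the limit passage in part (1): one needs $q\lambda \notin \mathbb{Z}$ both to verify the hypothesis $q^{e_n} \nmid m_n$ of Proposition~\ref{shiftpro} for large $n$ and to guarantee that taking fractional parts commutes with the limit (equivalently, that the integer parts eventually stabilize so that no mass ``escapes'' across an integer). This is exactly the source of the dichotomy in the statement, and no further difficulty arises.
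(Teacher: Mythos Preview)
Your proof is correct and follows essentially the same approach as the paper: both reduce to Proposition~\ref{shiftpro} after arguing that eventually $q^{e_n}\nmid m_n$, then pass to the limit of fractional parts. Your treatment is slightly more direct (you observe immediately that $m_n/q^{e_n}\to q\lambda\notin\mathbb{Z}$ forces non-integrality for large $n$, whereas the paper phrases this step as a proof by contradiction), but the content is the same. One cosmetic point: an ordinary jumping number lies in $(0,1]$, not $(0,1)$, but since $\lambda_0=1$ already gives $q\lambda\in\mathbb{Z}$ this does not affect your argument.
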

\begin{proof}\
\begin{enumerate}
\item We prove that if $\lambda$ is not of the form $\frac{a}{q}$ for $0 < a \leq q$ then the fractional part of $q\lambda$ is a jumping number.\\

Claim: There exists a sequence $\frac{m_n}{q^{e_n+1}} \rightarrow \lambda$ with
\begin{enumerate}
\item $\frac{m_n}{q^{e_n+1}} \in S_{e_n}$.
\item $e_n \rightarrow \infty$.
\item $m_n$ is not divisible by $q^{e_n}$.\\
\end{enumerate}

We prove the claim by contradiction.  Assume no such sequence exists.  As $\lambda$ is an accumulation point, we know there is a sequence $\frac{m_n}{q^{e_n+1}}$ converging to $\lambda$ with $\frac{m_n}{q^{e_n+1}} \in S_{e_n}$ and $e_n \rightarrow \infty$.  By the contradiction assumption, after dropping finitely many terms, we may assume $m_n$ is always divisible by $q^{e_n}$. Write $m_n=a_nq^{e_n}$ with $0 < a_n < q$ an integer.  Dividing both sides by $q^{e_n}$ and taking the limit shows that
$$q\lambda =\lim_{n \rightarrow \infty}a_n.$$

The integers $a_n$ therefore must eventually be the constant $a=q\lambda$.  In particular, $\lambda = \frac{a}{q}$, a contradiction. \\

Let $\frac{m_n}{q^{e_n+1}}$ be a sequence as in the claim.
By considering a subsequence we may assume that for all $n$ the integer $m_n$ is not divisible by $q^{e_n}$.  By the previous proposition the fractional parts of $\frac{m_n}{q^{e_n}}$ are in $S_{e_n-1}$ for all $n$.  The sequence $\frac{m_n}{q^{e_n}}$ converging to $q\lambda$ in $\mathbb{R}$ implies that the sequence $\overline{\frac{m_n}{q^{e_n}}}$ converges in $\mathbb{R}/\mathbb{Z}$ to $\overline{q\lambda}$.  As the fractional part of $q \lambda \neq 0$, the fractional part of $q\lambda$ is the unique $(0,1]$ representative of $\overline{q\lambda} \in \mathbb{R}/\mathbb{Z}$.  Therefore, the sequence of fractional parts of $\frac{m_n}{q^{e_n}}$ converges to the fractional part of $q\lambda$ and hence the fractional part of $q\lambda$ is an accumulation point of $S$.\\

\item We proceed by proving if $q\lambda$ is not an integer then $q\lambda$ is a jumping number.  By assumption, there exists an integer $j$ such that $\frac{m_n}{q^{e_n+1}}+j \rightarrow \lambda$ where $\frac{m_n}{q^{e_n+1}} \in S_{e_n}$.  As $q\lambda$ is not an integer, we know that $\lambda-j \neq \frac{a}{q}$ with $0 < a \leq q$.\\

By $1$, the fractional part of $q\lambda - qj$ is a jumping number.  Hence $q\lambda-qj$ is a jumping number (in the extended sense) which implies $q\lambda$ is a jumping number.\\
\end{enumerate}
\end{proof}

The next proposition will give us insight into the behavior of list test ideals under the formation of quotient rings.

\begin{nota}(Multi-index notation) If $x_1,...,x_n \in R$ are elements for any vector of integers $\vec{u} = \langle u_1,...,u_n \rangle$ then we define
$$x^{\vec{u}} = x_1^{u_1}x_2^{u_2}...x_n^{u_n}.$$  We also define the notation
$$0 \leq \vec{u} \leq \beta$$ for $\beta \in \mathbb{R}$ to indicate all integer valued vectors $\vec{u} = \langle u_1,...,u_n \rangle$ such that $0 \leq u_i \leq \beta$ for all $1 \leq i \leq n$.\\
\end{nota}

\begin{pro}\label{quotpro}
Fix a regular sequence $x_1,...,x_n \in R$ defining a maximal ideal $m \subset R$.  Let $Q:R \rightarrow S = R/(x_n)$ denote the quotient map and $y_i = Q(x_i)$ for $1 \leq i < n$.   Suppose that the following axioms are satisfied.
\begin{enumerate}
\item $F_{R*}^{(e+1)}R$ is freely generated over $R$ by $x^{\vec{u}}$ (in multi-index notation) for $0 \leq \vec{u} \leq q^{e+1}-1$.
\item $F_{S*}^{(e+1)}S$ is freely generated over $S$ by $y^{\vec{v}}$  (in multi-index notation) for $0 \leq \vec{v} \leq q^{e+1}-1$
\end{enumerate}
If $r_0,...,r_{q-1}$ is a list in $S$ and $\widetilde{r_0},...,\widetilde{r_{q-1}}$ are representatives in $R$ with $Proj_{x^{\vec{u}}}(\widetilde{r_k})=0$ for all $\vec{u}$ with non-zero $n^{th}$ component then
$$I(\widetilde{r_0},...,\widetilde{r_{q-1}}, \lambda, e) + (x_n) = Q^{-1}(I(r_0,...,r_{q-1},\lambda,e))$$
and 
$$\tau(\widetilde{r_0},...,\widetilde{r_{q-1}},\lambda, e) + (x_n) = Q^{-1}(\tau(r_0,...,r_{q-1},\lambda,e)).$$
\end{pro}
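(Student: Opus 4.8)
The plan is to reduce everything to a statement about the operators $(-)^{[1/q^{e+1}]}$ applied to monomial-indexed ideals, exploiting the explicit basis description in hypothesis (1) and Theorem~\ref{basistypethm}. First I would observe that the two displayed equalities are related: since $\tau(\widetilde{r_0},\dots,\widetilde{r_{q-1}},\lambda,e) = \sum_{\lambda'\le\lambda} I(\widetilde{r_0},\dots,\widetilde{r_{q-1}},\lambda',e)$ and $Q^{-1}$ commutes with sums of ideals (as $Q$ is surjective), the second equation follows from the first one applied to every $\lambda'\le\lambda$. So the whole proof reduces to the single statement
$$I(\widetilde{r_0},\dots,\widetilde{r_{q-1}},\lambda,e) + (x_n) = Q^{-1}\bigl(I(r_0,\dots,r_{q-1},\lambda,e)\bigr).$$
Write $\ceiling{\lambda q^{e+1}}-1 = i_0 + i_1 q + \dots + i_e q^e$ and set $g = r_{i_0} r_{i_1}^q \cdots r_{i_e}^{q^e} \in S$, $\widetilde{g} = \widetilde{r_{i_0}}\widetilde{r_{i_1}}^q\cdots\widetilde{r_{i_e}}^{q^e}\in R$. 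Then $I(r_0,\dots,r_{q-1},\lambda,e) = (g)^{[1/q^{e+1}]}$ in $S$ and similarly in $R$, so the claim becomes $(\widetilde{g})^{[1/q^{e+1}]} + (x_n) = Q^{-1}\bigl((g)^{[1/q^{e+1}]}\bigr)$, equivalently $Q\bigl((\widetilde{g})^{[1/q^{e+1}]}\bigr) = (g)^{[1/q^{e+1}]}$.

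Next I would compute both sides via Theorem~\ref{basistypethm} (in its $l=1$ ideal form). Expand $\widetilde{g} = \sum_{\vec u} a_{\vec u}^{\,q^{e+1}} x^{\vec u}$ over $R$ using the basis from hypothesis (1), where the sum runs over $0\le \vec u\le q^{e+1}-1$; then $(\widetilde{g})^{[1/q^{e+1}]} = (a_{\vec u} : \vec u)$. The key algebraic point is the hypothesis $\mathrm{Proj}_{x^{\vec u}}(\widetilde{r_k}) = 0$ whenever $\vec u$ has nonzero $n$-th component: I need to check that this property is inherited by the product $\widetilde{g} = \prod_j \widetilde{r_{i_j}}^{q^j}$, i.e.\ that the $x^{\vec u}$-coefficient of $\widetilde g$ vanishes when the $n$-th component of $\vec u$ is nonzero. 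This is where the main calculation lives: writing each factor $\widetilde{r_{i_j}}^{q^j} = \sum a^{q^{e+1}} x^{\vec u}$ with all contributing $\vec u$ having $n$-th component $\equiv 0 \pmod{q^j}$ and $\widetilde{r_{i_j}}$ itself supported on $\vec u$ with zero $n$-th component, one multiplies out, reduces exponents mod $q^{e+1}$ via the basis, and tracks that no carry can introduce a nonzero $n$-th component in the surviving basis monomials — here one uses that $i_j < q$ and $\sum_j i_j q^j < q^{e+1}$. Granting that, $\widetilde g = \sum_{\vec w} a_{\vec w}^{\,q^{e+1}} x^{\vec w}$ where $\vec w$ ranges over vectors with zero $n$-th component, so $x^{\vec w}$ is a product of $y$-monomials lifted to $R$, and applying $Q$ gives $g = \sum_{\vec w} Q(a_{\vec w})^{q^{e+1}} y^{\vec w}$, which by hypothesis (2) and Theorem~\ref{basistypethm} yields $(g)^{[1/q^{e+1}]} = (Q(a_{\vec w}) : \vec w) = Q\bigl((\widetilde g)^{[1/q^{e+1}]}\bigr)$.

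Finally I would assemble the pieces: the inclusion $Q\bigl((\widetilde g)^{[1/q^{e+1}]}\bigr) \subseteq (g)^{[1/q^{e+1}]}$ is the easy direction (image of a module whose $q^{e+1}$-th power expansion dominates $\widetilde g$ maps to one dominating $g = Q(\widetilde g)$), and the reverse inclusion is exactly the content of the coefficient computation above; equivalently both sides are computed by the same finite generating set $\{Q(a_{\vec w})\}$. Pulling back along $Q$ and using $\ker Q = (x_n)$ then gives $(\widetilde g)^{[1/q^{e+1}]} + (x_n) = Q^{-1}\bigl((g)^{[1/q^{e+1}]}\bigr)$, and summing over $\lambda'\le\lambda$ gives the statement for $\tau$. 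The main obstacle, as indicated, is the bookkeeping in Step~2 showing that the ``no $x_n$-direction'' support condition propagates through the product and the mod-$q^{e+1}$ reductions; everything else is formal manipulation with $Q$, sums of ideals, and faithful flatness of Frobenius as already used in Theorem~\ref{basistypethm}.
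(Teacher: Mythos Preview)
Your proposal is correct and follows essentially the same route as the paper: reduce $\tau$ to $I$, form the product $\widetilde g$, compute both $[\frac{1}{q^{e+1}}]$-powers via Theorem~\ref{basistypethm}, and match generators. The paper simply asserts the step you flag as the ``main obstacle'' (that $a_{ij}=0$ for $j>0$); your caution there is warranted, but the bookkeeping you outline is more involved than necessary---the bound $\sum_j i_j q^j < q^{e+1}$ is irrelevant, and the clean way to see it is that $\bigoplus_{u_n=0} R^{q^{e+1}} x^{\vec u}$ is a \emph{subring} of $R$ (since $x^{\vec u}\cdot x^{\vec v}=(\prod_i x_i^{q_i})^{q^{e+1}} x^{\vec r}$ with $r_n=0$ whenever $u_n=v_n=0$), so it is automatically closed under the products and $q^j$-th powers forming $\widetilde g$.
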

\begin{proof}
Write $\ceiling{\lambda q^{e+1}} - 1 = i_0  + i_1q + ... + i_e q^e$ and set
$$\widetilde{r} = \widetilde{r_{i_0}}...\widetilde{r_{i_e}}^{q^e}.$$

We want to show that $$Q^{-1}((r)^{[\frac{1}{q^{e+1}}]}) = (\widetilde{r})^{[\frac{1}{q^{e+1}}]}+(x).$$

Reindex the set $\{x^{\vec{v}} | \text{ The $n^{th}$ component of $\vec{v}$ is $0$ }\}$ by $\{b_i\}_{i=0}^{q^{(e+1)(n-1)}-1}$.  We have that $Q(b_i)$ is a basis for $F_{S*}S$ over $S$ and $b_ix_n^j$ for $0 \leq i < q^{(e+1)(n-1)}$ and $0 \leq j < q$ is a basis for $F_{R*}R$ over $R$.\\

Write $\widetilde{r} = \sum_{i,j}a_{ij}^{q^{e+1}}b_ix_n^j$ then, by the assumption $Proj_{x^{\vec{u}}}(\widetilde{r_k})=0$ for all $\vec{u}$ with non-zero $n^{th}$ component, $a_{ij}=0$ for all $j > 0$.  By \ref{basistypethm}

$$I(r_0,...,r_{q-1}, \lambda, e) = (\{\overline{a_{i0}}\})$$
and 
$$I(\widetilde{r_0},...,\widetilde{r_{q-1}}, \lambda,e) = (\{a_{ij}\})=(\{a_{i0}\}).$$

Thus,

$$Q^{-1}(I(r_0,...,r_{q-1},\lambda,e)) = (\{a_{i0} | 0 \leq i < q^{e(n-1)-1}\} \cup \{x_n\}) = I(\widetilde{r_0},...,\widetilde{r_{q-1}},\lambda,e) + (x_n).$$

The analogous result for simple list test ideals follows immediately.\\
\end{proof}
\begin{lem}\label{polylem}
If $R$ is a polynomial ring then the jumping numbers for $r_0,...,r_{q-1}$ in $(0,1]$ are finite and rational.
\end{lem}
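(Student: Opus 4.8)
The plan is to reduce the problem, via a dimension induction on $\dim R$, to the one-dimensional case, and to handle the one-dimensional case by hand using the structure of ideals in $\boldk[x]$ (or a localization thereof) together with the contraction relation \ref{shiftthm} between jumping numbers at consecutive levels of $q$-divisibility. The base case $\dim R = 0$ is immediate, since then $R$ is a field and every nonzero $I(r_0,\dots,r_{q-1},\lambda,e)$ equals $(1)$, so $S_e$ is either empty or $\{$a single value coming from the transition away from the zero ideal$\}$; in any case there is at most one accumulation point and it is rational (indeed it is controlled by which of the $r_k$ vanish).

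For the inductive step I would mimic the strategy by which \cite{BMS1} deduced discreteness and rationality of $F$-jumping exponents: cut down by a generic hyperplane. Concretely, choose a regular system of parameters $x_1,\dots,x_n$ (after a linear change of coordinates one may arrange the hypotheses of \ref{quotpro}, and after replacing the $\widetilde{r_k}$ by their "truncations" with vanishing $x_n$-components — which only enlarges each ideal by the Frobenius-stable amount allowed by \ref{basistypethm} — one gets genuine representatives), and apply Proposition \ref{quotpro}: the simple list test ideals of the truncated list in $R$ satisfy $\tau(\widetilde{r},\lambda,e)+(x_n) = Q^{-1}(\tau(r,\lambda,e))$, where $r_k$ is the image in $S = R/(x_n)$, a polynomial ring of dimension $n-1$. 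Hence $\tau(\widetilde r,\lambda',e) \subsetneq \tau(\widetilde r,\lambda,e)$ forces (after passing to $S$) $\tau(r,\lambda',e) \subsetneq \tau(r,\lambda,e)$, so the sets $S_e$ for the truncated list are contained in those for the list in $S$; by induction the accumulation points of the latter are finite and rational, hence so are the former. It remains to compare the jumping numbers of the truncated list with those of the original list $r_0,\dots,r_{q-1}$ in $R$: one wants that replacing $r_k$ by its $x_n$-truncation does not change the set of jumping numbers, which should follow because the difference between the two ideals at level $e$ lives in $(x_n)^{[\,\cdot\,]}$ and, by Corollary \ref{genbounds}, the degree of generators of $I(\cdot,\lambda,e)^{[\frac1{q^{e+1}}]}$ is bounded independently of $e$; a compactness/pigeonhole argument in the finitely many possible generator-configurations of bounded degree then shows the two $\{S_e\}$ families have the same accumulation set.

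Actually, rather than track the truncation comparison delicately, the cleaner route — and the one I would write up — is to bound directly the number and denominators of jumping numbers. By Corollary \ref{genbounds}, $I(r_0,\dots,r_{q-1},\lambda,e)$ is generated in degree at most $\lfloor d q^{e+1}/q^{e+1}\rfloor = d$, where $d = \max_k \deg r_k$; so every $\tau(r_0,\dots,r_{q-1},\lambda,e)$ is a monomial-or-polynomial ideal generated in degrees $\le d$, and there are only finitely many such ideals — say at most $C = C(d,n)$ of them. Since for fixed $e$ the $\tau(\cdot,\lambda,e)$ form an increasing chain as $\lambda$ runs over $(0,1)$, the set $S_e$ has at most $C$ elements, uniformly in $e$. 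Combined with the contraction relation of Corollary \ref{shiftcor} — which says that if $q\lambda$ is not an integer then the fractional part of $q\lambda$ is again a jumping number (so the multiplication-by-$q$ map, modulo $\mathbb Z$, sends the accumulation set into itself up to the finitely many "exceptional" values $\tfrac aq$) — a standard finiteness argument for $\times q$-invariant sets of reals shows: any accumulation point $\lambda$ of $\{S_e\}$ has a forward $q$-orbit (mod $\mathbb Z$) consisting entirely of jumping numbers, landing after finitely many steps among the finitely many values $a/q$; hence $\lambda$ is eventually periodic under $\times q$ mod $\mathbb Z$, i.e. $\lambda$ is rational with denominator prime to $q$ times a power of $q$ — in any case rational — and there are only finitely many such orbits, hence finitely many jumping numbers.

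The main obstacle I anticipate is precisely the bookkeeping in the last step: making rigorous that "$\times q$ mod $\mathbb Z$ maps the set of jumping numbers into itself, up to the exceptional values $a/q$" combines with the uniform bound $|S_e|\le C$ to force rationality. The subtle point is Case 2 in the proof of Theorem \ref{testidealexa} and the footnote there — one must genuinely rule out "drifting" accumulation points that are limits of $S_e$-elements which are themselves $q$-divisible at every stage; the degree bound from \ref{genbounds} (which is special to $R$ a polynomial ring, hence the hypothesis of the lemma) is exactly what kills this, by forcing the relevant data to stabilize. I would isolate this as the key claim and prove it first, then assemble the rest quickly.
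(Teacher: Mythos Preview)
Your second (``cleaner'') approach is on the right track---the uniform degree bound from \ref{genbounds} is indeed the engine---but the argument you extract from it is not strong enough, and the finiteness/rationality step has a genuine gap.

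First, a minor slip: the generator $r_{i_0}r_{i_1}^q\cdots r_{i_e}^{q^e}$ has degree at most $d(1+q+\cdots+q^e)=d\frac{q^{e+1}-1}{q-1}$, so after applying $[\frac{1}{q^{e+1}}]$ the bound from \ref{genbounds} is $\lfloor \frac{d}{q-1}\rfloor$, not $d$. This does not affect your conclusion that the bound is uniform in $e$ and $\lambda$.

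The real problem is the passage from ``$|S_e|\le C$ uniformly'' to ``finitely many accumulation points.'' This inference fails in general: take $S_{2e}=\{\tfrac13\}$ and $S_{2e+1}=\{\tfrac23\}$, so $|S_e|=1$ always, yet there are two accumulation points; replacing the two values by a sequence wandering densely through $(0,1)$ gives infinitely many. Your $\times q$-orbit argument does not rescue this: you claim the forward orbit of a jumping number under $\lambda\mapsto\{q\lambda\}$ must ``land after finitely many steps among the values $a/q$,'' but \ref{shiftcor} says no such thing---it only says the orbit stays inside the set of jumping numbers (or hits an integer). For $\lambda=\tfrac13$, $q=2$ the orbit is $\tfrac13,\tfrac23,\tfrac13,\ldots$ and never terminates. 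Rationality via eventual periodicity of the $\times q$-orbit requires finiteness \emph{first}, by pigeonhole; it cannot be used to prove finiteness.

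What the paper does, and what you are missing, is a strictly stronger consequence of the degree bound: not only is each $S_e$ small, but there are no \emph{long strictly decreasing sequences} $s_n\in S_{e_n}$ with $s_{n+1}<s_n$ and $e_{n+1}\ge e_n$. This uses the monotonicity of $\tau(r_0,\ldots,r_{q-1},\lambda,e)$ in \emph{both} variables---increasing in $\lambda$ and decreasing in $e$---so that such a sequence yields a strictly decreasing chain of ideals all living (after intersection) inside the fixed finite-dimensional space $W$ of polynomials of degree $\le\lfloor\frac{d}{q-1}\rfloor$. Once you have the no-long-decreasing-sequences property, finiteness of accumulation points follows by a short $3\epsilon$-separation argument, and then rationality by pigeonhole on the $\times q$-orbit. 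Your final paragraph correctly senses something is missing (``forcing the relevant data to stabilize''), but the specific mechanism---combining the two monotonicities to bound decreasing chains across varying $e$---is the key idea you have not supplied.

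Your first approach, dimension induction via \ref{quotpro}, is headed in the wrong direction: \ref{quotpro} lifts lists from $S=R/(x_n)$ up to $R$, and is used in the paper to deduce the \emph{general} smooth case from the polynomial case, not to induct on $\dim R$ within polynomial rings. The comparison you flag between an arbitrary list and its $x_n$-truncation is not addressed by \ref{quotpro} and would need separate work.
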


\begin{proof}
Let $d$ be the maximum of the degrees of $\{r_i\}$.  It follows from \ref{genbounds} that $I(r_0,..,r_{q-1}, \lambda, e)$ can be generated by elements with degree less than or equal to $\frac{d(q^{e+1}-1)}{(q-1)q^{e+1}}$ for any $\lambda$ or $e$.  These numbers increase with $e$ and hence $I(r_0,...,r_{q-1},\lambda,e)$ can be generated by elements with degree less than or equal to $\floor{\frac{d}{q-1}}=\floor{\lim_e \frac{d(q^{e+1}-1)}{(q-1)q^{e+1}}}$.  By construction, the same is true for the ideals $\tau(r_0,...,r_{q-1},\lambda,e)$.\\

If $W$ is the vector space of polynomials of degree less than or equal to $\floor{\frac{d}{q-1}}$, then 
$$\tau(r_0,...,r_{q-1},\lambda',e) = \tau(r_0,...,r_{q-1},\lambda,e)$$
 if and only if they are equal after intersection with $W$.  If we set $W_{\lambda,e} = \tau(r_0,...,r_{q-1},\lambda,e) \cap W$, then for fixed $e$ the collection $\{W_{\lambda,e}\}$ is a sequence of vector subspaces of $W$.  Therefore the cardinalities of sets $S_e$ are uniformly bounded by the vector space length of $W$ plus one.\\ 

Consider the collection of finite sets $T_e = \{0\} \cup S_e.$  This collection of sets has the property that if $t$ is an accumulation point of $\{T_e$\} then the fractional part of $qt$ is also an accumulation point by \ref{shiftthm}.\\

 We will say a sequence $t_n$ of elements from $\cup_e T_e$ is strictly decreasing if $t_{n+1} < t_{n}$ and if each $t_n \in T_{e_n}$ then $e_{n+1} \geq e_n$.  We will now prove that the set $\{T_e\}$ has no strictly decreasing sequences of length more than $length(W)+2$ by showing that $\cup_e S_e$ has no strictly decreasing sequences of length more than $length(W)+1$.\\

Let $s_n \in S_{e_n}$ be a sequence of length $N'$ with $s_{n+1} < s_{n}$ and $e_{n+1} \geq e_n$. We have 

$$\tau(r_0,...,r_{q-1},s_{n+1},e_{n+1}) \subset \tau(r_0,...,r_{q-1},s_{n}, e_{n+1}) \subset \tau(r_0,...,r_{q-1}, s_n, e_n)$$
with the first containment strict as $s_{n+1} \in S_{e_{n+1}}$.  This gives us the strictly decreasing sequence with $N'$ terms,
$$... \subset \tau(r_0,...,r_{q-1}, s_{n+1}, e_{n+1}) \subset \tau(r_0,...,r_{q-1}, s_{n}, e_{n}) \subset ... \subset \tau(r_0,...,r_{q-1},s_1,e_1)$$
Repeating the argument as before, these containments are strict if and only if they are strict after intersection with $W$.  This implies $N' \leq length(W)+1$.\\

The sets $T_e$ satisfy the axioms of the next proposition and thus the set of its accumulation points are finite and rational. 
\end{proof}

\begin{pro}
If $T_e \subset [0,1]$ is a collection of subsets with the properties:
\begin{enumerate}
\item There exists $N$ such that $\{T_e\}$ contains no length $N$ sequences $t_n \in T_{e_n}$ with  $t_{n+1} < t_{n}$ and $e_{n+1} \geq e_{n}$.
\item If $\lambda$ is an accumulation point then so is the fractional part of $q\lambda$.
\end{enumerate}
then the set of accumulation points of $\{T_e\}$ is a finite set of rational numbers.
\end{pro}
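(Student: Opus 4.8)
The plan is to split the proof into two independent parts: first that the set of accumulation points is \emph{finite}, using only property $1$ together with the definition of accumulation point of a family $\{T_e\}$; and then that each accumulation point is \emph{rational}, using property $2$ together with finiteness. Write $\Lambda=\bigcap_{e'}\overline{\bigcup_{e\geq e'}T_e}$ for the set of accumulation points, and let $N$ be the integer furnished by property $1$.

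For finiteness I would argue by contradiction: suppose $\lvert\Lambda\rvert\geq N$ and choose distinct $a_1>a_2>\cdots>a_N$ in $\Lambda$. Put $\delta=\tfrac{1}{3}\min_{1\leq i<N}(a_i-a_{i+1})>0$. I then build recursively a strictly decreasing sequence $t_1>t_2>\cdots>t_N$ with $t_i\in T_{e_i}$ and $e_1\leq e_2\leq\cdots\leq e_N$, which is exactly a length-$N$ sequence forbidden by property $1$. To build it, start by choosing any $e_1$ and $t_1\in T_{e_1}$ with $\lvert t_1-a_1\rvert<\delta$ (possible since $a_1$ is an accumulation point). Having chosen $t_1,\dots,t_j$ with indices $e_1\leq\cdots\leq e_j$, use that $a_{j+1}\in\overline{\bigcup_{e\geq e_j}T_e}$ to find $e_{j+1}\geq e_j$ and $t_{j+1}\in T_{e_{j+1}}$ with $\lvert t_{j+1}-a_{j+1}\rvert<\delta$; then $t_j>a_j-\delta\geq a_{j+1}+3\delta-\delta>a_{j+1}+\delta>t_{j+1}$, so the sequence stays strictly decreasing. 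This contradiction shows $\lvert\Lambda\rvert\leq N-1$. (The only combinatorics here is the trivial fact that any $N$ distinct reals can be listed in strictly decreasing order.)

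For rationality, let $T\colon[0,1]\to[0,1)$ send $\lambda$ to the fractional part of $q\lambda$; property $2$ says $T(\Lambda)\subseteq\Lambda$. Since $\Lambda$ is finite, the forward orbit $\{\lambda,T\lambda,T^2\lambda,\dots\}$ of any $\lambda\in\Lambda$ is finite, so $T^i\lambda=T^j\lambda$ for some $0\leq i<j$. Because $T$ is precisely the shift on base-$q$ digits, this forces the base-$q$ expansion of $\lambda$ to be eventually periodic, hence $\lambda$ to be rational; this gives the claim.

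I expect essentially everything here to be routine; the one point that demands care is the interface in the finiteness step between the (slightly nonstandard) notion of accumulation point of the family $\{T_e\}$ and the sequential condition in property $1$. Specifically, one must be sure that each $a_{j+1}$ can be approximated by an element of some $T_e$ with index $e$ as large as one wishes, so that the indices $e_i$ can be arranged to be non-decreasing — this is exactly what the intersection over $e'$ in the definition of accumulation point provides. The rationality half is then formal.
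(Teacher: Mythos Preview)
Your proposal is correct and follows essentially the same approach as the paper: a contradiction argument for finiteness built by approximating a decreasing list of accumulation points to produce a forbidden decreasing sequence, followed by the observation that multiplication by $q$ maps the finite set of accumulation points to itself, forcing eventual periodicity and hence rationality. The only cosmetic differences are that you extract the tighter bound $\lvert\Lambda\rvert\leq N-1$ (the paper uses $N+1$ accumulation points to build a length-$(N+1)$ sequence), and you phrase the rationality step via base-$q$ digit shifts rather than writing $(q^i-q^j)\lambda\in\mathbb{Z}$ directly; these are equivalent.
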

\begin{proof}
We start by proving the finiteness statement by contradiction. Suppose there are distinct $N+1$ accumulation points labeled in decreasing order $\lambda_1 > \lambda_2 >... > \lambda_{N+1}$.  Choose $\epsilon$ small enough so that each accumulation point is at least $3 \epsilon$ away from the other accumulation points.  Choose $e_1$ and $t_1 \in S_{e_1}$ such that $|\lambda-t_1| < \epsilon$.  Inductively, choose $e_{i+1} \geq e_{i}$ and $t_{i+1} \in S_{e_{i+1}}$ with $|\lambda_{i+1}-t_{i+1}| < \epsilon$.  Notice that

$$t_i - t_{i+1}  \geq  -|t_i - \lambda_i| + (\lambda_i - \lambda_{i+1}) - |t_{i+1} - \lambda_{i+1}| \geq -\epsilon + 3\epsilon - \epsilon = \epsilon.$$

Thus, the $t_i$ form a strictly decreasing sequence of length $N+1$.  A contradiction to the assumption that our sets satisfy condition $1$.\\

To prove rationality, consider the set of fractional parts of $q^k\lambda$ as $k$ varies.  These are all accumulation points by condition $2$.  We have already shown that the number of accumulation points is finite so there must exist $n$ and $m$ such that the fractional parts of $q^n\lambda$ and $q^m\lambda$ are equal.  That is, $(q^n-q^m)\lambda$ is an integer $t$.  This implies $\lambda$ is the rational number $\frac{t}{q^n-q^m}$.
\end{proof}

Similar to the proof of the discreteness and rationality of $F$-jumping exponents in \cite{BMS}, one can deduce the general case for $R$ a regular $F$-finite ring of essentially finite type over $\boldk$, from the case of the polynomial ring.  First, we use the smoothness of $R$ and \ref{etalelocalcor} to reduce to the case of a standard \'etale neighborhood of a polynomial ring.  The result then follows from the case of a polynomial ring by using \ref{quotpro}.

\begin{thm}\label{sltimainthm}
If $R$ is smooth and of essentially finite type over $\boldk$ then the (extended) jumping numbers for the list $r_0,...,r_{q-1}$ are discrete and rational.
\end{thm}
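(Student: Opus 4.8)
The plan is to deduce the statement from the polynomial ring case \ref{polylem} by the same \'etale-local reduction that \cite{BMS} uses to pass from polynomial rings to regular $F$-finite rings. The reduction rests on two observations. First, all the data that defines the jumping numbers — the ideals $I(r_0,\dots,r_{q-1},\lambda,e)$, the simple list test ideals $\tau(r_0,\dots,r_{q-1},\lambda,e)$, hence the sets $S_e$ — is compatible with \'etale localization: by \ref{etalelocalcor} the operations $(-)^{[q^e]}$ and $(-)^{[1/q^e]}$ commute with flat (resp.\ \'etale) base change, so $I(\pi^*r_\bullet,\lambda,e)=\pi^*I(r_\bullet,\lambda,e)$ and likewise for $\tau$ when $\pi$ is \'etale, and \ref{moritaetale} gives the matching statement for the $\bD^e$-module generated by a submodule. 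Second, a strict containment of coherent ideals on the Noetherian scheme $\mathrm{Spec}(R)$ is detected on some member of any finite affine cover; since for $\lambda\in S_e$ the witnessing inclusions $\tau(r_\bullet,\lambda,e)\subsetneq\tau(r_\bullet,\lambda',e)$ hold for \emph{all} $\lambda'>\lambda$ while a finite cover has only finitely many members, one fixed member witnesses all of them, so $\lambda$ is a jumping number of the list on $R$ only if it is one for the pulled-back list on some member of a finite \'etale cover of $\mathrm{Spec}(R)$. It therefore suffices to prove the conclusion \'etale-locally.

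By the structure theory of smooth $\boldk$-algebras, every point of $\mathrm{Spec}(R)$ has an \'etale neighborhood that is a standard \'etale neighborhood of a polynomial ring, i.e.\ of the form $B=(\boldk[x_1,\dots,x_n][y]/(h))_{h'}$ with $h$ monic in $y$ and $h'$ invertible in $B$. Since $h'$ is a unit, multiplying each entry of the list by a power of $h'$ changes neither the ideals it generates nor their fractional powers, so we may assume the list lies in $R'=\boldk[x_1,\dots,x_n][y]/(h)$; and as $B=R'_{h'}$ is a localization of $R'$ at one element, hence \'etale over $R'$, the first paragraph reduces us to the list test ideals of $R'$.

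Finally, $R'=P/(h)$ for the polynomial ring $P=\boldk[x_1,\dots,x_n,y]$, and I would invoke \ref{quotpro} to compare the list test ideals of $R'$ with those of $P$. To secure the monomial-basis hypotheses of that proposition, work near a closed point $\mathfrak m$ of $V(h)\subset\mathrm{Spec}(P)$: because $R$, hence $V(h)$, is smooth, $h\notin\mathfrak m^2P_{\mathfrak m}$, so $h$ extends to a regular system of parameters $h=h_1,h_2,\dots,h_{n+1}$ of $P_{\mathfrak m}$; the map $(h_2,\dots,h_{n+1},h_1)\colon\mathrm{Spec}(P)\to\mathbb A^{n+1}$ has invertible Jacobian at $\mathfrak m$, hence is \'etale on a Zariski neighborhood $U\ni\mathfrak m$, and on $U$ both $\mathcal O(U)$ and $\mathcal O(U)/(h_1)=\mathcal O(V(h)\cap U)$ become \'etale over affine space, so $F^{e+1}_*$ of each is free on the required monomials in the $h_i$ for every $e$. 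Restricting to $U$ (again by the first paragraph) and relabeling $h_1=h$ as the last coordinate, \ref{quotpro} identifies $I(-,\lambda,e)$ and $\tau(-,\lambda,e)$ on $\mathcal O(V(h)\cap U)$ with the corresponding ideals on $\mathcal O(U)$ modulo $(h)$, hence identifies the sets $S_e$, and so the jumping numbers, of the list on $R'$ with those of a lifted list on the polynomial ring. By \ref{polylem} these are finite and rational in $(0,1]$ (in particular $0$ is not an accumulation point, which settles the convention in the definition), and the extended jumping numbers are their integer translates; altogether they form a discrete set of rational numbers.

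I expect the genuine work to be the bookkeeping in the last step: arranging the \'etale coordinate change so that $h$ is literally one of the coordinate functions while keeping the monomial $F^{e+1}_*$-bases of $\mathcal O(U)$ and of its quotient by $h$ in the exact shape \ref{quotpro} demands, and producing, for each $e$, representatives of the list whose expansions in those bases involve no monomial divisible by $h$ — routine but needing to be set up precisely for \ref{quotpro} to apply as stated. The \'etale descent of the first paragraph is comparatively harmless, the only point to verify being that the finiteness of the cover forces a single member to witness every level-$e$ jump.
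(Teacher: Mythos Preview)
Your proof is correct and follows essentially the same route as the paper's: pass to a finite cover by standard \'etale neighborhoods of affine space using \ref{etalelocalcor} so that $S_e(X)=\bigcup_i S_e(U_i)$, reduce to (a localization of) $\boldk[x_1,\dots,x_n][y]/(h)$, invoke \ref{quotpro} to lift to the polynomial ring, and conclude by \ref{polylem}. You supply considerably more detail than the paper on arranging the regular-sequence and monomial-basis hypotheses of \ref{quotpro} via a further \'etale coordinate change near a smooth closed point of $V(h)$; the paper simply asserts that step in one line.
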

\begin{proof}
As $R$ is smooth and of essentially finite type over $\boldk$, we may cover $X = Spec(R)$ by a finite collection of Zariski-open subsets $U_i \rightarrow X$ where each $U_i$ is a standard \'etale neighborhood of an affine open subset of $dim(R)$-dimensional affine space over $\boldk$.
For any map $Z \rightarrow X$ define
$$S_e(Z) = \{ \lambda \in (0,1] | \tau(r_0,...,r_{q-1},\lambda',e)|_Z \neq \tau(r_0,...,r_{q-1},\lambda,e)|_Z \forall \lambda' > \lambda \}.$$

By \ref{etalelocalcor}, it is obvious that $S_e(id:X \rightarrow X) = \cup_iS_e(U_i)$.  Therefore if $\lambda$ is an accumulation point of $\{S_e(id:X \rightarrow X)\}$ then, because the set $\{U_i\}$ is finite, $\lambda$ is an accumulation point of $\{S_e(U_i)\}$ for some $i$.  If $S$ has an infinite number of accumulation points, then for some $i$, $\{S_e(U_i)\}$ must have an infinite number of accumulation points.  Therefore it is enough to prove the lemma for each $U_i$. That is, we may assume $X$ is a standard \'etale neighborhood of an affine open subspace of affine space over $\boldk$.\\

It is enough to prove the theorem when $R$ is the localization of $\boldk[x_1,...,x_n][t]/(f(t))$ with $f(t)$ an irreducible monic polynomial.  By \ref{quotpro}, it is enough to prove the result when $R$ is the localization of a polynomial ring and, thus, enough to prove the result when $R$ is a polynomial ring.  This is \ref{polylem}.\\
 \end{proof}

The next theorem will help us to obtain information about the sets $S_e$ solely from the jumping numbers of $\{S_e\}$.

\begin{thm}
If $R$ is smooth and of essentially finite type over $\boldk$ then there exists an integer $N$ such that for all $e \geq N$
$$S_e \subset \{\frac{\ceiling{\lambda q^{e+1}}- a}{q^{e+1}} | \lambda \text{ is a jumping number for } r_0,...,r_{q-1}, 0 \leq a < q^N \}.$$
\end{thm}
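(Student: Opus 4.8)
The plan is to combine the finiteness of the jumping numbers (Theorem \ref{sltimainthm}) with the shift relation of Corollary \ref{shiftcor} and a boundedness argument like the one in the proof of \ref{polylem}. First I would reduce to the étale-local case exactly as in the proof of \ref{sltimainthm}: cover $X = \mathrm{Spec}(R)$ by finitely many standard étale neighborhoods $U_i$ of open subsets of affine space, note that $S_e(X) = \cup_i S_e(U_i)$ by \ref{etalelocalcor}, and observe that if the claimed inclusion holds on each $U_i$ for $e \geq N_i$ then it holds on $X$ for $e \geq \max_i N_i$ (the set of jumping numbers for $X$ being the union of those for the $U_i$). Using \ref{quotpro} this reduces further to the case where $R$ is a polynomial ring, where \ref{genbounds} and the bounded-length argument of \ref{polylem} give a uniform bound $L = \mathrm{length}(W)+1$ on $|S_e|$ and, more importantly, a uniform bound on the length of strictly decreasing sequences drawn from $\cup_e S_e$ (in the sense defined there).

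The heart of the argument is the following. By \ref{shiftcor}(1), if $\mu \in S_e$ with $\mu \neq a/q$ then $\mu$ is within $q^{-e}$ of an element of $S_{e-1}$: indeed the proof of \ref{shiftpro} shows that when $q^e \nmid (\text{numerator of } \mu \cdot q^{e+1})$, the fractional part of $q\mu$ lies in $S_{e-1}$, and $\mu$ and (that fractional part)$/q$ differ by an integer over $q^{e+1}$... more precisely, one iterates: starting from $\mu \in S_e$, repeatedly apply the shift map $\mu \mapsto \{q\mu\}$ (passing from $S_j$ to $S_{j-1}$) as long as the divisibility hypothesis holds. So I would track, for each $\mu \in S_e$, the maximal chain $\mu = \mu_e, \mu_{e-1}, \ldots$ it generates. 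The key dichotomy: either this chain reaches a stage where the divisibility fails — meaning $\mu$ has the form $(\text{integer})/q^{k}$ relative to a smaller exponent and agrees with $\ceil{\lambda q^{e+1}}$ for a genuine jumping number $\lambda$ up to a bounded error — or the chain descends all the way. The point is to show the chain can descend at most $N$ steps before it must hit something controlled by an actual accumulation point, where $N$ depends only on $L$ and the (finite) number of jumping numbers.

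To make this quantitative I would argue as follows. Suppose toward a contradiction that for infinitely many $e$ there is $\mu_e \in S_e$ not of the form $\frac{\ceil{\lambda q^{e+1}} - a}{q^{e+1}}$ for any jumping number $\lambda$ and any $0 \le a < q^N$. Applying the shift map repeatedly $N$ times produces $\mu_e \leadsto \mu_{e}^{(N)} \in S_{e-N}$, and the failure of the ``$a < q^N$'' condition forces the divisibility hypothesis of \ref{shiftpro} to hold at every one of the $N$ steps (if it failed at step $j < N$, unwinding would express $\mu_e$ in the excluded form with $a < q^j < q^N$). Now extract a subsequence of such $e$ along which $\mu_e \to \lambda_\infty$; then $\lambda_\infty$ is an accumulation point of $\{S_e\}$, i.e.\ a jumping number, and — because the shift was applied cleanly — $\mu_e$ is within $q^{-(e-N+1)}$ of $\frac{\ceil{\lambda_\infty q^{e+1}}}{q^{e+1}}$ once $e$ is large, so for $N$ chosen larger than the convergence rate (finitely many jumping numbers, so uniform) we get $\mu_e = \frac{\ceil{\lambda_\infty q^{e+1}} - a}{q^{e+1}}$ with $a < q^N$, a contradiction. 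The main obstacle I anticipate is bookkeeping the interaction between ``integer translates'' (the extended sense) and the fractional-part operation in \ref{shiftcor}, and making sure the single integer $N$ works simultaneously for the finitely many jumping numbers and for all the $U_i$ — but since both collections are finite, taking a maximum at the end should close it.
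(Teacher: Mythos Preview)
Your proposal has a genuine gap at the final step. After extracting a convergent subsequence $\mu_e \to \lambda_\infty$, you assert that ``$\mu_e$ is within $q^{-(e-N+1)}$ of $\frac{\lceil\lambda_\infty q^{e+1}\rceil}{q^{e+1}}$ once $e$ is large.'' But convergence along a subsequence gives no rate: you only know $|\mu_e - \lambda_\infty| \to 0$, not $|\mu_e - \lambda_\infty| = O(q^{-e})$. Iterating the shift does not help here, since $\mu \mapsto \{q\mu\}$ is \emph{expanding}: knowing $\mu_e^{(N)} \in S_{e-N}$ does not squeeze $\mu_e$ toward $\lambda_\infty$ unless you already control $|\mu_e^{(N)} - \{q^N\lambda_\infty\}|$, which is the same problem one level down. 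Your auxiliary claim that ``failure of the $a<q^N$ condition forces the divisibility hypothesis of \ref{shiftpro} to hold at each step'' is also unjustified: that divisibility condition concerns the low base-$q$ digits of the numerator of $\mu_e$ and has nothing to do with its distance to any jumping number, so failure at step $j$ does not put $\mu_e$ in the excluded form. Finally, $N$ is never actually chosen; the contradiction argument presupposes a fixed $N$ without saying what it is.

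The paper's proof supplies exactly what is missing, in two stages. First (its Claim 1), using the no-infinite-decreasing-sequence property, it proves a \emph{one-sided} estimate: for suitably small $\epsilon$ there is $E_\epsilon$ such that every $s \in S_e$ with $e \geq E_\epsilon$ satisfies $0 \leq \lambda - s < \epsilon$ for a unique jumping number $\lambda$. The one-sidedness is essential and is forced by the decreasing-sequence bound (if $s$ sat slightly \emph{above} its nearest jumping number along a subsequence, one would manufacture a forbidden decreasing sequence). Second (its Claim 2), this is bootstrapped by \emph{induction on $e$}: assuming every $s' \in S_{e-1}$ satisfies $0 \leq \lambda' - s' < \epsilon q^{-(e-1)+N_\epsilon}$, one takes $s \in S_e$, applies \ref{shiftpro} to place $\{qs\}$ in $S_{e-1}$, combines the inductive bound on $\{qs\}$ with the crude $\epsilon$-bound on $s$ itself and with \ref{shiftcor} applied to the jumping number near $s$, and solves to get $0 \leq \lambda - s < \epsilon q^{-e+N_\epsilon}$. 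In other words, the shift is exploited \emph{backwards}: the contraction by $q$ is harvested by passing from control of $S_{e-1}$ to control of $S_e$, whereas your argument runs the shift forward and therefore cannot gain the factor $q^{-1}$ per step. Your \'etale-local reduction to the polynomial case is fine but unnecessary; the no-decreasing-sequence input is already available in general from the proof of \ref{sltimainthm}.
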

\begin{proof}
The arguments from \ref{polylem} imply that there are no infinite sequences $s_n \in S_{e_n}$ with $s_{n+1} < s_{n}$ and $e_n \geq e_{n+1}$. \\

Claim 1:  For any $\epsilon >0$ such that all the jumping numbers are distance at least $3\epsilon$ away from each other, there exists $E_{\epsilon}$ such that for all $e \geq E_{\epsilon}$ and all $s \in S_e$ there exists a unique jumping number $\lambda$ with $0 \leq \lambda -s < \epsilon$.  Note that such an $\epsilon$ exists because the jumping numbers in $(0,1]$ are a finite set.\\

To prove this claim, we proceed by contradiction.  Suppose that for every $E=n$ we could find $e_n > E$ such that for some $s_n \in S_{e_n}$ every jumping number is either more than $\epsilon$ away from $s_n$ or is within distance $\epsilon$ of $s_n$ but is smaller than $s_n$.  By passing to a subsequence, we may assume the sequence $s_n$ is either always in an $\epsilon$-neighborhood of a jumping number $\lambda_0 < s_n$ or is always at least $\epsilon$ away from any jumping number.  By passing to a further subsequence, we may assume $s_n$ converges in $[0,1]$.  Now in the first case, as all jumping numbers are at least distance $3\epsilon$ apart, it must be that the numbers converge to $\lambda_0$.  However, this means that $s_n$ has a strictly decreasing subsequence which can not happen by the proof of \ref{polylem}.  By invoking the proof of \ref{polylem}, it is not possible that the sequence $s_n$ converges to $0$.  Thus in the second case, $s_n$ converges to a jumping number (by definition) yet is at least $\epsilon$ away from all other jumping numbers.  This is also a contradiction and concludes the proof of the first claim.\\

We claim we can make this statement even stronger.\\

Choose $N' \geq 2$ such that any jumping number $\lambda$ with $q\lambda$ not an integer is at least $q^{-N'-1}$ away from any number of the form $\frac{a}{q}$ with $0 \leq a \leq q$.\\

Claim 2: For every $\epsilon \in (0,q^{-N'})$ with all jumping numbers at least distance $3q\epsilon$ apart from each other, $0$, and $1$ (if it is not already a jumping number) then there exists $N_{\epsilon}$ such that for all $e \geq N_{\epsilon}$ and $s \in S_e$ there exists a unique jumping number $\lambda$ with $0 \leq \lambda - s < \epsilon q^{-e+N_{\epsilon}}$.\\

Fix $\epsilon$ as in the hypothesis of claim 2 and choose $E_{\epsilon}$ as in claim 1.   Set $N_{\epsilon} = max\{E_{\epsilon}, N'\}$ and proceed by induction on $e \geq N_{\epsilon}$.\\
  
Base case:  If $e = N_{\epsilon}$ then we are done because $e = N_{\epsilon} \geq E_{\epsilon}$ and $s \in S_e$ implies there exists a unique jumping number $\lambda$ with $0 \leq \lambda - s < \epsilon = \epsilon q^{-e+N_{\epsilon}}$.\\

Inductive step:  Assume for all $s \in S_{e-1}$ ($e-1 \geq N_{\epsilon} \geq E_{\epsilon})$ there exists a unique jumping number $\lambda$ such that $0 \leq \lambda -s < \epsilon q^{-e+1+N_{\epsilon}}$.\\

Fix $s \in S_e$.  By claim 1, there exists a unique jumping number $\lambda'$ with $0 \leq \lambda' -s < \epsilon$.  We break into cases.\\

Case 1:  $qs$ is an integer.\\

Write $s=\frac{a}{q}$ for some $0 < a < q^{e+1}$ then we have $0 \leq q\lambda'-a < q\epsilon$. Thus, $ 0 \leq fr(q\lambda') = fr(q\lambda'-a) \leq q\epsilon$ where $fr$ denotes the fractional part.  Hence, $q\lambda'$ cannot be a jumping number since all jumping numbers are distance at least $3q\epsilon$ away from $0$.  By \ref{shiftcor} we are left to conclude that $q\lambda'$ is an integer which is $q\epsilon$ away from $a$.  By choice of $\epsilon < q^{-N'}$ and $N' \geq 2$, $q\lambda' = a$ and $\lambda = s$.\\

Case 2: $qs$ is not an integer.\\

By inductive hypothesis there exists a unique jumping number $\lambda$ with $0 \leq \lambda - fr(qs) \leq \epsilon q^{-e+1+N_{\epsilon}}$.\\

It is enough to show that $s=\lambda'-\frac{\lambda}{q}+fr(qs)/q$ because then the result follows from direct computation.\\

If $q\lambda'$ is an integer then,
$$0 \leq fr(q\lambda'-qs) = 1-fr(qs) \leq q\epsilon$$
implies by choice of $\epsilon$ that $\lambda =1$.  
If $q\lambda'$ is not an integer then by \ref{shiftthm} and choice of $\epsilon$, $fr(q\lambda')=\lambda$.  In either situation, we recover that $q\lambda' - \lambda = a$ for some integer $a$.\\

Hence, $$-q\epsilon \leq q\lambda'-\lambda-(qs-fr(qs)) = a -qs+fr(qs) \leq q\epsilon$$ and the middle term is an integer.  Since $\epsilon < q^{-2}$, $a=qs-fr(qs)$ and we are done with the proof of the second claim.\\

To finish the proof of this theorem, choose $\epsilon$ meeting the criteria of claim 2 and choose $N \geq N_{\epsilon}$ so that $q^{N} > \ceiling{\epsilon q^{N_{\epsilon+1}}}$.  For $e \geq N$ and $s=\frac{m}{q^{e+1}} \in S_e$, we have $sq^{e+1} \leq \lambda q^{e+1} < sq^{e+1} + \epsilon q^{N_{\epsilon}+1}$.  Thus, 
\begin{eqnarray*}
m &=& \ceiling{sq^{e+1}}\\
&\leq& \ceiling{\lambda q^{e+1}}\\
&\leq& \ceiling{sq^{e+1}} + \ceiling{\epsilon q^{N_{\epsilon} + 1}}\\
&<& m + q^{N}
\end{eqnarray*}

We conclude, $\ceiling{\lambda q^{e+1}} \geq m > \ceiling{\lambda q^{e+1}}-q^N$.

\end{proof}

\subsection{List test modules}

We will now consider a generalization of simple list test ideals called list test modules.\\

\begin{df} Let $\{A_{k,n}\}_{k,n \geq 0}^{\infty,q-1} \in M_l(R)$ be a double-indexed set of matrices with only finitely many nonzero.  These matrices act on the free module $M=R^{\oplus l}$. For a matrix $A(t) \in M_{l}(R[t])$ write $A(t)^{[q]}$ for the matrix whose entries are the $q^{th}$ power of the entries of $A(t)$.  If we think of $A(t)$ as an $R(t)$-linear map $A(t): R[t]^{\oplus l} \rightarrow R[t]^{\oplus l} \cong F_{R[t]}^*(R[t]^{\oplus l})$ then $A(t)^{[q]} = Fr^*_{R[t]}(A(t))$.  To ease notation, the indexing set will be expanded to all of $\mathbb{Z} \times \mathbb{Z}$ by setting the undefined indices to be the zero matrix.  We will also denote by $A(t)^{e}$ the composition $A(t)^{[q^{e}]}...A(t)$.\\

Define polynomials valued in $M_l(R)$ inductively on $e$ by setting

$$H^1_n(\tau) = \sum_k A_{k,n}\tau^{k},$$
$$A(t) = \sum_{0\leq n < q} H^1_n(t^q)t^n,$$
and defining $H^{e}_n(\tau)$ to be the unique polynomials such that 
$$A(t)^{e-1} = \sum_{0 \leq n < q^{e}-1} H^{e}_n(t^{q^{e}})t^n.$$

The next lemma shows that there exists a unique minimal $N$ such that for all $e$ and $n$ 
$$H^{e}_n(\tau) \in M_n(R) \oplus M_n(R)\tau \oplus ... \oplus M_n(R) \tau^N.$$

For each $e \geq 0$ and $\lambda \in (0,1]$ define the list test module as

$$\tau(\{A_{k,j}\}, \lambda, e) = \sum_{\lambda' \leq \lambda}(H^{e+1}_{\ceiling{\lambda'q^{e+1}}-1}(\tau)R^{\oplus l})^{[\frac{1}{q^{e+1}}]} \subset R^{\oplus lN} \cong R^{\oplus l } \oplus R^{\oplus l}\tau \oplus ... \oplus R^{\oplus l}\tau^N.$$

\end{df}

\begin{lem}If $d = deg_t(A(t))$ then
$$deg_{\tau}H^e_n(\tau) \leq \frac{d}{q-1}$$
for all $e$ and $n$.
\end{lem}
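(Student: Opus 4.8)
The plan is to bound the $t$-degree of the matrix $A(t)^{e-1}$ and then transfer that bound to $\deg_\tau H^e_n$ by reading off the defining decomposition $A(t)^{e-1} = \sum_{0\le n< q^e} H^e_n(t^{q^e})t^n$.

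First I would note that $A(t)^{e-1}$ is the product of the $e$ matrices $A(t), A(t)^{[q]}, A(t)^{[q^2]}, \dots, A(t)^{[q^{e-1}]}$. Raising every entry of a matrix over $R[t]$ to the $q^j$-th power multiplies its $t$-degree by at most $q^j$, so $\deg_t A(t)^{[q^j]} \le q^j d$; since also $\deg_t(BC)\le \deg_t B + \deg_t C$ for matrices over $R[t]$, this gives
\[
\deg_t A(t)^{e-1} \;\le\; d\bigl(1 + q + \dots + q^{e-1}\bigr) \;=\; d\,\frac{q^e-1}{q-1}.
\]

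Second, I would unwind the decomposition. For a polynomial $p(t)=\sum_j c_j t^j$, writing each exponent as $j = n + q^e m$ with $0\le n< q^e$ sorts the monomials of $p$ by their residue modulo $q^e$ and expresses $p$ uniquely as $\sum_n p_n(t^{q^e})\,t^n$ with $p_n(\tau)=\sum_m c_{n+q^e m}\tau^m$; applying this entrywise to $A(t)^{e-1}$ produces the matrices $H^e_n$. If $H^e_n\ne 0$, choose an entry of $H^e_n$ and an index $m=\deg_\tau H^e_n$ with nonzero coefficient there; that coefficient is precisely the coefficient of $t^{q^e m + n}$ in the corresponding entry of $A(t)^{e-1}$, and since monomials coming from different values of $n$ lie in different residue classes modulo $q^e$ they cannot cancel one another. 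Hence that entry of $A(t)^{e-1}$ genuinely has $t$-degree at least $q^e\deg_\tau H^e_n + n$, so $q^e\deg_\tau H^e_n + n \le \deg_t A(t)^{e-1}$.

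Combining the two bounds gives $q^e\deg_\tau H^e_n \le d\frac{q^e-1}{q-1}$, hence $\deg_\tau H^e_n \le \frac{d}{q-1}\bigl(1-q^{-e}\bigr) < \frac{d}{q-1}$; since $\deg_\tau H^e_n$ is an integer this in fact bounds it by $\lfloor d/(q-1)\rfloor$ uniformly in $e$ and $n$, which is exactly what is needed to pick off the minimal $N$ referred to just before the lemma. I do not expect any real obstacle here; the only step deserving a word of care is the non-cancellation observation in the third paragraph --- that the residue-mod-$q^e$ bookkeeping exhibits each $H^e_n$ as a genuine component of $A(t)^{e-1}$ rather than something that could be annihilated by interference with the other components --- and this is immediate from the uniqueness of that decomposition.
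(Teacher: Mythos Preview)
Your argument is correct and follows essentially the same route as the paper: bound $\deg_t A(t)^{e-1}$ by the geometric sum $d(q^e-1)/(q-1)$, then read off $q^e\deg_\tau H^e_n + n \le \deg_t A(t)^{e-1}$ from the defining decomposition and divide by $q^e$. You are simply more explicit than the paper about the non-cancellation point (uniqueness of the residue-mod-$q^e$ decomposition) and about passing to the integer floor, but the underlying argument is the same.
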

\begin{proof} Recall the definition of the polynomials $H^e_n(\tau)$ as the coefficients in the expansion
$$A(t)^{e-1} = A(t)^{[q^{e-1}]}...A(t) = \sum_{n=0}^{q^{e}-1} H^{e}_n(t^{q^{e}})t^n.$$

The left side of this equation has $t$-degree at most $\frac{d(q^e-1)}{q-1}$.  Therefore
$$deg_{\tau}(H_n^e(\tau))q^e \leq deg_{\tau}(H_n^e(\tau))q^e + n = deg_t(H_n^e(t^{q^e})t^n) \leq deg_tA(t)^{e-1} \leq \frac{dq^e}{q-1}.$$
Dividing by $q^e$ yields the desired result.

\end{proof}

\begin{exa}\label{cyclicevexa} If $\{A_{k,n}\}$ be a list of matrices with the following properties,
\begin{enumerate}
\item $A_{k,n}=0$ for all $k > 0$
\item $A_{k,n} \in M_1(R)=R$ (i.e. $l=1$)
\end{enumerate}
and we set $r_i=A_{0,i}$ then $\tau(\{A_{k,n}\},\lambda,e) = \tau(r_0,r_1,...,r_{q-1},\lambda,e)$ for all $\lambda$ and $e$ where the latter ideal is the simple list test ideal.
\end{exa}
\begin{proof}
First let us prove the formula
$$H^e_{i_0+i_1q+...+i_{e-1} q^{e-1}}(\tau) = r_{i_0}r_{i_1}^q...r_{i_{e-1}}^{q^{e-1}}$$
by induction on $e$.

When $e =1$ the formula is clear.  Suppose the formula is true for $e$, that is assume
$$A^{e-1}= \sum_{i_0+i_1q+...+i_{e-1}q^{e-1}}r_{i_0}r_{i_1}^q...r_{i_{e-1}}^{q^{e-1}}t^{i_0+i_1q+...+i_{e-1}q^{e-1}}$$

This implies

$$A^{e} = \sum_{i_0+i_1q+...i_{e-1}q^{e-1}} \sum_{i_{e}}r_{i_0}r_{i_1}^q...r_{i_{e-1}}^{q^{e-1}}t^{i_0+i_1q+...+i_{e-1}q^{e-1}}r_{i_e}^{q^e}t^{i_{e}q^{e}},$$

which completes the proof by induction.

Plugging this formula back into the definition and taking $N=1$ in the definition of list test ideals we obtain that $\tau(\{A_{k,n}\},\lambda,e) = \tau(r_0,...,r_{q-1},\lambda,e)$.
\end{proof}

\begin{df} Analogously to the simple case, define
$$S_e = \{\lambda| \tau(\{A_{k,n}\},\lambda, e) \neq \tau(\{A_{k,n}\},\lambda', e) \forall \lambda' > \lambda\},$$
and the \textbf{set of jumping numbers of} $\{A_{k,n}\}$ to be the non-zero accumulation points of $\{S_e\}$.  This definition is extended to all real numbers by defining $\lambda$ to be a jumping number if some integer translate of $\lambda$ is a jumping number.
\end{df}

It will be shown in this section that this set of jumping numbers is again discrete and rational.  First, it will be necessary to generalize the results from the case of simple list test ideals.

\begin{pro}\label{generalshiftpro} If $\lambda$ is a jumping number for $\{A_{k,n}\}$ then either $q\lambda$ is an integer or $q\lambda$ is also a jumping number for $\{A_{k,n}\}$.  In particular, if $q\lambda$ is not an integer then the fractional part of $q\lambda$ is a jumping number for $\{A_{k,n}\}$.
\end{pro}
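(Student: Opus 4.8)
I would reproduce, in the matrix–polynomial setting, the two–step argument that produced Corollary~\ref{shiftcor} from Proposition~\ref{shiftpro} in the simple case: first a ``descent'' statement at the level of the finite sets $S_e$ (the analogue of \ref{shiftpro}), and then a limiting argument that extracts the statement about jumping numbers (the analogue of \ref{shiftcor}). The whole point of the setup — the degree lemma fixing a single $N$ with $H^e_n(\tau)\in M_l(R)\oplus M_l(R)\tau\oplus\cdots\oplus M_l(R)\tau^N$ — is precisely to make the list test modules live inside one fixed free module $R^{\oplus l}\oplus R^{\oplus l}\tau\oplus\cdots\oplus R^{\oplus l}\tau^N$, so that Theorems~\ref{basistypethm} and \ref{structurethm} (which already hold for submodules of a free module) can be applied exactly as the ideal versions were applied in the simple proof.

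\textbf{Step 1 (descent lemma).} The one new ingredient is a recursion expressing $H^{e+1}_n(\tau)$ in terms of the $H^e_{n'}(\tau)$. From $A(t)^e = A(t)^{[q^e]}A(t)^{e-1}$, substituting $A(t)^{[q^e]} = \sum_{0\le m<q}H^1_m(t^{q^{e+1}})^{[q^e]}t^{mq^e}$ and $A(t)^{e-1} = \sum_{0\le n'<q^e}H^e_{n'}(t^{q^e})t^{n'}$ and collecting the coefficient of $t^n$ for $n = bq^e + n'$ with $0\le b<q$ and $0\le n'<q^e$ (absorbing the carries coming from the positive $\tau$-degree of the $H^e_{n'}$), I would obtain a formula exhibiting the submodule $H^{e+1}_n(\tau)R^{\oplus l}$ of the ambient truncation in terms of $H^1_m(\tau)^{[q^e]}$ applied to the submodules $H^e_{n'}(\tau)R^{\oplus l}$. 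This is the matrix refinement of the identity $H^{e+1}_{i_0+\cdots+i_eq^e}(\tau) = H^e_{i_0+\cdots+i_{e-1}q^{e-1}}(\tau)\,r_{i_e}^{q^e}$ from Example~\ref{cyclicevexa}, now with $H^1_{i_e}(\tau)^{[q^e]}$ in place of the monomial $r_{i_e}^{q^e}$ and an extra $\tau$-power of carry. Granting this, the proof of Proposition~\ref{shiftpro} transcribes essentially verbatim: if $\lambda=\tfrac{m}{q^{e+1}}\in S_e$ with $q^e\nmid m$, write $m-1 = i_0+i_1q+\cdots+i_eq^e$, put $\lambda_0 = \mathrm{fr}(\tfrac{m-1}{q^e})$, and suppose for contradiction $\tau(\{A_{k,j}\},\lambda_0,e-1)=\tau(\{A_{k,j}\},\lambda_0+q^{-e},e-1)$; applying the identity $(I^{[1/q^e]})^{[q^e]}=\bD^e_RI$ of Theorem~\ref{structurethm} (now for submodules of the free module, via Theorem~\ref{basistypethm}), multiplying by the extra factor $H^1_{i_e}(\tau)^{[q^e]}$, reindexing by the top digit, applying Theorem~\ref{structurethm} a second time, and using faithful flatness of Frobenius yields $\tau(\{A_{k,j}\},\tfrac{m}{q^{e+1}},e)\subset\tau(\{A_{k,j}\},\tfrac{m-1}{q^{e+1}},e)$, contradicting $\lambda\in S_e$. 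Hence $\mathrm{fr}(\tfrac{m-1}{q^e})\in S_{e-1}$.

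\textbf{Step 2 (passage to jumping numbers).} Let $\lambda$ be a jumping number for $\{A_{k,n}\}$ with $q\lambda\notin\mathbb{Z}$; by definition there is a strictly increasing sequence $(e_n)$ and $\lambda_{e_n}=\tfrac{m_n}{q^{e_n+1}}\in S_{e_n}$ with $\lambda_{e_n}\to\lambda$. If $q^{e_n}\mid m_n$ for all large $n$, write $m_n=a_nq^{e_n}$ and divide by $q^{e_n}$ to get $q\lambda=\lim a_n\in\mathbb{Z}$, a contradiction; so after passing to a subsequence $q^{e_n}\nmid m_n$, and Step~1 gives $\mathrm{fr}(\tfrac{m_n}{q^{e_n}})\in S_{e_n-1}$ for all $n$. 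Since $\tfrac{m_n}{q^{e_n}}\to q\lambda$ in $\mathbb{R}$ and $\overline{q\lambda}\neq 0$ in $\mathbb{R}/\mathbb{Z}$, the fractional parts converge to $\mathrm{fr}(q\lambda)\in(0,1)$, so $\mathrm{fr}(q\lambda)$ is a nonzero accumulation point of $\{S_e\}$, i.e.\ a jumping number; hence $q\lambda$ is a jumping number in the extended sense. The ``in particular'' clause is this conclusion rephrased.

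\textbf{Main obstacle.} Everything outside Step~1 is a mechanical copy of the simple-case proofs, so the real work is the recursion in Step~1. In the simple case $H^e_{i_0+\cdots+i_{e-1}q^{e-1}}(\tau)$ is a single monomial and appending one digit is transparent; here the $H$'s carry $\tau$-degree up to $N$, so the passage $A(t)^{e-1}\rightsquigarrow A(t)^e=A(t)^{[q^e]}A(t)^{e-1}$ intertwines the $\tau$-grading with the base-$q$ digits of the $t$-exponent and produces carries. The delicate point is to check that these carries stay inside the fixed truncation $R^{\oplus l}\oplus\cdots\oplus R^{\oplus l}\tau^N$ and that the resulting submodule identity is compatible with the operations $(-)^{[1/q^{e+1}]}$ and $\bD^{e+1}_R(-)$, so that the containment manipulation of \ref{shiftpro} still closes up. Once that compatibility is pinned down — it is forced by the degree lemma that fixes $N$ — the remainder is identical to the arguments of Proposition~\ref{shiftpro} and Corollary~\ref{shiftcor}.
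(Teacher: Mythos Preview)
Your plan is correct and matches the paper's: the proof derives the recursion $H^e_{\beta+j_0q^{e-1}}(\tau)=\sum_{j_1}\sum_n (H^1_n)^{[q^{e-1}]}(\tau)\,B_{j_0+j_1q-n,\beta}\,\tau^{j_1}$ (where $H^{e-1}_\beta(\tau)=\sum_k B_{k,\beta}\tau^k$) from the factorization $A(t)^{e-1}=A(t)^{[q^{e-1}]}A(t)^{e-2}$, then runs exactly the contrapositive argument of \ref{shiftpro}, with your Step~2 left implicit as an unchanged copy of \ref{shiftcor}. The point you flag under ``Main obstacle'' is precisely what the paper pins down: the ``extra factor'' is not a single $H^1_{i_e}(\tau)^{[q^e]}$ but this double sum, and what makes the containment close up is that the test-module equality at level $e-2$, read $\tau$-coefficientwise in the ambient free module, yields operators $P_{\beta'}\in\bD^{e-1}_R$ with $\sum_{\beta'<\beta}P_{\beta'}B_{k,\beta'}v_{\beta'}=B_{k,\beta}v$ \emph{uniformly in $k$}, which then commute past every $(H^1_n)^{[q^{e-1}]}(\tau)$ since its entries are $q^{e-1}$-th powers.
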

\begin{proof}
As before we proceed by contraposition and first notice that
if we write
$$H^{e-1}_{\beta}(\tau) = \sum_k B_{k,\beta}\tau^k$$ 
for any $0 \leq \beta < q^{e-1}$ then
\begin{eqnarray*}
A^{[q^{e-1}]}(t)...A(t) &=& \sum_n \sum_k \sum_{\beta} (H^1_n)^{[q^{e-1}]}(t^{q^e})t^{nq^{e-1}}B_{k,\beta}t^{kq^{e-1}}t^{\beta}\\
&=& \sum_j \sum_{n+k=j} \sum_{\beta}(H^1_n)^{[q^{e-1}]}(t^{q^e})B_{k,\beta}t^{\beta+jq^{e-1}}\\
&=& \sum_{0 \leq j_0 <q} \sum_{j_1} \sum_{n+k=j_0+j_1q} \sum_{\beta} (H^1_n)^{[q^{e-1}]}(t^{q^e})B_{k,\beta}t^{\beta+j_0q^{e-1}+j_1q^e}\\
&=& \sum_{0 \leq j_0 <q} \sum_{j_1} \sum_{n} \sum_{\beta} (H^1_n)^{[q^{e-1}]}(t^{q^e})B_{j_0+j_1q-n,\beta}t^{\beta+j_0q^{e-1}+j_1q^e}\\
\end{eqnarray*}
where $G^{[q^{e-1}]}(\tau)$ is the polynomial whose coefficients are the $[q^{e-1}]$ powers of the coefficients of $G$.\\

This implies
$$\text{ \hspace{-30pt} }(*) \text{ \hspace{30pt} }H^{e}_{\beta+j_0q^{e-1}}(\tau) = \sum_{j_1} \sum_{n} (H^1_n)^{[q^{e-1}]}(\tau)B_{j_0+j_1q-n,\beta}\tau^{j_1}.$$

By flatness of Frobenius and \ref{structurethm}, for any $e$ and $0 < \beta < q^{e-1}$, 
$$\tau(\{A_{k,n}\}, \frac{\beta+1}{q^{e-1}}, e-2) = \tau(\{A_{k,n}\}, \frac{\beta}{q^{e-1}}, e-2)$$
if and only if for every $v \in R^{\oplus l}$
$$H^{e-1}_{\beta}(\tau)v \in \sum_{\beta' < \beta} \bD^{e-1}_RH^{e-1}_{\beta'}(\tau) R^{\oplus l}.$$

In particular, if $\tau(\{A_{k,n}\}, \frac{\beta+1}{q^{e-1}}, e-2) = \tau(\{A_{k,n}\}, \frac{\beta}{q^{e-1}}, e-2)$ then there exists $\{P_{\beta'}\} \subset \bD^{e-1}_R$ and $v_{\beta'} \in R^{\oplus l}$ such that
$$\sum_{\beta' < \beta}P_{\beta'}B_{k,\beta'}v_{\beta'} = B_{k,\beta}v$$ for all $k$.\\

If $0 < \alpha < q^e$, we may write $\alpha = \beta + j_0q^{e-1}$ with $0 < \beta < q^{e-1}$, then
\begin{eqnarray}
H^{e}_{\alpha}(\tau)v &=& \sum_{j_1} \sum_{n=j_0+j_1q} (H^1_n)^{[q^{e-1}]}(\tau)B_{j_0+j_1q-n,\beta}\tau^{j_1}v\\
&=& \sum_{j_1} \sum_{n=j_0+j_1q} (H^1_n)^{[q^{e-1}]}(\tau)\sum_{\beta'< \beta}P_{\beta'}B_{j_0+j_1q-n,\beta'}v_{\beta'}\tau^{j_1}\\
&=& \sum_{\beta'< \beta}P_{\beta'} \sum_{j_1} \sum_{n=j_0+j_1q} (H^1_n)^{[q^{e-1}]}(\tau)B_{j_0+j_1q-n,\beta'}v_{\beta'}\tau^{j_1}\\
&=& \sum_{\beta'< \beta}P_{\beta'}H^1_{\beta' + j_0q^{e-1}}(\tau)
\end{eqnarray}

where the equality between $(1)$ and $(2)$ follows from $(*)$ and the equality between $(2)$ and $(3)$ follows because $P_{\beta}$ acts on $R^{\oplus l}$ through the diagonal action on each entry of $R$ and is linear with respect to $q^{e-1}$ powers by virtue of being in $\bD^{e-1}_R$.

Therefore, $$\tau(\{A_{k,n}\}, \frac{\beta+\gamma q^{e-1}+1}{q^{e}}, e-1) = \tau(\{A_{k,n}\}, \frac{\beta+\gamma q^{e-1}+1}{q^{e}}, e-1) .$$

\end{proof}

The proofs of the remaining statements follow from direct and obvious modification of the simple list test ideal case and replacing \ref{shiftpro} with the above proposition.  As such, they will not be restated.

\begin{cor} \
\begin{enumerate}
\item If $\lambda \in (0,1]$ is a jumping number for $\{A_{k,n}\}$ then either the fractional part of $q\lambda$ is a jumping number or $\lambda = \frac{a}{q}$ with $0 \leq a < q$.
\item If $\lambda$ is a jumping number in the extended sense then either $q\lambda$ is a jumping number or $q\lambda$ is an integer.  In particular, if $q\lambda$ is not an integer then the fractional part of $q\lambda$ is a jumping number.
\end{enumerate}
\end{cor}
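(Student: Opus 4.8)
The plan is to transcribe, into the setting of list test modules, the proof of Corollary~\ref{shiftcor} for simple list test ideals, using Proposition~\ref{generalshiftpro} as the substitute for Proposition~\ref{shiftpro}. Nothing in that argument used the one--dimensionality of the ambient module: once the $S_e$--level shift statement is available, the reasoning is purely about accumulation points of the family $\{S_e\}$ and transfers essentially verbatim; I sketch it for completeness. For part~(1), let $\lambda \in (0,1]$ be a jumping number which is not of the form $\frac{a}{q}$ with $0 < a \le q$, and aim to show that the fractional part of $q\lambda$ is a jumping number. Since $\lambda$ is a nonzero accumulation point of $\{S_e\}$, choose $m_n/q^{e_n+1} \in S_{e_n}$ with $e_n \to \infty$ and $m_n/q^{e_n+1} \to \lambda$. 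The first step is to refine this so that $q^{e_n} \nmid m_n$ for every $n$: if that were impossible, then after discarding finitely many terms we could write $m_n = a_n q^{e_n}$ with $0 < a_n < q$, and dividing $m_n/q^{e_n+1} \to \lambda$ by $q^{e_n}$ would give $a_n \to q\lambda$, forcing the integers $a_n$ to be eventually equal to $q\lambda$ and hence $\lambda = \frac{a}{q}$ with $0 < a \le q$, a contradiction. This refinement is the only place the hypothesis on $\lambda$ is used.

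Granted such a sequence, Proposition~\ref{generalshiftpro}, in the $S_e$--level form that plays the role of~\ref{shiftpro}, shows that the fractional part of $m_n/q^{e_n}$ lies in $S_{e_n-1}$ for all $n$. Since $m_n/q^{e_n} \to q\lambda$ in $\mathbb{R}$, the corresponding classes converge in $\mathbb{R}/\mathbb{Z}$ to $\overline{q\lambda}$; and since the fractional part of $q\lambda$ is nonzero (again because $\lambda \ne \frac{a}{q}$), it is the unique representative of $\overline{q\lambda}$ in $(0,1]$, so the fractional parts of $m_n/q^{e_n}$ converge to it. As $e_n - 1 \to \infty$, this exhibits the fractional part of $q\lambda$ as a nonzero accumulation point of $\{S_e\}$, i.e.\ a jumping number, proving~(1). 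Part~(2) is then a formal consequence: if $\lambda$ is a jumping number in the extended sense, write $\lambda = \lambda_0 + j$ with $\lambda_0 \in (0,1]$ a jumping number and $j \in \mathbb{Z}$; if $q\lambda$ is not an integer then $q\lambda_0 = q\lambda - qj$ is not an integer, so $\lambda_0 \ne \frac{a}{q}$ with $0 < a \le q$, whence by~(1) the fractional part of $q\lambda_0$ is a jumping number, $q\lambda_0$ is a jumping number in the extended sense, and so is $q\lambda = q\lambda_0 + qj$. The stated dichotomy (``$q\lambda$ an integer or $q\lambda$ a jumping number'') and the ``in particular'' clause follow at once.

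The argument is formal once Proposition~\ref{generalshiftpro} is in hand, so I expect the real difficulty to lie in that proposition rather than in the corollary; within the corollary, the only genuinely delicate point is the refinement step in part~(1), where one must arrange $q^{e_n} \nmid m_n$ while keeping $e_n \to \infty$, and where the hypothesis $\lambda \ne \frac{a}{q}$ is essential. A secondary bookkeeping hazard is conflating accumulation points in $\mathbb{R}$ with those in $\mathbb{R}/\mathbb{Z}$; the nonvanishing of the fractional part of $q\lambda$ is precisely what lets one pass cleanly between the two.
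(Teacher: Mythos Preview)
Your proposal is correct and follows exactly the approach the paper intends: the paper explicitly states that the proof is a ``direct and obvious modification of the simple list test ideal case'' with Proposition~\ref{generalshiftpro} (whose proof establishes the $S_e$--level shift) replacing Proposition~\ref{shiftpro}, and you have faithfully carried this out. Your identification of the refinement step (arranging $q^{e_n}\nmid m_n$) as the only nontrivial point, and your handling of the passage from $\mathbb{R}$ to $\mathbb{R}/\mathbb{Z}$, match the argument of Corollary~\ref{shiftcor} precisely.
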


\begin{rem} The equation $(*)$ in \ref{generalshiftpro} implies that for fixed $\lambda$ the ideals $\tau(\{A_{k,n}\},\lambda,e)$ decrease as $e$ increases.
\end{rem}

\begin{pro}
Fix a regular sequence $x_1,...,x_n \in R$ defining a maximal ideal $m \subset R$.  Let $Q:R \rightarrow S = R/(x_n)$ denote the quotient map and $y_i = Q(x_i)$ for $1 \leq i < n$.   Suppose that the following axioms are satisfied.
\begin{enumerate}
\item $F_{R*}^{(e+1)}R$ is freely generated over $R$ by $x^{\vec{u}}$ (in multi-index notation) for $0 \leq \vec{u} \leq q^{e+1}-1$.
\item $F_{S*}^{(e+1)}S$ is freely generated over $S$ by $y^{\vec{v}}$  (in multi-index notation) for $0 \leq \vec{v} \leq q^{e+1}-1$
\end{enumerate}
If $\{A_{k,n}\}$ is a list in $M_l(S)$ and $\{\widetilde{A_{k,n}}\}$ are representatives in $R$ with $Proj_{x^{\vec{u}}}(a)=0$ for all $\vec{u}$ with non-zero $n^{th}$ component and for any entry $a$ of $\widetilde{A_{k,n}}$ then
$$\tau(\{\widetilde{A_{k,n}}\}, \lambda, e) + (x_n) = Q^{-1}(\{A_{k,n}\},\lambda,e)).$$
\end{pro}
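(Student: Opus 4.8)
The plan is to imitate the proof of \ref{quotpro} almost verbatim, with the scalar product $\widetilde r=\widetilde{r_{i_0}}\cdots\widetilde{r_{i_e}}^{q^e}$ there replaced by the matrix-valued polynomial $\widetilde H^{e+1}_{m}(\tau)$, $m=\ceiling{\lambda' q^{e+1}}-1$, and with the free module $R^{\oplus l}$ of \ref{basistypethm} replaced by the ambient module $R^{\oplus lN}\cong R^{\oplus l}\oplus\cdots\oplus R^{\oplus l}\tau^{N}$ of the list test module (after enlarging $N$ once so that it bounds $\deg_\tau$ of every $\widetilde H^{e+1}_m$, hence of every $H^{e+1}_m$, which is legitimate since $\deg_t A(t)\le\deg_t\widetilde A(t)$). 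First I would reduce to a single value of $\lambda'$: the list test module is the \emph{finite} sum $\sum_{\lambda'\le\lambda}(H^{e+1}_{m}(\tau)S^{\oplus l})^{[\frac{1}{q^{e+1}}]}$ (the index $m$ runs over $\{0,\dots,q^{e+1}-1\}$), and since $Q\colon R^{\oplus lN}\to S^{\oplus lN}$ is surjective with kernel $x_nR^{\oplus lN}$ one checks at once that $Q^{-1}$ commutes with finite sums of submodules; so it suffices to prove, for each admissible $m$,
\[
Q^{-1}\!\bigl((H^{e+1}_{m}(\tau)S^{\oplus l})^{[\frac{1}{q^{e+1}}]}\bigr)=(\widetilde H^{e+1}_{m}(\tau)R^{\oplus l})^{[\frac{1}{q^{e+1}}]}+x_nR^{\oplus lN}.
\]

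The second step propagates the hypothesis. Applying $Q$ to the entries of a matrix is a ring homomorphism, so it commutes with matrix multiplication and with taking $q$-th powers of entries; hence it sends $\widetilde A(t)$ to $A(t)$, therefore $\widetilde A(t)^{e}=\widetilde A(t)^{[q^{e}]}\cdots\widetilde A(t)$ to $A(t)^{e}$, and therefore each coefficient $\widetilde H^{e+1}_m(\tau)$ to $H^{e+1}_m(\tau)$. Next, the condition $Proj_{x^{\vec{u}}}(a)=0$ for every $\vec{u}$ with nonzero $n$-th component is preserved under multiplication at a fixed Frobenius level and is carried from level $e'$ to level $e'+i$ by the $q^{i}$-power map; since $\widetilde A(t)^{e}$ is assembled from precisely such twists and products of the finitely many $\widetilde A_{k,n}$, and since repackaging the $t$-powers of $\widetilde A(t)^{e}$ into the $\tau^{j}$-coefficients of $\widetilde H^{e+1}_m$ does not disturb this, every matrix entry of $\widetilde H^{e+1}_m(\tau)$ again has zero projection onto the basis vectors of $F^{(e+1)}_{R*}R$ involving $x_n$.

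With these two facts the computation is the one in \ref{quotpro}. Reindex $\{x^{\vec{v}}:v_n=0\}$ as $\{b_i\}$, so that by the freeness axioms $\{b_ix_n^{\,j}\}_{0\le j<q^{e+1}}$ is an $R$-basis of $F^{(e+1)}_{R*}R$ and $\{Q(b_i)\}$ is an $S$-basis of $F^{(e+1)}_{S*}S$; extend both componentwise. Expanding the columns $\widetilde H^{e+1}_m(\tau)e_s$ ($s=1,\dots,l$), which generate $\widetilde H^{e+1}_m(\tau)R^{\oplus l}$, in the $R$-basis, Step 2 makes every $x_n^{\,j}$-term with $j>0$ vanish, so $\widetilde H^{e+1}_m(\tau)e_s=\sum_i b_i\,w_{s,i}^{q^{e+1}}$ with $w_{s,i}\in R^{\oplus lN}$ (entrywise powers), and \ref{basistypethm} gives $(\widetilde H^{e+1}_m(\tau)R^{\oplus l})^{[\frac{1}{q^{e+1}}]}=\sum_{s,i}R\,w_{s,i}$. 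Applying $Q$ and invoking Step 2 once more, $H^{e+1}_m(\tau)e_s=\sum_i Q(b_i)\,Q(w_{s,i})^{q^{e+1}}$, so \ref{basistypethm} over $S$ gives $(H^{e+1}_m(\tau)S^{\oplus l})^{[\frac{1}{q^{e+1}}]}=\sum_{s,i}S\,Q(w_{s,i})$; pulling this back along the surjection $Q$ with kernel $x_nR^{\oplus lN}$ produces exactly $\sum_{s,i}R\,w_{s,i}+x_nR^{\oplus lN}$, which is the displayed identity. Summing over $\lambda'\le\lambda$ completes the proof.

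The only place I expect real work is Step 2: unlike \ref{quotpro}, where the projection condition is destroyed or preserved by a single line of scalar arithmetic, here one must follow it simultaneously through matrix multiplication, the $q^{i}$-power twists, and the bookkeeping that extracts $\widetilde H^{e+1}_m$ from $\widetilde A(t)^{e}$; everything downstream is the argument of \ref{quotpro} with $R^{\oplus lN}$ in the role of $R$.
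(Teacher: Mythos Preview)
Your proposal is correct and is precisely the ``direct and obvious modification'' of the proof of \ref{quotpro} that the paper asserts (without writing out) suffices here. Your identification of Step~2---propagating the projection hypothesis through matrix products, Frobenius twists, and the extraction of $\widetilde H^{e+1}_m$---as the only genuinely new bookkeeping is accurate, and your handling of it is sound.
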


\begin{lem}
If $R$ is a polynomial ring then the jumping numbers for $\{A_{k,n}\}$ in $(0,1]$ are finite and rational.
\end{lem}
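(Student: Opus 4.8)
The plan is to follow the proof of Lemma \ref{polylem} essentially word for word; the only point that needs new input is the degree estimate that, in the simple case, allowed all the test ideals to be housed inside a single finite-dimensional $\boldk$-vector space.

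First I would fix an integer $d$ bounding simultaneously the $t$-degree of $A(t)$ and the $x$-degrees of the entries of all the matrices $A_{k,n}$. Since twisting by $[q^{j}]$ multiplies entry degrees by $q^{j}$, the entries of $A(t)^{e} = A(t)^{[q^{e}]}\cdots A(t)$ then have $t$-degree and $x$-degree each at most $d(1+q+\cdots+q^{e}) = d\tfrac{q^{e+1}-1}{q-1}$. Reading off $H^{e+1}_{\beta}(\tau)$ as a coefficient in the expansion $A(t)^{e} = \sum_n H^{e+1}_n(t^{q^{e+1}})t^n$ and using the preceding lemma to bound its $\tau$-degree by $N$, the submodule $H^{e+1}_{\beta}(\tau)R^{\oplus l}$ of $R^{\oplus l}\oplus R^{\oplus l}\tau \oplus\cdots\oplus R^{\oplus l}\tau^{N}$ is generated by the $l$ columns of $H^{e+1}_{\beta}$, each of which, viewed as a vector in that free module, has all entries of $x$-degree at most $d\tfrac{q^{e+1}-1}{q-1}$. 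Applying Corollary \ref{genbounds} (with $q^{e+1}$ in place of $q^{e}$) coordinatewise then shows that $(H^{e+1}_{\beta}(\tau)R^{\oplus l})^{[\frac{1}{q^{e+1}}]}$ is generated by vectors all of whose entries have $x$-degree at most $\lfloor d\tfrac{q^{e+1}-1}{(q-1)q^{e+1}}\rfloor \le \lfloor \tfrac{d}{q-1}\rfloor$, a bound independent of $\beta$ and $e$. Hence, being a finite sum of such submodules, every list test module $\tau(\{A_{k,n}\},\lambda,e)$ lies inside the fixed finite-dimensional $\boldk$-vector space $W$ spanned by the monomials $x^{\vec u}\tau^{j}e_i$ with $0 \le \vec u \le \lfloor \tfrac{d}{q-1}\rfloor$, $0 \le j \le N$ and $1 \le i \le l$.

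From there the proof is a transcription of \ref{polylem}. For fixed $e$ the modules $\tau(\{A_{k,n}\},\lambda,e)\subset W$ form an increasing chain of $\boldk$-subspaces of $W$ as $\lambda$ grows, so $\#S_e \le \operatorname{length}(W)+1$; and any sequence $s_n\in S_{e_n}$ with $s_{n+1}<s_n$ and $e_{n+1}\ge e_n$ produces a strictly decreasing chain of subspaces of $W$, hence has length at most $\operatorname{length}(W)+1$. Setting $T_e = \{0\}\cup S_e \subset [0,1]$ and invoking the corollary above (the $\{A_{k,n}\}$-analogue of \ref{shiftcor}) to see that the family $\{T_e\}$ is stable, on accumulation points, under $\lambda\mapsto$ fractional part of $q\lambda$, the family $\{T_e\}$ satisfies the hypotheses of the abstract proposition on subsets of $[0,1]$ stated after \ref{polylem}; that proposition then yields that the set of accumulation points of $\{T_e\}$ --- which contains the set of jumping numbers of $\{A_{k,n}\}$ in $(0,1]$ --- is finite and consists of rationals. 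The step I expect to be the only real obstacle is the first one: confirming that the entrywise degrees of $A(t)^{e}$ grow like the geometric sum $d\tfrac{q^{e+1}-1}{q-1}$ and that Corollary \ref{genbounds} legitimately applies to the columns of $H^{e+1}_{\beta}(\tau)$ regarded as vectors in the truncation $R^{\oplus l}\oplus\cdots\oplus R^{\oplus l}\tau^{N}$; granting this, no idea beyond those already used for \ref{polylem} is needed.
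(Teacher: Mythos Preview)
Your proposal is correct and matches the paper's approach exactly: the paper itself omits the proof of this lemma, stating only that it follows by ``direct and obvious modification'' of the proof of Lemma~\ref{polylem} with \ref{shiftpro} replaced by Proposition~\ref{generalshiftpro}. Your writeup faithfully carries out that modification, and the degree estimate you flag as the only new point is indeed the only thing requiring verification; your argument for it via the geometric sum on entrywise degrees and Corollary~\ref{genbounds} applied in $R^{\oplus l(N+1)}$ is correct.
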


\begin{thm} If $R$ is smooth and of essentially finite type over $\boldk$ then the set of jumping numbers of $\{A_{k,n}\}$ are discrete and rational.
\end{thm}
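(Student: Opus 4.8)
The plan is to transcribe the proof of Theorem \ref{sltimainthm} into the list-test-module setting, reducing in stages to the polynomial ring and then quoting the Lemma immediately above.

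First I would reduce to the polynomial ring case exactly as in \ref{sltimainthm}. Since $R$ is smooth and essentially of finite type over $\boldk$, cover $Spec(R)$ by finitely many Zariski opens $U_i$, each a standard \'etale neighborhood of an affine open in some $\mathbb{A}^n_{\boldk}$. Because $(-)^{[1/q^{e+1}]}$ commutes with flat, hence \'etale, pullback (\ref{etalelocalcor}), the relevant sets $S_e$ for $R$ decompose as the union over the $U_i$ of the corresponding sets, and since the cover is finite an accumulation point of $\{S_e\}$ is an accumulation point of the family associated to some $U_i$; so it suffices to work on one standard \'etale neighborhood. There $R$ is a localization of $\boldk[x_1,\dots,x_n][t]/(f(t))$ with $f$ monic irreducible, and by choosing representatives $\widetilde{A_{k,n}}$ of the matrix entries whose $x^{\vec u}$-coefficients vanish for every multi-index $\vec u$ with nonzero last component, the quotient proposition stated just above this theorem identifies $\tau(\{A_{k,n}\},\lambda,e)$ with the preimage under $Q$ of the corresponding list test module upstairs. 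This reduces us to a localization of a polynomial ring, and then, localization being flat, to a polynomial ring; that case is the preceding Lemma.

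For the Lemma I would follow \ref{polylem}. Set $d=\deg_t A(t)$. The degree lemma above bounds $\deg_\tau H^{e+1}_m(\tau)$ by $d/(q-1)$ uniformly in $e$ and $m$, and also bounds the $x$-degrees of its entries in terms of $d$; feeding this into \ref{genbounds} applied to $R^{\oplus lN}$, each generator $(H^{e+1}_{\ceiling{\lambda q^{e+1}}-1}(\tau)R^{\oplus l})^{[1/q^{e+1}]}$, and hence each $\tau(\{A_{k,n}\},\lambda,e)$, is generated by elements of degree at most $\floor{d'/(q-1)}$ for a constant $d'$ independent of $\lambda$ and $e$. So all these submodules are detected inside one finite-dimensional $\boldk$-subspace $W\subset R^{\oplus lN}$, whence $|S_e|\le length(W)+1$ uniformly in $e$, and there is no strictly decreasing chain $s_{n+1}<s_n$ with $s_n\in S_{e_n}$, $e_{n+1}\ge e_n$, of length exceeding $length(W)+1$. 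Setting $T_e=\{0\}\cup S_e$, condition (1) of the abstract proposition used at the end of the proof of \ref{polylem} holds, and condition (2), that the fractional part of $q\lambda$ is again an accumulation point, is exactly \ref{generalshiftpro}; that proposition then yields that the jumping numbers of $\{A_{k,n}\}$ form a finite set of rationals, and the integer-translation convention upgrades this to discreteness and rationality of the extended jumping numbers.

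I expect no serious obstacle, since the statement is the module analogue of \ref{sltimainthm} and its proof is a transcription. The two points worth checking carefully are that the degree bounds on the $H^e_n$ and on the generators of the list test modules are genuinely uniform in $e$ (this is the role of the $\tau$-degree lemma together with \ref{genbounds}, and the only place where passing from ideals to modules could conceivably cause trouble), and that the quotient proposition for list test modules is valid, which rests on applying \ref{basistypethm} to submodules of $R^{\oplus l}$ rather than to ideals of $R$; neither step introduces a new idea beyond the simple-list case, whose genuinely substantive input — the shift relation \ref{generalshiftpro} — is already in hand.
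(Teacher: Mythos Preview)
Your proposal is correct and matches the paper's approach exactly: the paper explicitly states that the proofs of the list-test-module statements ``follow from direct and obvious modification of the simple list test ideal case and replacing \ref{shiftpro} with the above proposition,'' which is precisely the transcription you outline---\'etale localization via \ref{etalelocalcor}, reduction through the quotient proposition to a polynomial ring, and then the degree-bound argument of \ref{polylem} combined with \ref{generalshiftpro} feeding into the abstract accumulation-point proposition.
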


\begin{thm}\label{Sethm}
If $R$ is smooth and of essentially finite type over $\boldk$ then there exists an integer $N$ such that for all $e \geq N$ 
$$S_e \subset \{\frac{\ceiling{\lambda q^{e+1}} - a}{q^{e+1}} | \lambda \text{ is a jumping number for } \{A_{k,n}\}, 0 \leq a < q^N \}.$$
\end{thm}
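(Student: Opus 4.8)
The plan is to transcribe the proof of the corresponding theorem for simple list test ideals, since every ingredient used there is now available in the list--test--module setting. First I would isolate the two structural inputs that drove the earlier argument. The degree bound $\deg_\tau H^{e}_n(\tau) \leq \frac{d}{q-1}$ (with $d = \deg_t A(t)$) combined with the free--module form of \ref{genbounds} shows that every $\tau(\{A_{k,n}\},\lambda,e)$ is generated in bounded degree, hence is recovered by its intersection with a fixed finite--dimensional space $W$; this forces a uniform bound on $|S_e|$ and, exactly as in the argument of \ref{polylem}, shows that there is an integer $N_0$ such that $\{S_e\}$ contains no sequence $s_n \in S_{e_n}$ with $s_{n+1} < s_n$ and $e_{n+1} \geq e_n$ of length $N_0$. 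The matrix analogue of \ref{polylem} also shows, as in the simple case, that $0$ is never an accumulation point of $\{S_e\}$ and that the set of jumping numbers in $(0,1]$ is finite. Second, the corollary following \ref{generalshiftpro} plays the role that \ref{shiftcor} played before: if $\lambda$ is a jumping number and $q\lambda$ is not an integer, then the fractional part of $q\lambda$ is again a jumping number.

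With these in hand the two claims of the simple case go through unchanged. \emph{Claim 1}: for $\epsilon$ small enough that the finitely many jumping numbers in $(0,1]$ are pairwise at distance $\geq 3\epsilon$, there is $E_\epsilon$ so that for $e \geq E_\epsilon$ and $s \in S_e$ there is a unique jumping number $\lambda$ with $0 \leq \lambda - s < \epsilon$; the proof is the same contradiction, passing to a convergent subsequence of hypothetical bad $s_n$ and observing it must either converge to a jumping number from above (contradicting the absence of long decreasing sequences), or converge to $0$ (excluded), or stay at distance $\geq \epsilon$ from every jumping number while converging to one (absurd). \emph{Claim 2} then strengthens the gap from $\epsilon$ to $\epsilon q^{-e+N_\epsilon}$ by induction on $e$, relating $s$, the fractional part of $qs$, and the jumping numbers $\lambda'$ (from Claim 1) and $\lambda$ (from the inductive hypothesis applied to the fractional part of $qs$) via the shift corollary exactly as before; the only arithmetic inputs are the finiteness of the jumping numbers and a preliminary choice of $N' \geq 2$ so that every jumping number $\lambda$ with $q\lambda$ not an integer stays at distance $\geq q^{-N'-1}$ from all points $a/q$. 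Finally, choosing $\epsilon$ meeting the hypotheses of Claim 2 and $N \geq N_\epsilon$ with $q^{N} > \ceiling{\epsilon q^{N_\epsilon+1}}$, for $e \geq N$ and $s = m/q^{e+1} \in S_e$ the estimate $s q^{e+1} \leq \lambda q^{e+1} < s q^{e+1} + \epsilon q^{N_\epsilon+1}$ yields $m \leq \ceiling{\lambda q^{e+1}} < m + q^{N}$, which is precisely the asserted description of $S_e$.

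The main obstacle here is not conceptual but a matter of bookkeeping: one must verify that the passage from the $1\times 1$, length--$<q$ situation to an arbitrary double--indexed list of $l\times l$ matrices does not destroy the finiteness input on which everything rests. This is exactly where the degree bound on $H^{e}_n(\tau)$ together with \ref{genbounds} are needed, since they replace the direct degree estimates on powers of $f$ used in the proof of \ref{polylem}. Once the uniform bound on $|S_e|$ and the absence of long strictly decreasing sequences are established for list test modules, the remainder of the argument is formal and identical to the simple case; I would therefore present the proof as ``repeat the proof of \ref{sltimainthm}'s companion theorem, replacing \ref{shiftpro} and \ref{shiftcor} by \ref{generalshiftpro} and its corollary, and replacing the ad hoc degree estimates by the degree bound on $H^{e}_n(\tau)$ together with \ref{genbounds},'' spelling out in detail only the inductive step of Claim 2.
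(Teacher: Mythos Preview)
Your proposal is correct and takes essentially the same approach as the paper. The paper does not give a separate proof of this theorem at all: it states that the remaining results in the list--test--module subsection, including \ref{Sethm}, ``follow from direct and obvious modification of the simple list test ideal case and replacing \ref{shiftpro} with the above proposition,'' and your outline is precisely that modification spelled out (degree bound on $H^e_n(\tau)$ plus \ref{genbounds} replacing the ad hoc estimates, the corollary to \ref{generalshiftpro} replacing \ref{shiftcor}, and the two claims carried over verbatim).
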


\section{Actions of Euler Operators}

Throughout this section we fix $q$ and $\gamma$ such that $q=p^{\gamma}$.

\subsection{Analysis of Euler operators}

Now we set $S = R[t]$ and use the constructions of section $2$ but for $S$ in place of $R$.\\

We will now study the actions of the higher Euler operators $\Theta_i = \partial_t^{[p^{i-1}]}t^{p^{i-1}} \in End_{p^{i}}(S) = D_S^{i}$.  We will also study the actions of the operators $\theta_i = t^{p^{i-1}}\partial_t^{[p^{i-1}]}$.  By Lucas' theorem, for any $n$ the identity $\binom{n+p^{i-1}}{p^{i-1}} = \binom{n}{p^{i-1}}$ holds in $S$.  It follows that $\Theta_i - \theta_i =1$.  The operators $\theta_i$ and $\Theta_i$ satisfy the Artin-Schreier equation $x^p-x$ and so their eigenvalues are in $\mathbb{F}_p$.  

\begin{df}Let $V$ be a $\mathbb{F}_p$-vector space equipped with commuting actions of $\{\theta_i\}_{i=1}^{\gamma e}$ (recall $q = p^{\gamma}$).  Define $0 \neq v \in V$ as an eigenvector of eigenvalue $0 \leq n < q^{e}$ if when $n= \sum_{l=0}^{\gamma e -1} i_{l}p^l$ is the base $p$ expansion then $v$ is an eigenvector of eigenvalue $i_{l-1}$ for $\theta_l$.\\
\end{df}

\begin{rem} The relation $\Theta_i - \theta_i =1$ implies that any $\mathbb{F}_p$ vector space with commuting actions of $\{\theta_i\}$ also has natural commuting actions of $\{\Theta_i\}$.  $v \in V$ is an eigenvector of eigenvalue $n = \sum_{l=0}^{\gamma e -1} i_{l}p^{l}$ for $\{\theta_i\}$ if and only if $\Theta_i v = 1+i_{l-1} v$.  This observation motivates the following  definition.
\end{rem}

\begin{df}
Let $V$ be a $\mathbb{F}_p$-vector space equipped with commuting actions of $\{\Theta_i\}_{i=1}^{\gamma e}$.  Define $0\neq v \in V$ as an eigenvector of eigenvalue $0 \leq n < q^{e}$ if when $n = \sum_{l=0}^{\gamma e-1}(p-1-j_{l})p^l$ is the base $p$ expansion then $v$ is an eigenvector of eigenvalue $-j_{l-1}$ for $\Theta_l$ for all $1 \leq l \leq \gamma e$.  Note that this definition is compatible with the definition of the eigenspaces for $\{\Theta_i\}$ given in \cite{Mus}.
\end{df}

\subsection{The $b$-function of a generating morphism}
We now fix a smooth closed codimension $1$ subscheme $Z \subset X$ defined locally by the coordinate $t=0$. 
Set $$V^{0}\bD^e_X = \{ P \in \bD^e_X | P\mI_Z^j \subset \mI_Z^j \text{ for all } j \geq 1\}$$
and $$V^{1}\bD^e_X = \{ P \in \bD^e_X | P \mI_Z^{j-1} \subset \mI_Z^{j}  \text{ for all } j \geq 1\}.$$

If $X = Spec(R[t])$ and $Z=\{t = 0\}$ then
$$V^{0}\bD^e_X = \{ P \in \bD^e_X | P\mI_Z^j \subset \mI_Z^j \text{ for all } j \geq 1\} = \bD_R^e[t,\theta_1,...,\theta_{\gamma e}]$$
and $$V^{1}\bD^e_X = \{ P \in \bD^e_X | P \mI_Z^{j-1} \subset \mI_Z^{j}  \text{ for all } j \geq 1\} = \bD_R^e[t,\theta_1,...,\theta_{\gamma e}]t.$$

The local operators $\theta_i$ are well-defined as elements of $V^{0}\bD_X^e/V^{1}\bD_X^e$.  In what follows, we only consider actions by such quotients so we make no distinction between the local operators $\theta_i$ and their lifts to $\bD^e_X$.

\begin{df}(The $b$-function of a generating morphism)
If $(\mM,F)$ is a unit $F_X$-module generated by $(M, A)$ with $M$ coherent over $X$, define a $b$-function for $((\mM,F),(M,A))$ to be any polynomial $b(s) \in \mathbb{C}[s]$ with roots in $(0,1]$ satisfying the following property: If $\{\lambda_i\}$ denotes the roots of $b(1-s)$ then there exists an integer $N$ such that for all $e \geq 0$,  the set
$$\{\ceiling{\lambda_iq^{e}} - a| 0 \leq a < q^N\} \cup \{0\}$$ contains all possible eigenvalues for the action of $\{\theta_i\}$ on
$$V^{0}\bD_X^eA^{e-1}M/V^{1}\bD_X^eA^{e-1}AM.$$

It is not difficult to check that all polynomials satisfying this property form an ideal in $\mathbb{C}[s]$.  If this ideal is not-zero then we say the $b$-function for $A$ exists and denote the unique minimal monic generator by $b_A(s)$.
\end{df}
\begin{rem} Clearly $b_A(s)$ depends on the generating morphism and $q$.
\end{rem}

\begin{rem} From the definition, it is clear the existence of a nonzero $b_A(s)$ is a Zariski-local property since $X$ is quasi-compact.  By \ref{moritaetale} it is an \'etale local property of the generator $A$.
\end{rem}

\begin{rem}The seemingly unmotivated assumption about the integer $N$ stems from the fact that the root $A$ could contain nilpotents.  The additional flexibility provided by $N$ ensures that the existence of a $b$-function is not affected by nilpotents, as will be seen in the next proposition.
\end{rem}

\begin{pro}\
\begin{enumerate}\label{nilpotentpro}
\item If $(M,A)$ is a root morphism and $b_A(s)$ exists then $b_{\bD^mA}(s)$ exists and $b_{\bD^mA}(s) | b_{A}(s)$ where $b_{\bD^mA}(s)$ is the $b$-function for the generating morphism $F^m(A)$ restricted to $\bD^{m}_XA^{m-1}M \subset F^* _XM$.
\item If there is a commutative diagram of generating morphisms on $X$
$$\begin{xymatrix}{M_1  \ar[r]^{A_1} & F_X^* M_1\\ M_2 \ar@{^{(}->}[u]^{\iota}\ar[r]^{A_2} & F^*_X M_2  \ar@{^{(}->}[u]^{F^*\iota}}\end{xymatrix}$$
such that $(M_2,A_2)$ is minimal and $b_{A_2}(s)$ exists then for some $m$ $b_{\bD^{m}A_1}(s)$ exists and $b_{\bD^{m}(A_1)}(s) | b_{A_2}(s)$.
\end{enumerate}
\end{pro}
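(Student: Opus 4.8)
The strategy is to reduce both parts to a comparison between the graded pieces
$$W_e(B):=V^0\bD^e_X B^{e-1}M'/V^1\bD^e_X B^{e-1}M'$$
attached to the relevant generating morphisms $B\colon M'\to F_X^*M'$, using Corollary~\ref{gencor} to pass between the direct limit $\mM$ and the generating data, and Theorem~\ref{structurethm} to describe $\bD^e_X$-submodules. By Proposition~\ref{dactcor} the coherent submodule $\bD^m_XA^{m-1}M\subseteq F_X^{m*}M$ is carried by $\mu_m$ onto $N:=\bD^m_X\mu_0(M)\subseteq\mM$; since $A$ is a root morphism, $\mu_0(M)\subseteq\mu_0(M)^{[q]}$, hence $N=(\mu_0(M)^{[1/q^m]})^{[q^m]}\subseteq N^{[q]}$ by Theorem~\ref{structurethm}, so the generating morphism $\bD^mA$ of $N$ is again a root morphism and $W_e(\bD^mA)\cong V^0\bD^e_XN/V^1\bD^e_XN$.

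\textbf{Part (1).} The key point will be to compare the Euler-operator action on $W_e(\bD^mA)$ with that on $W_{e+m}(A)$. Passing to the $\bD^m_X$-saturation only enlarges submodules ($\mu_0(M)\subseteq N$) and is controlled by $\bD^{e+m}_X$ (one has $\bD^e_X\bD^m_X\subseteq\bD^{e+m}_X$, and in fact $\bD^e_X\bD^m_X=\bD^e_X$ once $e\ge m$, so $\bD^e_XN\subseteq\bD^{e+m}_X\mu_0(M)$); passing to $V$-graded quotients and keeping track of how a $\bD^m_X$-factor shifts the indexing of the $\theta_i$, one should find that every eigenvalue of $\{\theta_i\}$ on $W_e(\bD^mA)$ is of the form $\lfloor n/q^m\rfloor$ (the base-$p$ digits in positions $\gamma m,\dots,\gamma(e+m)-1$) for some eigenvalue $0\le n<q^{e+m}$ of $\{\theta_i\}$ on $W_{e+m}(A)$. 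Granting this, suppose $b_A(s)$ exists with integer $N_0$ and let $\{\lambda_i\}$ be the roots of $b_A(1-s)$, so the eigenvalues on $W_{e+m}(A)$ lie in $\{0\}\cup\{\ceiling{\lambda_i q^{e+m}}-a\mid 0\le a<q^{N_0}\}$; the elementary estimate $\ceiling{\lambda_i q^e}q^m-q^m<\ceiling{\lambda_i q^{e+m}}\le\ceiling{\lambda_i q^e}q^m$ then shows $\lfloor(\ceiling{\lambda_i q^{e+m}}-a)/q^m\rfloor$ lies in $\{\ceiling{\lambda_i q^e}-a'\mid 0\le a'<q^{N}\}$ for an integer $N$ depending only on $N_0$ (one may take $N=N_0+1$). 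Hence $b_A(s)$ is itself a $b$-function for $\bD^mA$, with the integer $N$, so $b_{\bD^mA}(s)$ exists and divides $b_A(s)$. Making the equivariant comparison between $W_e(\bD^mA)$ and $W_{e+m}(A)$ precise — in particular the reindexing of the Euler operators, and the fact that every eigenvalue of $W_e(\bD^mA)$ is hit — is the main obstacle; the ceiling arithmetic and everything downstream of it is routine.

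\textbf{Part (2).} This follows from Part (1). The commutative diagram identifies $\mM_2$ with $\mM_1=:\mM$ and, by minimality of $(M_2,A_2)$, $\mu_0(M_2)$ is the minimal generator $\mu_0(M_1)^{[0]}$ of $\mM$; hence $\mu_0(M_1)^{[1/q^m]}=\mu_0(M_2)$ for all $m\gg 0$ (the descending chain $\mu_0(M_1)\supseteq\mu_0(M_1)^{[1/q]}\supseteq\cdots$ stabilizes), while $\mu_0(M_2)^{[1/q^m]}=\mu_0(M_2)$ for every $m$. Theorem~\ref{structurethm} then gives, for $m\gg 0$,
$$\bD^m_X\mu_0(M_1)=\bigl(\mu_0(M_1)^{[1/q^m]}\bigr)^{[q^m]}=\mu_0(M_2)^{[q^m]}=\bigl(\mu_0(M_2)^{[1/q^m]}\bigr)^{[q^m]}=\bD^m_X\mu_0(M_2),$$
so $\bD^mA_1$ and $\bD^mA_2$ determine the same root generating morphism of $\mM$; in particular $W_e(\bD^mA_1)=W_e(\bD^mA_2)$ for all $e$, whence $b_{\bD^mA_1}(s)=b_{\bD^mA_2}(s)$ whenever either side exists. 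Since $A_2$ is a root morphism and $b_{A_2}(s)$ exists by hypothesis, Part (1) shows $b_{\bD^mA_2}(s)$ exists and divides $b_{A_2}(s)$; combining, $b_{\bD^mA_1}(s)$ exists and divides $b_{A_2}(s)$, which is the assertion.
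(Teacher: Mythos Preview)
Your Part~(2) is correct and is exactly the paper's argument: minimality of $(M_2,A_2)$ forces $\mu_0(M_1)^{[1/q^m]}=\mu_0(M_2)$ for $m\gg 0$, whence $\bD^m_X\mu_0(M_1)=\bD^m_X\mu_0(M_2)$ by Theorem~\ref{structurethm}, so $\bD^mA_1$ and $\bD^mA_2$ coincide and one invokes Part~(1).

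Your Part~(1), however, has a genuine gap, and the route you attempt is not the paper's. You propose to compare $W_e(\bD^mA)$ with $W_{e+m}(A)$ via the containment $\bD^e_X N\subseteq\bD^{e+m}_X\mu_0(M)$, hoping that eigenvalues on the former are $\lfloor n/q^m\rfloor$ for eigenvalues $n$ on the latter. But this containment does not respect the $V$-filtration: $V^0\bD^e_X N = V^0\bD^e_X\cdot\bD^m_X\,\mu_0(M)$ contains elements such as $\partial_t^{[p^{j-1}]}\mu_0(M)$ for $j\le\gamma m$, which lie in $V^{-1}\bD^{e+m}_X\mu_0(M)$, not in $V^0\bD^{e+m}_X\mu_0(M)$. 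So there is no evident map or subquotient relation $W_e(\bD^mA)\to W_{e+m}(A)$ of the kind your eigenvalue claim would need, and you yourself flag this step as ``the main obstacle.'' The ceiling arithmetic downstream is fine, but it rests on an unproven comparison.

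The paper instead compares $W_e(\bD^mA)$ with $W_e(A)$ at the \emph{same} level $e$. Working in local coordinates one has $V^j\bD^e_X\bD^m_X = V^j\bD^e_X[\partial_t,\partial_t^{[p]},\ldots,\partial_t^{[p^{\gamma m-1}]}]$, and Lemma~\ref{ringdecomplemma} (applied with $B=V^j\bD^e_X$ and $s_i=\partial_t^{[p^{i-1}]}$) gives
\[
V^j\bD^e_X\bD^m_X A^{e+m-1}M \;=\; \sum_{n\ge 0}\sum_{i=1}^{\gamma m}(\partial_t^{[p^{i-1}]})^n\,V^j\bD^e_X A^{e+m-1}M.
\]
The key commutation is that $\theta_i$ for $i>\gamma m$ commutes with every $(\partial_t^{[p^{j-1}]})^n$ for $j\le\gamma m$. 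Hence any nontrivial eigenspace for the \emph{higher} operators $\{\theta_i\}_{i>\gamma m}$ on the saturated quotient $V^0\bD^e_X\bD^m_X A^{e+m-1}M/V^1\bD^e_X\bD^m_X A^{e+m-1}M$ already appears on the unsaturated quotient $V^0\bD^e_X A^{e+m-1}M/V^1\bD^e_X A^{e+m-1}M$. Finally, since $A$ is a root morphism, $F^{e*}(A^{m-1})$ is a $\bD^e_X$-linear injection identifying this last quotient with $W_e(A)$. Thus only the lower $\gamma m$ base-$p$ digits of an eigenvalue on $W_e(\bD^mA)$ are uncontrolled, and this slack is absorbed by enlarging the integer $N$ in the definition of a $b$-function.
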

Before proving the next proposition, we need  a lemma.

\begin{lem}\label{ringdecomplemma} Let $B \subset C$ is an inclusion of (possibly noncommutative, nonunitary) rings, $Q$ a $C$-module, and $T \subset Q$ a subset.  For $s_1,...,s_k \in C$  let   $B[s_1,...,s_{j-1}]$ denote the subalgebra of $C$ generated by $B$ and $s_1,...s_k.$  If the commutator $[c, s_{i}] \in B[s_1,...,s_{i-1}]$ for all $c \in B[s_1,...,s_{i-1}]$ then
$$B[s_1,...,s_k]T = \sum_{n \geq 0}\sum_{i=1}^k s_i^n BT.$$
\end{lem}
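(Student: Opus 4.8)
The plan is to prove this by a straightening (normal-form) argument, rewriting an arbitrary element of $B[s_1,\dots,s_k]$ acting on $T$ as a sum of ``sorted'' monomials. One inclusion is immediate: each summand $s_i^nBT$ (with $s_i^0B$ read as $B$) lies in $B[s_1,\dots,s_k]T$, since $s_i\in B[s_1,\dots,s_k]$ and $B\subset B[s_1,\dots,s_k]$, so $s_i^nb\in B[s_1,\dots,s_k]$ for every $b\in B$. For the reverse inclusion, observe that every element of $B[s_1,\dots,s_k]$ is a finite $\mathbb{Z}$-linear combination of ``words'' $u_1u_2\cdots u_r$ with each $u_j\in B\cup\{s_1,\dots,s_k\}$, so it suffices to show $wT\subseteq\sum_{n\ge 0}\sum_{i=1}^k s_i^nBT$ for each such word $w$.

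The heart of the argument is a single-generator reduction, using only the commutator hypothesis for the top index $k$: namely $B[s_1,\dots,s_k]T=\sum_{a\ge 0}s_k^a\bigl(B[s_1,\dots,s_{k-1}]T\bigr)$. To prove ``$\subseteq$'' I would push every occurrence of $s_k$ in a word to the far left. Whenever $s_k$ stands immediately to the right of a letter $u\ne s_k$ --- so $u\in B$ or $u=s_j$ with $j<k$, hence in either case $u\in B[s_1,\dots,s_{k-1}]$ --- I rewrite $us_k=s_ku+[u,s_k]$; by the hypothesis $[u,s_k]\in B[s_1,\dots,s_{k-1}]$, so it expands as a sum of words in $B\cup\{s_1,\dots,s_{k-1}\}$, none containing $s_k$. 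To see that this process terminates I induct on the pair consisting of the number of $s_k$-letters in $w$ and the quantity $\sum_{\text{occurrences of }s_k}\#\{\text{non-}s_k\text{ letters to its left}\}$, ordered lexicographically: the summand $\cdots s_ku\cdots$ has the same number of $s_k$-letters but strictly smaller ``disorder'', while each summand obtained by substituting $[u,s_k]$ has one fewer $s_k$-letter. Hence every word is congruent, modulo words of strictly smaller invariant, to one of the form $s_k^a w'$ with $w'$ a word in $B\cup\{s_1,\dots,s_{k-1}\}$; evaluating on $T$ this lies in $s_k^a B[s_1,\dots,s_{k-1}]T$, which gives the single-generator reduction.

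To finish I would iterate: apply the single-generator reduction again to $B[s_1,\dots,s_{k-1}]T$ (legitimate since the hypothesis is assumed for every index), then to $B[s_1,\dots,s_{k-2}]T$, and so on, peeling the generators off in the order $s_k,s_{k-1},\dots,s_1$ and terminating at $BT$. This exhibits $B[s_1,\dots,s_k]T$ as the span of the ordered monomials $s_k^{a_k}\cdots s_1^{a_1}b\,t$ with $a_i\ge 0$, $b\in B$, $t\in T$, which gives the decomposition in the statement (pure products of the $s_i$ being absorbed into the $n=0$ term $BT$ since the identity lies in $B$ in the situations where the lemma is applied). I expect the main obstacle to be bookkeeping rather than anything conceptual: pinning down a monovariant that every rewriting step strictly decreases --- the lexicographic pair above is designed for exactly this --- and checking at each peeling stage that the commutator error terms genuinely land inside $B[s_1,\dots,s_{i-1}]$, which is precisely what the hypothesis guarantees provided the generators are removed from the top index downward.
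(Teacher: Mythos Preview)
The paper states this lemma without proof, so there is no argument to compare against. Your straightening procedure is correct through the single-generator reduction and its iteration: you legitimately obtain
\[
B[s_1,\dots,s_k]T \;=\; \sum_{a_1,\dots,a_k\ge 0} s_k^{a_k}\cdots s_1^{a_1}\,BT.
\]
The gap is in your last sentence, where you assert that this coincides with the stated right-hand side $\sum_{n\ge 0}\sum_{i=1}^k s_i^n\,BT$. Your parenthetical justification does not handle mixed monomials $s_k^{a_k}\cdots s_1^{a_1}\,bt$ with two or more nonzero exponents; nothing in the hypotheses lets you rewrite such a term as a sum of terms of the form $s_i^n b't'$ involving a \emph{single} generator.

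Indeed the identity as literally stated is false in this generality: take $B=\mathbb{F}_p$, $C=\mathbb{F}_p[s_1,s_2]$ the commutative polynomial ring, $Q=C$, and $T=\{1\}$. All commutators vanish, so the hypothesis holds, yet $s_1s_2\in B[s_1,s_2]T=C$ while $s_1s_2\notin \mathbb{F}_p[s_1]+\mathbb{F}_p[s_2]=\sum_{n,i}s_i^n BT$. What you have actually established---the ordered-monomial decomposition above---is the correct version, and it already suffices for the paper's only use of the lemma in the proof of Proposition~\ref{nilpotentpro}: there one merely needs each summand to be of the form $P\cdot V^j\bD^e_XA^{e+m-1}M$ with $P$ a (possibly mixed) product of the operators $\partial_t^{[p^{i-1}]}$, and any such product commutes with $\theta_l$ for $l>\gamma m$ just as well as a pure power does. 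So your argument is fine; just stop at the ordered-monomial form and drop the final reduction.
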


\begin{proof}(of \ref{nilpotentpro})

\begin{enumerate}
\item 
We wish to study the eigenspaces of $\{\theta_i\}_{i=1}^{\gamma e}$ acting on
$$(V^j\bD^{e}_X)\bD^m_X A^{e+m-1}M$$ for $j=0,1$.\\

Using that $V^j\bD^{e}_X\bD^m_X = V^j\bD^{e}_X[\partial_t, \partial_t^{[p]},...,\partial_t^{[p^{(\gamma m-1)}]}]$ (in local coordinates) then by \ref{ringdecomplemma}
we have
$$V^j\bD^{e}_X\bD^m_XA^{e+m-1}M = \sum_{n \geq 0} \sum_{i=1}^{\gamma m} (\partial_t^{[p^{i-1}]})^nV^j\bD^{e}_XA^{e+m-1}M.$$

The operators $\{\theta_i\}_{i=\gamma m+1}^{\gamma(e-m)}$ commute with $(\partial_t^{[p^{j-1}]})^n$ for any $j < \gamma m$.  Therefore if there exists a nontrivial eigenspace of value $\eta$ for the collection $\{\theta_i\}_{i=\gamma m +1}^{\gamma e}$ acting on the quotient
$$V^0\bD^{e}_X\bD^m_XA^{e+m-1}M/V^1\bD^{e}_X\bD^m_XA^{e+m-1}M$$
then it must have been true that the $\eta$ eigenspace of
$$V^0\bD^{e}_XA^{e+m-1}M/V^1\bD^{e}_XA^{e+m-1}M$$
is nontrivial.

Therefore it is enough to analyze the action of the operators $\{\theta_i\}_{i=\gamma m +1}^{\gamma e}$ on the latter quotient.
Using that $A$ is injective, we see the nontrivial eigenspaces of this quotient are exactly the nontrivial eigenspaces of 
$$V^0\bD^{e}_XA^{e-1}M/V^1\bD_X^{e}A^{e-1}M.$$

\item As $(M,A)$ is minimal, for some $m$ we have $\bD^{m}_XA_1^{m-1}M_2 = F^{m*}M_2 = \bD^{m}_XA_2M_2$.

\end{enumerate}
\end{proof}

The next proposition provides a way to simplify the analysis of the existence of $b$-functions by reducing to the case when $M$ is free of finite rank.\\

\begin{pro}\label{freered} If there is a commutative diagram of generating morphisms on $X$
$$\begin{xymatrix}{M_1 \ar@{>>}[d]^{\pi} \ar[r]^{A_1} & F_X^* M_1 \ar[d]^{F^*\pi}\\ M_2 \ar[r]^{A_2} & F^*_X M_2}\end{xymatrix}$$
and $b_{A_1}(s)$ exists then $b_{A_2}(s)$ exists and divides $b_{A_1}(s)$.
\end{pro}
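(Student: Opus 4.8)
The plan is to push the surjection $\pi$ through every construction entering the definition of the $b$-function and then read off an inclusion of eigenvalue sets.

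First I would observe that Frobenius pullback is right exact, so $F_X^{e*}\pi\colon F_X^{e*}M_1\to F_X^{e*}M_2$ is surjective for each $e$, and the commutativity of the given square forces $F_X^{e*}\pi\circ A_1^{e-1}=A_2^{e-1}\circ\pi$, so that $F_X^{e*}\pi$ sends $A_1^{e-1}M_1$ onto $A_2^{e-1}M_2$. Since $\pi$ is $\mO_X$-linear, $F_X^{e*}\pi$ is $\bD_X^e$-equivariant for the natural actions $P(f\otimes m)=P(f)\otimes m$ on source and target (cf. \ref{dactcor}) and commutes with multiplication by $t$, hence is equivariant for the operators of $V^0\bD_X^e$ and of $V^1\bD_X^e$. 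A routine check using surjectivity of $\pi$ then shows that $F_X^{e*}\pi$ carries the numerator $V^0\bD_X^e A_1^{e-1}M_1$ onto $V^0\bD_X^e A_2^{e-1}M_2$ and carries the denominator appearing in the definition of $b_{A_1}(s)$ onto the corresponding denominator for $b_{A_2}(s)$; it therefore induces a surjection $W_e(A_1)\twoheadrightarrow W_e(A_2)$ of $\mathbb{F}_p$-vector spaces, equivariant for the residual actions of $\theta_1,\dots,\theta_{\gamma e}$, where $W_e(A)$ denotes the quotient from the definition of $b_A(s)$ (cf. \ref{gencor}).

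Next I would invoke that each $\theta_i$ satisfies $\theta_i^p-\theta_i=0$, so that --- because $x^p-x=\prod_{a\in\mathbb{F}_p}(x-a)$ has simple roots --- the commuting family $\{\theta_i\}$ acts semisimply; hence $W_e(A_1)$ and $W_e(A_2)$ split as direct sums of their multi-eigenspaces and the surjection $W_e(A_1)\twoheadrightarrow W_e(A_2)$ decomposes into surjections between matching multi-eigenspaces. Consequently every eigenvalue $0\le n<q^{e}$ occurring for $\{\theta_i\}$ on $W_e(A_2)$ already occurs on $W_e(A_1)$. I expect this middle step is the only one requiring any real thought: the point is precisely that a $\theta_i$-equivariant surjection cannot kill an eigenspace, and this rests on the Artin--Schreier relation $\theta_i^p=\theta_i$; everything surrounding it is bookkeeping.

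To finish, I would take $\{\lambda_i\}$ to be the roots of $b_{A_1}(1-s)$ and $N$ an integer witnessing that $b_{A_1}(s)$ is a $b$-function for $(M_1,A_1)$, so that $\{\,\ceiling{\lambda_i q^{e}}-a\mid 0\le a<q^N\,\}\cup\{0\}$ contains all eigenvalues of $\{\theta_i\}$ on $W_e(A_1)$ for every $e$. By the previous step the same finite set contains all eigenvalues on $W_e(A_2)$, with the same $N$ and the same $\lambda_i$; moreover $b_{A_1}(s)$ has its roots in $(0,1]$ by hypothesis, and $M_2$, being a quotient of the coherent module $M_1$, is coherent, so $b_{A_2}(s)$ is a priori well-posed. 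Hence $b_{A_1}(s)$ satisfies the defining property of a $b$-function for $(M_2,A_2)$, so it lies in the ideal of such $b$-functions; since $b_{A_2}(s)$ is by definition the minimal monic generator of that ideal, $b_{A_2}(s)$ exists and divides $b_{A_1}(s)$.
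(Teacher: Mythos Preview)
Your proposal is correct and follows the same approach as the paper's own proof: construct a $V^0\bD_X^e$-equivariant surjection $V^0\bD_X^e A_1^{e-1}M_1/V^1\bD_X^e A_1^{e-1}M_1 \twoheadrightarrow V^0\bD_X^e A_2^{e-1}M_2/V^1\bD_X^e A_2^{e-1}M_2$ and read off the eigenvalue containment. The paper's proof is extremely terse --- it simply asserts the two images $F_X^{e*}(\pi)(V^j\bD_X^e A_1^{e-1}M_1)=V^j\bD_X^e A_2^{e-1}M_2$ and says the resulting $V^0\bD_X^e$-linear surjection ``proves the proposition'' --- whereas you spell out the right-exactness of $F_X^{e*}$, the $\bD_X^e$-equivariance, and the Artin--Schreier semisimplicity argument that converts the surjection into an inclusion of eigenvalue sets; these are exactly the details the paper leaves implicit.
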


\begin{proof}
$$F_X^{e*}(\pi)(V^0\bD^e_X A^{e-1}_1M_1) = V^0\bD^e_X A^{e-1}_2M_2$$
and 
$$F_X^{e*}(\pi)(V^1\bD^e_XA^{e-1}_1 M_1) = V^1\bD^e_XA^{e-1}_2M_2$$

This gives a $V^0\bD_X^e$-linear map
$$V^0\bD^e_X A^{e-1}_1M_1/V^1\bD^e_XA^{e-1}_21M_1 \twoheadrightarrow V^0\bD^e_XA^{e-1}_2 M_2/V^1\bD^e_XA^{e-1}_2 M_2$$
which proves the proposition.
\end{proof}

\subsubsection{The freely generated case in the affine setting}

It has already been discussed that the existence of $b$-functions is \'etale local on $X$.  In light of \ref{freered}, it is enough to consider the case when $X = Spec(R[t])$, $Z=Spec(R[t]/(t))$, and the generator $M$ is free.   We again set $S=R[t]$ throughout this subsection.

\begin{nota} If $A:  S^{\oplus l} \rightarrow S^{\oplus l}$ and consider $A$ as a matrix of $R$-valued polynomials in $t$.  We can also consider the composition $A^{e-1}=F_S^{(e-1)*}(A)...A$ as a matrix of polynomials.  It will be convenient to decompose these expressions in terms of the degrees of $t$ between $0$ and $q^e-1$.  Define $H^e_n(\tau) \in M_l(R[\tau])$ in the following manner,

$$A^{e-1}=F_S^{(e-1)*}(A)...A = \sum_{0 \leq n < q^{e}}H^e_n(t^{q^e})t^n.$$

\end{nota}

\begin{rem} Lemma \ref{eigenvaluelem} will show that this choice of notation $H^{e}_n(\tau)$ does not conflict with the notation from section $3$.
\end{rem}

\begin{lem}
$$\bD^e_R[t,\theta_1,...,\theta_{\gamma e}]A^{e-1}S^{\oplus l}= \sum_{0 \leq n < q^e}\bD^{e}_R[t] H_n^e(t^{q^e})t^nR^{\oplus l}$$
\end{lem}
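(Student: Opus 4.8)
The plan is to expand $\bD^e_R[t,\theta_1,\ldots,\theta_{\gamma e}]$ as a left $\bD^e_R[t]$-module using the higher Euler operators, and then transport this expansion across $A^{e-1}$. First I would observe that in the local coordinate $t$ the algebra $V^0\bD^e_X = \bD^e_R[t,\theta_1,\ldots,\theta_{\gamma e}]$ is generated over the subalgebra $\bD^e_R[t]$ by the operators $\theta_i = t^{p^{i-1}}\partial_t^{[p^{i-1}]}$. One checks that the commutator of each $\theta_i$ with any element of $\bD^e_R[t,\theta_1,\ldots,\theta_{i-1}]$ lands back in $\bD^e_R[t,\theta_1,\ldots,\theta_{i-1}]$: the only nontrivial commutators are $[t,\theta_i]$ and $[\partial_t^{[p^{j-1}]},\theta_i]$ for small $j$, and these are computed from the divided-power relations, staying inside the smaller subalgebra. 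Hence Lemma \ref{ringdecomplemma} applies with $B = \bD^e_R[t]$, $C = \bD^e_X$, $Q = F^{e*}_X M = F^{e*}_S S^{\oplus l}$, $T = A^{e-1}S^{\oplus l}$, and $s_i = \theta_i$, giving
$$\bD^e_R[t,\theta_1,\ldots,\theta_{\gamma e}]A^{e-1}S^{\oplus l} = \sum_{n \geq 0}\sum_{i=1}^{\gamma e}\theta_i^n\,\bD^e_R[t]\,A^{e-1}S^{\oplus l}.$$

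The next step is to rewrite $\bD^e_R[t]A^{e-1}S^{\oplus l}$ using the decomposition $A^{e-1} = \sum_{0\le n < q^e} H^e_n(t^{q^e})t^n$. Since $\bD^e_R$ acts $q^e$-linearly, the factors $H^e_n(t^{q^e})$ that land in the module come out as $H^e_n(t^{q^e})\,(\text{image of }\bD^e_R)$, while the $t^n$ and the action of $t$ from $\bD^e_R[t]$ contribute arbitrary powers of $t$; this yields $\bD^e_R[t]A^{e-1}S^{\oplus l} = \sum_{0\le n<q^e}\bD^e_R[t]H^e_n(t^{q^e})t^n R^{\oplus l}$, where I am using $S^{\oplus l} = R^{\oplus l}\otimes_R S$ and that $F^{e*}_S$ of a free module is free on the same basis.

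Finally I would show the $\theta_i$'s add nothing new once the $H^e_n(t^{q^e})t^n R^{\oplus l}$ pieces are present. The key computation is that $\theta_i$ acts on a monomial $c\,t^m$ (with $c \in R^{\oplus l}$ fixed by Frobenius) by multiplication by the scalar $\binom{m}{p^{i-1}} \in \mathbb{F}_p$, hence $\theta_i\big(\bD^e_R[t]H^e_n(t^{q^e})t^n R^{\oplus l}\big) \subset \bD^e_R[t]H^e_n(t^{q^e})t^n R^{\oplus l}$, because $\bD^e_R[t]H^e_n(t^{q^e})t^nR^{\oplus l}$ is spanned by elements of the form $(\text{$q^e$-th power})\cdot t^{q^e a + n}\cdot(\text{coeff of }H^e_n)\cdot c$ and $\theta_i$ only rescales these by $\mathbb{F}_p$-scalars. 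Therefore each $\theta_i^n$ stabilizes the right-hand sum, and the inclusion $\supseteq$ (which is the content of the previous step applied with the zero exponent on each $\theta_i$) together with this stability gives the reverse inclusion $\subseteq$, proving equality.

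The main obstacle I anticipate is the bookkeeping in the middle step: carefully justifying that pulling $A^{e-1}$ apart as $\sum H^e_n(t^{q^e})t^n$ interacts correctly with the $q^e$-linearity of $\bD^e_R$ — in particular that the $t^{q^e}$ inside $H^e_n$ can be absorbed into the $\bD^e_R$-action (it is a $q^e$-th power) while the trailing $t^n$ cannot, and that no cross terms between different $n$ appear. The commutator verification for Lemma \ref{ringdecomplemma} is routine but must be done in the divided-power ring rather than the naive Weyl algebra, since $\partial_t^p = 0$ forces one to work with $\partial_t^{[p^{i-1}]}$ throughout.
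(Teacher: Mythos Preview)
Your argument has a genuine gap in the middle step. You claim that
\[
\bD^e_R[t]\,A^{e-1}S^{\oplus l} \;=\; \sum_{0\le n<q^e}\bD^e_R[t]\,H^e_n(t^{q^e})t^nR^{\oplus l},
\]
but only the inclusion $\subseteq$ holds. The algebra $\bD^e_R[t]$ contains no $t$-derivatives at all: $\bD^e_R$ acts coefficientwise on $R[t]$ and commutes with $t$. Hence $\bD^e_R[t]\,A^{e-1}S^{\oplus l}$ is generated over $\bD^e_R[t]$ by the \emph{sums} $\sum_n H^e_n(t^{q^e})t^n v$ for $v\in R^{\oplus l}$, and there is no mechanism inside $\bD^e_R[t]$ to isolate an individual summand $H^e_n(t^{q^e})t^n v$. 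For a concrete counterexample take $R=\boldk$, $l=1$, $e=1$, and $A(t)=1+t$; then $\bD^1_R[t]A^0S=(1+t)\boldk[t]$, while the right-hand side is $\boldk[t]\cdot 1+\boldk[t]\cdot t=\boldk[t]$, a strict over-module. Since you use this equality to deduce the inclusion $\supseteq$ of the lemma (``the content of the previous step applied with the zero exponent on each $\theta_i$''), that direction is not established.

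The paper's proof handles $\supseteq$ precisely by using the Euler operators that you have pushed to the outside via Lemma~\ref{ringdecomplemma}. It observes that each $t^n\partial_t^{[n]}$ for $0\le n<q^e$ lies in the $\mathbb{F}_p$-algebra generated by $\theta_1,\ldots,\theta_{\gamma e}$ and commutes with $t^{q^e}$; applying $t^{q^e-1}\partial_t^{[q^e-1]}$ to $A^{e-1}v$ isolates the top piece $H^e_{q^e-1}(t^{q^e})t^{q^e-1}v$, and then a downward induction using $t^m\partial_t^{[m]}$ peels off each $H^e_m(t^{q^e})t^m v$ in turn. Your third step (stability of the right-hand side under each $\theta_i$) is correct and matches the paper's argument for the inclusion $\subseteq$, but the $\supseteq$ direction genuinely requires the $\theta_i$'s to act \emph{after} the generators, not before, so factoring them out first cannot work.
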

\begin{proof}
To show that
$$\bD^e_R[t,\theta_1,...,\theta_{\gamma e}]A^{e-1}S^{\oplus l} \supset \sum_{0 \leq n < q^e}\bD^{e}_R[t] H_n^e(t^{q^e})t^nR^{\oplus l},$$
consider that for each $0 \leq n < q^e$ the operators $t^n\partial^{[n]}_t$ are in the ring generated by \{$\theta_i\}_{i=1}^e$ and $\mathbb{F}_p$.  These operators commute with $t^{q^e}$. For any $v \in R^{\oplus l}$ the polynomial $H_{q^e-1}^e(t^{q^e})t^{q^e-1}v$ is in $\bD^e_R[t,\theta_1,...,\theta_{\gamma e}]A^{e-1}S^{\oplus l}$ because it is equal to $t^{q^e-1}\partial_t^{[q^e-1]}$ applied to $A^{e-1}v$.  If $m$ is an integer and $\{H_{n}^e(t^{q^e})t^nv\}_{n > m}$ is in $\bD^e_R[t,\theta_1,...,\theta_{\gamma e}]A^{e-1}S^{\oplus l}$ then so is $H_m^e(t^{q^e})t^mv$ because it is a non-zero scalar multiple of the trailing term of $t^{m}\partial_t^{[m]}A^{e-1}v$ and the higher terms are in $\bD^e_R[t,\theta_1,...,\theta_{\gamma e}]A^{e-1}S^{\oplus l}$.  The well-ordering principle then proves the containment.\\

For the opposite containment, first consider that 
$$A^{e-1}S^{\oplus l} \subset \sum_{0 \leq n < q^e}\bD^{e}_R[t] H_n^e(t^{q^e})t^nR^{\oplus l}$$ by the definition of $H_n^e$.  Thus, it suffices to show that  $\sum_{0 \leq n < q^e}\bD^{e}_R[t] H_n^e(t^{q^e})t^nR^{\oplus l}$ is  a left $\bD^e_R[t,\theta_1,...,\theta_{\gamma e}]$-module.  For this, it is only required to show that it is closed under multiplication by $\theta_i$.  By linearity, it is enough to check for each $i$, $n$, $d \in \mathbb{N}$, and $v \in R^{\oplus l}$ that $\theta_i t^dH_n^e(t^{q^e})t^nv = \binom{n+d}{p^{i-1}}t^{d}H^e_n(t^{q^e})t^{n}v$ is contained in $\sum_{0 \leq n < q^e}\bD^{e}_R[t] H_n^e(t^{q^e})t^nR^{\oplus l}$.  This is clear.\\
\end{proof}

\begin{thm} Consider a unit $F_S$-module $(\mM, F)$ generated by $A: S^{\oplus l} \rightarrow S^{\oplus l}$.
For all $e \geq 1$ and for all $0 \leq m < q^e$, 
$$\bD^e_RH_0^e(\tau)R^{\oplus l}+...+\bD^e_RH^e_{m-1}(\tau)R^{\oplus l} = \bD^e_RH_0^e(\tau)R^{\oplus l}+...+\bD^e_RH_m^e(\tau)R^{\oplus l}$$
as subsets of $R[\tau]^{\oplus l}$ implies that
$$(\bD^e_R[t,\theta_1,...,\theta_e]A^{e-1}S^{\oplus n})_{m}=(\bD^e_R[t,\theta_1,...,\theta_e]tA^{e-1}S^{\oplus n})_{m}$$ where subscript $m$ denotes taking the $m$-th eigenspace for the operators $\{\theta_i\}$.
\end{thm}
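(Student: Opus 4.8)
Write $N := \bD^e_R[t,\theta_1,\dots,\theta_{\gamma e}]A^{e-1}S^{\oplus l} = V^{0}\bD^e_X A^{e-1}S^{\oplus l}$ and recall that $V^{1}\bD^e_X A^{e-1}S^{\oplus l} = \bD^e_R[t,\theta_1,\dots,\theta_{\gamma e}]tA^{e-1}S^{\oplus l}$ is the right-hand side of the displayed equality to be proved. Since $V^{1}\bD^e_X A^{e-1}S^{\oplus l}$ is a $\theta_\bullet$-stable submodule of $N$, its $m$-th eigenspace is automatically contained in $N_m$; so the plan is to prove the one inclusion $N_m \subseteq V^{1}\bD^e_X A^{e-1}S^{\oplus l}$ under the hypothesis. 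The plan has two halves: first, read off the $\theta_\bullet$-eigenspace decomposition of $N$ from the preceding lemma via Lucas' theorem; second, translate the hypothesis, which lives in $R[\tau]^{\oplus l}$, into a containment inside $N$.

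For the first half: by the preceding lemma $N = \sum_{0 \le n < q^{e}} \bD^e_R[t]\,H^e_n(t^{q^e})t^n R^{\oplus l}$, and a typical element of the summand $t^{j}\bD^e_R H^e_n(t^{q^e})t^n R^{\oplus l}$ is an $R^{\oplus l}$-linear combination of monomials $t^{\,j+q^e k+n}$ with constant coefficient vectors, because operators in $\bD^e_R$ are $R$-linear in $t$, commute with multiplication by $t$, and act diagonally on $R^{\oplus l}$. On such a monomial $\theta_i=t^{p^{i-1}}\partial_t^{[p^{i-1}]}$ acts by $\binom{j+q^ek+n}{p^{i-1}}$, which by Lucas' theorem equals the $(i-1)$st base-$p$ digit of $j+q^ek+n$; for $1\le i\le \gamma e$ that digit depends only on $(j+n)\bmod q^{e}$. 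Hence the whole summand lies in the eigenspace attached to $(j+n)\bmod q^{e}$. Since the $\theta_i$ commute and satisfy $\theta_i^{p}-\theta_i=0$, the $\mathbb{F}_p$-algebra they generate is a product of copies of $\mathbb{F}_p$, so $N$ is the direct sum of its $\theta_\bullet$-eigenspaces and therefore
$$N_m \;=\; \sum_{\substack{j\ge 0,\ 0\le n<q^{e}\\ j+n\equiv m\ (q^{e})}} t^{j}\,\bD^e_R H^e_n(t^{q^e})t^n R^{\oplus l}.$$
Splitting off the unique term with $j=0$ (forcing $n=m$) gives $N_m = \bD^e_R H^e_m(t^{q^e})t^m R^{\oplus l} + K$, where $K$ is the sum of the remaining summands, all of which carry a factor of $t$ and hence lie in $tN$. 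Moreover $tN \subseteq V^{1}\bD^e_X A^{e-1}S^{\oplus l}$, because $t\cdot V^{0}\bD^e_X \subseteq V^{1}\bD^e_X$ directly from the definition of the $V$-filtration (multiplying by $t\in\mI_Z$ raises $\mI_Z$-order by one) and $N$ is a $V^{0}\bD^e_X$-module. So everything reduces to showing that the hypothesis forces $\bD^e_R H^e_m(t^{q^e})t^m R^{\oplus l}\subseteq tN$.

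For the second half: after cancelling the common terms, the hypothesis says exactly $H^e_m(\tau)R^{\oplus l}\subseteq \sum_{n<m}\bD^e_R H^e_n(\tau)R^{\oplus l}$ in $R[\tau]^{\oplus l}$. Fix $w\in R^{\oplus l}$ and write $H^e_m(\tau)w=\sum_{n<m}\sum_\alpha P_{n,\alpha}\bigl(H^e_n(\tau)w_{n,\alpha}\bigr)$ with $P_{n,\alpha}\in\bD^e_R$ and $w_{n,\alpha}\in R^{\oplus l}$ (finite sums). The substitution $\tau\mapsto t^{q^e}$ is $\bD^e_R$-equivariant, since operators in $\bD^e_R$ are $q^e$-linear and hence commute with multiplication by $\tau$ on one side and by $t^{q^e}$ on the other; applying it, then multiplying by $t^{m}$, and using that $P_{n,\alpha}$ and the matrices $H^e_n(t^{q^e})$ commute with multiplication by $t$, I obtain
$$t^{m}H^e_m(t^{q^e})w \;=\; \sum_{n<m}\sum_\alpha P_{n,\alpha}\Bigl(t^{\,m-n}\cdot H^e_n(t^{q^e})t^{n}w_{n,\alpha}\Bigr).$$
Here each $H^e_n(t^{q^e})t^{n}w_{n,\alpha}$ lies in $N$ (it is one of the generators supplied by the preceding lemma, since $n<m<q^{e}$), and because $m-n\ge 1$ the element $t^{\,m-n}\cdot H^e_n(t^{q^e})t^{n}w_{n,\alpha}$ lies in $tN$; as $P_{n,\alpha}\in\bD^e_R\subseteq V^{0}\bD^e_X$ commutes with multiplication by $t$ and stabilizes $N$, each summand lies in $tN$, whence $t^{m}H^e_m(t^{q^e})w\in tN$. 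Letting $w$ vary and applying $\bD^e_R$ gives $\bD^e_R H^e_m(t^{q^e})t^{m}R^{\oplus l}\subseteq tN$, so $N_m\subseteq tN\subseteq V^{1}\bD^e_X A^{e-1}S^{\oplus l}$, and with the automatic reverse inclusion of eigenspaces the two $m$-th eigenspaces coincide.

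\textbf{Main obstacle.} The delicate point is the translation in the last paragraph: the hypothesis concerns the $\bD^e_R$-submodules generated by the matrices $H^e_n(\tau)$ inside the auxiliary object $R[\tau]^{\oplus l}$, while the conclusion concerns $\theta_\bullet$-eigenspaces of the $V^{0}\bD^e_X$-module $N$. Making the dictionary precise — that $\tau\mapsto t^{q^e}$ intertwines the two $\bD^e_R$-actions, that multiplying the resulting identity by $t^{m}$ turns every term indexed by $n<m$ into something divisible by $t^{\,m-n}$ and hence into $tN$, and that $\bD^e_R$-operators commute with multiplication by $t$ so that membership in $tN$ is preserved under them — is where the care is needed; by contrast the eigenspace bookkeeping via Lucas' theorem is routine once the preceding lemma's description of $N$ is available.
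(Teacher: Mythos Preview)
Your proof is correct and follows essentially the same route as the paper's: both use the preceding lemma's decomposition of $N$, invoke Lucas' theorem to locate the $m$-th eigenspace, observe that every summand carrying a positive power of $t$ already lies in $tN\subseteq V^{1}\bD^e_X A^{e-1}S^{\oplus l}$, and then dispatch the remaining $n=m$ term by substituting $\tau\mapsto t^{q^e}$ into the hypothesis and multiplying by $t^{m}$. The only difference is presentational---the paper fixes an element $x\in N_m$ and unwinds it, whereas you compute $N_m$ as a sum of pieces first---but the content is the same.
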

\begin{proof} By the previous lemma,
$$\bD^e_R[t,\theta_1,...,\theta_e]A^{e-1}S^{\oplus l}= \sum_{0 \leq n < q^e}\bD^{e}_R[t] H_n^e(t^{q^e})t^nR^{\oplus l}$$

Let $x \in (\bD^e_R[t,\theta_1,...,\theta_{\gamma e}]A^{e-1}S^{\oplus l})_m$ and write 
$$x = \sum_{0 \leq n < q^e} \sum_i \sum_{0 \leq k < q^e} P_{k,n,i}(t^{q^e})t^kH_n^e(t^{q^e})t^nv_{n,i}$$
where $P_{k,n,i}(t^{q^e}) \in \bD_R^e[t^{q^e}]$ and $v_{n,i} \in R^{\oplus l}$.\\

We may rewrite this sum as

$$x =\sum_i \sum_{0 \leq u < 2q^e-1}\sum_{0 \leq n < q^e}P_{u-n,n,i}(t^{q^e})H_n^{e}(t^{q^e})t^uv_{n,i}.$$

By Lucas' theorem 
$$\theta_l P_{u-n,n,i}(t^{q^e})H_n^{e}(t^{q^e})t^uv_{n,i} = \binom{u}{p^{l-1}}P_{u-n,n,i}(t^{q^e})H_n^{e}(t^{q^e})t^uv_{n,i}.$$

Yet $x$ lives in the $m^{th}$ eigenspace so in this expansion of $x$, we can restrict to taking the sum only over those $u$ which are equal to $m$ modulo $q^e$ (of which there are at most two).  We get the expression
$$x = \sum_i\left(\sum_{0 \leq n < q^e} P_{m-n,n,i}(t^{q^e})H_n^e(t^{q^e})t^mv_{n,i} + \sum_{0 \leq n < q^e}P_{m+q^e-n,n,i}(t^{q^e})H_n^e(t^{q^e})t^{m+q^e}v_{n,i}\right).$$

Many of these summands are zero because $P_{k,n,i}(t^{q^e})$ are defined (by convention) to be zero when $k < 0$.  Thus,
$$x = \sum_i\left(\sum_{0 \leq n \leq m} P_{m-n,n,i}(t^{q^e})H_n^e(t^{q^e})t^mv_{n,i} + \sum_{m < n < q^e}P_{m+q^e-n,n,i}(t^{q^e})H_n^e(t^{q^e})t^{m+q^e}v_{n,i}\right).$$

For $n < m$, $H_n^{e}(t^{q^e})t^mv_{n,i} = t^{m-n}H_n^{e}(t^{q^e})t^nv_{n,i}$ is in $(\bD^e_R[t,\theta_1,...,\theta_{\gamma e}]tA^{e-1}S^{\oplus l})_{m}$ by the previous lemma. Also, $\sum_{m < n < q^e}P_{m+q^e-n,n,i}(t^{q^e})H_n^e(t^{q^e})t^{m+q^e}v_{n,i}$ is in $(\bD^e_R[t,\theta_1,...,\theta_{\gamma e}]tA^{e-1}S^{\oplus l})_{m}$.\\

Thus to show that $x \in (\bD^e_R[t,\theta_1,...,\theta_{\gamma e}]tA^{e-1}S^{\oplus l})_{m}$ it is enough to show that the $m^{th}$ term $\sum_i P_{0,m,i}(t^{q^e})H_m^e(t^{q^e})t^mv_{m,i}$ is in $(\bD^e_R[t,\theta_1,...,\theta_{\gamma e}]tA^{e-1}S^{\oplus l})_{m}$.\\

Write $P_{0,m,i}(t^{q^e}) = \sum_j P_{j,i} t^{jq^e}$ where $P_{j,i} \in \bD_R^e$.  By assumption, for each $i$ there exist $Q_{j,w,i} \in \bD^e_R$ and $v_{j,w,i} \in R^{\oplus l}$ such that $\sum_{0 \leq w < m} Q_{j,w,i}H_w^e(\tau)v_{j,w,i} = P_{j,i}H_m^e(\tau)v_{m,i}$.  Setting $\tau = t^{q^e}$, multiplying both sides by $t^{jq^e}t^m$, and summing over $i,j$ we obtain
\begin{eqnarray*}
\sum_i \sum_j \sum_{0 \leq w < m} Q_{j,w,i}t^{jq^e}t^{m-w}H_w(t^{q^e})t^wv_{j,w,i} &=& \sum_i \sum_j P_{j,i}t^{jq^e}H^e_m(t^{q^e})t^mv_{m,i}\\
 &=& \sum_i P_{0,m,i}(t^{q^e})H^e_m(t^{q^e})v_{m,i}.
\end{eqnarray*}
 
By the previous lemma, the left side of the above equation is contained in $(\bD^e_R[t,\theta_1,...,\theta_{\gamma e}]tA^{e-1}S^{\oplus l})_{m}$ (the sum is over $w < m$).

\end{proof}

\begin{cor}\label{weightcor}
If there is a non-zero eigenvector of weight $m$ for the operators $\{\theta_i\}$ in 
$$\bD^e_R[t,\theta_1,...,\theta_e]A^{e-1}S^{\oplus l}/\bD^e_R[t,\theta_1,...,\theta_e]tA^{e-1}S^{\oplus l}$$ 
then $$\bD^e_RH_0^e(\tau)R^{\oplus l}+...+\bD^e_RH^e_{m-1}(\tau)R^{\oplus l} \neq \bD^e_RH_0^e(\tau)R^{\oplus l}+...+\bD^e_RH_m^e(\tau)R^{\oplus l}.$$
\end{cor}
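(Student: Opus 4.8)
The plan is to obtain the corollary as the contrapositive of the preceding theorem, the only extra ingredient being that the weight-$m$ eigenspace of a quotient of $\theta$-modules is the quotient of the weight-$m$ eigenspaces.

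First I would set $U=\bD^e_R[t,\theta_1,\dots,\theta_e]A^{e-1}S^{\oplus l}$ and $V=\bD^e_R[t,\theta_1,\dots,\theta_e]tA^{e-1}S^{\oplus l}\subset U$, and recall that the commuting operators $\theta_1,\dots,\theta_e$ preserve both $U$ and $V$ (they act on the quotient $V^0\bD^e_X/V^1\bD^e_X$, and $V$ is visibly a $\theta$-submodule of $U$). Since each $\theta_i$ satisfies the Artin--Schreier relation $x^p-x=\prod_{a\in\mathbb{F}_p}(x-a)$, which splits with distinct roots in $\mathbb{F}_p$, each $\theta_i$ acts semisimply; as they commute one gets simultaneous eigenspace decompositions $U=\bigoplus_n U_n$ and $V=\bigoplus_n V_n$. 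Because $V$ is a $\theta$-submodule it is spanned by simultaneous eigenvectors, so $V_n=V\cap U_n$, and therefore the $m$-th eigenspace of the quotient is $(U/V)_m\cong U_m/V_m$. Hence the hypothesis of the corollary — a nonzero weight-$m$ eigenvector in $U/V$ — is equivalent to $U_m\neq V_m$.

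Then I would simply apply the preceding theorem in contrapositive form: if we had the equality
\[
\bD^e_RH_0^e(\tau)R^{\oplus l}+\dots+\bD^e_RH^e_{m-1}(\tau)R^{\oplus l}=\bD^e_RH_0^e(\tau)R^{\oplus l}+\dots+\bD^e_RH_m^e(\tau)R^{\oplus l}
\]
in $R[\tau]^{\oplus l}$, then the theorem forces $U_m=V_m$, contradicting the hypothesis. This yields the asserted inequality of submodules, which is exactly the conclusion.

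I do not expect a genuine obstacle here: all the real work is in the preceding theorem, and the only step deserving comment is the exactness of ``take the $m$-th $\theta$-eigenspace'' on the short exact sequence $0\to V\to U\to U/V\to 0$, which is immediate from semisimplicity of the commuting $\theta$-action. If one wished to avoid even that abstraction, one could argue directly: lift a weight-$m$ eigenvector of $U/V$ to $u\in U$, apply the eigenprojector onto $U_m$ — a polynomial expression in the $\theta_i$, hence preserving both $U$ and $V$ — to replace $u$ by an element of $U_m$ with the same (nonzero) class in $U/V$, and observe that such an element lies in $U_m\setminus V_m$.
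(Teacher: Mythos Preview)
Your proposal is correct and matches the paper's intent: the corollary is stated without proof in the paper, being the immediate contrapositive of the preceding theorem. Your extra remark that $(U/V)_m\cong U_m/V_m$ via semisimplicity of the $\theta_i$-action is exactly the bridge needed between ``nonzero weight-$m$ eigenvector in $U/V$'' and ``$U_m\neq V_m$'', and is the only point worth spelling out.
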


\subsubsection{Relationship to list test modules}

In this subsection, we continue working in the context of the previous subsection.  We will show, similar to the case of the first local cohomology module, that the (infinite) behavior of the eigenvalues of $\{\theta_i\}$ is completely controlled by the jumping numbers of list test modules when $R$ is smooth and of essentially finite type over $\boldk$.  First, we begin by relating the eigenvalues to the sets $S_e$.

\begin{lem}\label{eigenvaluelem} If $(\mM,F)$ is generated by $(M,A(t))$, $H_n^1(\tau) = \sum_kA_{k,n}\tau^k$, and $S_e$ is the set associated to $\{A_{k,n}\}$ in Section $3$, then the eigenvalues of the action of $\{\theta_i\}_{i=1}^{\gamma e}$ on
$$\bD^e_R[t,\theta_1,...,\theta_{\gamma e}]F^{(e-1)*}_S(A)...AS^{\oplus l}/\bD^e_R[t,\theta_1,...,\theta_{\gamma e}]tF^{(e-1)*}_S(A)...AS^{\oplus l}$$ 
are contained in the set $\{m| \frac{m}{q^{e}} \in S_{e-1} \} \cup \{0\}$.
\end{lem}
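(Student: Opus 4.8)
The plan is to move, via Corollary~\ref{weightcor}, from eigenvalues of the Euler operators to strict jumps in the ascending chain of $\bD^e_R$-modules $\bD^e_R H^e_0(\tau)R^{\oplus l}\subseteq\bD^e_R\bigl(H^e_0(\tau)R^{\oplus l}+H^e_1(\tau)R^{\oplus l}\bigr)\subseteq\cdots$, and then to recognise each partial sum as the $q^e$-th power inflation of a list test module of $\{A_{k,n}\}$, so that the jumps of the chain are exactly the points of $S_{e-1}$. As a preliminary I would note that the notation $H^e_n(\tau)$ is unambiguous: the Section~3 recursion building $H^e_n$ from $\{A_{k,n}\}$ is nothing but the $\tau$-graded decomposition $A(t)^{e-1}=F^{(e-1)*}_S(A)\cdots A=\sum_n H^e_n(t^{q^e})t^n$ used in Section~4; moreover the uniform bound $\deg_\tau H^e_n(\tau)\le N$ from Section~3 places all the modules considered below inside the finite-rank free unit $F^{q^e}$-module $W=R^{\oplus l}\oplus R^{\oplus l}\tau\oplus\cdots\oplus R^{\oplus l}\tau^N$, on which $\bD^e_R$ acts diagonally.

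The quotient in the statement is an $\mathbb F_p$-vector space on which the commuting operators $\theta_i$ each satisfy the separable equation $x^p-x$, hence it is a direct sum of simultaneous eigenspaces and its eigenvalue set is a well-defined subset of $\{0,\dots,q^e-1\}$. The eigenvalue $0$ lies in the target set by convention, so fix a nonzero eigenvalue $m$ with $0<m<q^e$. Corollary~\ref{weightcor} gives
\[
\sum_{n=0}^{m-1}\bD^e_R H^e_n(\tau)R^{\oplus l}\;\subsetneq\;\sum_{n=0}^{m}\bD^e_R H^e_n(\tau)R^{\oplus l}
\]
inside $W$. Theorem~\ref{structurethm} identifies the $\bD^e_R$-module generated by $H^e_n(\tau)R^{\oplus l}$ with $\bigl((H^e_n(\tau)R^{\oplus l})^{[1/q^e]}\bigr)^{[q^e]}$, and since $F^{e*}$ is right exact the operation $(-)^{[q^e]}$ commutes with finite sums of submodules of $W$, so $\sum_{n=0}^{j}\bD^e_R H^e_n(\tau)R^{\oplus l}=\bigl(\sum_{n=0}^{j}(H^e_n(\tau)R^{\oplus l})^{[1/q^e]}\bigr)^{[q^e]}$.

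On the other hand, unwinding $\tau(\{A_{k,n}\},\lambda,e-1)=\sum_{\lambda'\le\lambda}(H^e_{\ceiling{\lambda' q^e}-1}(\tau)R^{\oplus l})^{[1/q^e]}$, and using that as $\lambda'$ ranges over $(0,\lambda]$ the index $\ceiling{\lambda' q^e}-1$ ranges over exactly $\{0,\dots,\ceiling{\lambda q^e}-1\}$, one gets $\tau(\{A_{k,n}\},\lambda,e-1)=\sum_{n=0}^{\ceiling{\lambda q^e}-1}(H^e_n(\tau)R^{\oplus l})^{[1/q^e]}$; since $m$ is an integer this means the two partial sums above (for $j=m-1$ and $j=m$) equal $\tau(\{A_{k,n}\},m/q^e,e-1)^{[q^e]}$ and $\tau(\{A_{k,n}\},(m+1)/q^e,e-1)^{[q^e]}$ respectively. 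Thus Corollary~\ref{weightcor} says these two inflations differ, and since $F^e$ is faithfully flat the assignment $\mN\mapsto\mN^{[q^e]}$ reflects containments, so $\tau(\{A_{k,n}\},m/q^e,e-1)\subsetneq\tau(\{A_{k,n}\},(m+1)/q^e,e-1)$.

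Finally I would conclude $m/q^e\in S_{e-1}$: every $\lambda'>m/q^e$ satisfies $\ceiling{\lambda' q^e}\ge m+1$, so by monotonicity of $\tau(\{A_{k,n}\},-,e-1)$ in its argument, $\tau(\{A_{k,n}\},\lambda',e-1)\supseteq\tau(\{A_{k,n}\},(m+1)/q^e,e-1)\supsetneq\tau(\{A_{k,n}\},m/q^e,e-1)$, hence $\tau(\{A_{k,n}\},\lambda',e-1)\ne\tau(\{A_{k,n}\},m/q^e,e-1)$; by definition this is precisely $m/q^e\in S_{e-1}$, and $0<m<q^e$ forces this number into $(0,1)$, which is in range. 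Everything is formal once the two displays identifying partial sums with list test modules are established; I expect the only point needing real care to be the $\ceiling{\lambda q^e}$ index bookkeeping in those identifications — the passage through Theorem~\ref{structurethm} and through faithful flatness of Frobenius is routine.
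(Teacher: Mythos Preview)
Your proof is correct and follows essentially the same route as the paper's: identify the two notions of $H^e_n(\tau)$, invoke Corollary~\ref{weightcor} to turn a nonzero eigenvalue $m$ into a strict jump in the $\bD^e_R$-module chain, recognise the partial sums as $\tau(\{A_{k,n}\},m/q^e,e-1)^{[q^e]}$ and $\tau(\{A_{k,n}\},(m+1)/q^e,e-1)^{[q^e]}$ via Theorem~\ref{structurethm}, and descend by faithful flatness of Frobenius. The paper's version is terser, asserting the identification with the inflated list test modules and the passage to $S_{e-1}$ without spelling out the $\ceiling{\lambda q^e}$ bookkeeping or the monotonicity argument you supply, but the content is the same.
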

\begin{proof}
In the canonical basis for $S^{\oplus l}$, $F^{e*}A=A(t)^{[q^e]}$ in the notation of section $3$.  The definition of the matrix $A(t)^{e-1}$ given in Section $3$ coincides with the definition of the generator $A(t)^{e-1}$ given in Section $4$.  Hence, the corresponding definitions for $H_n^e(\tau)$ that we attached to $A^{e-1}$ yield the same polynomials $H_n^e(\tau)$ in the list test module and generating morphism cases.\\

By \ref{weightcor}, if there is a non-zero eigenvector of weight $m \neq 0$ in $$\bD^e_R[t,\theta_1,...,\theta_{\gamma e}]A^{e-1}S^{\oplus l}/\bD^e_R[t,\theta_1,...,\theta_{\gamma e}]tA^{e-1}S^{\oplus l}$$ then
$$\bD^e_RH_0^e(\tau)R^{\oplus l}+...+\bD^e_RH^e_{m-1}(\tau)R^{\oplus l} \neq \bD^e_RH_0^e(\tau)R^{\oplus l}+...+\bD^e_RH_m^e(\tau)R^{\oplus l}$$
which implies
$$\tau(\{A_{k,n}\},\frac{m}{q^{e}},e-1)^{[q^e]} \neq \tau(\{A_{k,n}\},\frac{m+1}{q^{e}},e-1)^{[q^e]}.$$
By the faithful flatness of Frobenius, the last inequality is true is if and only if $\frac{m}{q^{e}} \in S_{e-1}$.\\

\end{proof}

\subsubsection{Global $b$-functions}

The next theorem explicitly describes how the jumping numbers control the (infinite) behavior of the eigenvalues of the operators $\theta_i$ when is $X$ is any smooth $F$-finite scheme of essentially finite type over $\boldk$.

\begin{thm}\label{eigenvaluethm} Suppose $(\mM,F)$ a unit $F$-module generated on $X$ which is locally generated.  
\begin{enumerate}
\item For any coherent generator $A: M \rightarrow F^*_XM$ the polynomial $b_A(s)$ exists and has rational roots.
\item If $A_{min}$ is the minimal coherent root morphism of $\mM$ then
\begin{enumerate}
\item $b_{A_{min}}(s)$ exists and has rational roots.
\item For any root morphism $A:M \rightarrow F^*_XM$, there exists $m$ such that the polynomial $b_{\bD^{m}A}(s)$ divides $b_{A_{min}}(s)$.

\end{enumerate}
\end{enumerate}
\end{thm}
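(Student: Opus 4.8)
The strategy is to prove (1) by reducing, in stages, to the case of a free generator over an affine base, where the statement is obtained by combining \ref{eigenvaluelem}, \ref{Sethm}, and the discreteness and rationality of the jumping numbers of a list test module; then (2)(a) is a special case of (1), and (2)(b) follows formally from (2)(a) and \ref{nilpotentpro}. First I would reduce to the affine case: by the remarks following the definition of $b_A(s)$ and by \ref{moritaetale}, the existence of a nonzero $b_A(s)$ and the shape of its roots are \'etale-local on $X$; so $b_A$ exists iff each restriction to a member of a finite \'etale cover has a $b$-function, and $b_A$ divides the least common multiple of those, so it is enough to treat $X=Spec(R[t])$ with $R$ smooth of essentially finite type over $\boldk$ and $Z=\{t=0\}$. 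Next I would reduce to a free generator: given a coherent generator $A\colon M\to F_X^{*}M$ over $S=R[t]$, choose a surjection $\pi\colon S^{\oplus l}\twoheadrightarrow M$ ($S$ noetherian, $M$ coherent); since $S^{\oplus l}$ is projective and $F_X^{*}$ is exact ($F_X$ is flat on the regular ring $R$), $F_X^{*}\pi$ is surjective and $A\pi$ lifts to $\widetilde{A}\colon S^{\oplus l}\to F_X^{*}S^{\oplus l}$ fitting into a commutative square over $\pi$. By \ref{freered} it then suffices to produce $b_{\widetilde{A}}(s)$ with rational roots.

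For the free affine case I would write $F_S^{*}\widetilde{A}$ in the form determined by $H^{1}_{n}(\tau)=\sum_{k}A_{k,n}\tau^{k}$, obtaining the list-test-module datum $\{A_{k,n}\}\subset M_l(R)$. By \ref{eigenvaluelem} the eigenvalues of $\{\theta_i\}_{i=1}^{\gamma e}$ on $W_e(\widetilde{A}):=\bD_R^{e}[t,\theta_1,\dots,\theta_{\gamma e}]\widetilde{A}^{e-1}S^{\oplus l}/\bD_R^{e}[t,\theta_1,\dots,\theta_{\gamma e}]t\widetilde{A}^{e-1}S^{\oplus l}$ lie in $\{m:\tfrac{m}{q^{e}}\in S_{e-1}\}\cup\{0\}$, where $S_{e}$ is the set attached to $\{A_{k,n}\}$ in Section 3; and by \ref{Sethm} there is $N_0$ with $S_{e}\subset\{(\ceiling{\lambda q^{e+1}}-a)/q^{e+1}:\lambda\in J,\ 0\le a<q^{N_0}\}$ for all $e\ge N_0$, where $J$ is the (finite, rational) set of jumping numbers of $\{A_{k,n}\}$. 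Hence for $e\ge N_0+1$ every eigenvalue on $W_e(\widetilde{A})$ has the form $\ceiling{\lambda q^{e}}-a$ with $\lambda\in J$ and $0\le a<q^{N_0}$. For each of the finitely many $e\le N_0$ the eigenvalue set of $W_e(\widetilde{A})$ is finite (the $\theta_i$ satisfy $x^{p}-x=0$, so there are at most $q^{e}$ eigenvalue tuples); let $E$ be the resulting finite set of pairs $(e,m)$ with $m\neq 0$, and set
$$b(s)=\prod_{\lambda\in J}\bigl(s-(1-\lambda)\bigr)\cdot\prod_{(e,m)\in E}\bigl(s-(1-m/q^{e})\bigr).$$
Its roots lie in $(0,1]$ and are rational, and the roots of $b(1-s)$ are exactly $J\cup\{m/q^{e}:(e,m)\in E\}$; taking $N:=N_0$, the set $\{\ceiling{\lambda_i q^{e}}-a:\ b(1-\lambda_i)=0,\ 0\le a<q^{N}\}\cup\{0\}$ contains every eigenvalue on $W_e(\widetilde{A})$ for all $e$ — for $e>N_0$ via the $\lambda_i\in J$ (with $a<q^{N_0}$), and for $e\le N_0$ via the term $\lambda_i=m/q^{e}$ (with $a=0$). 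Thus $b(s)$ is a $b$-function for $\widetilde{A}$, so $b_{\widetilde{A}}(s)$ exists and, being a monic divisor of $b(s)$, has rational roots (if $J=\varnothing=E$ this gives $b=1$, which is correct since then every eigenvalue is $0$).

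Unwinding the two reductions gives (1), and (2)(a) is the case $A=A_{min}$ of (1) since $A_{min}$ is a coherent root generator. For (2)(b), let $A\colon M\to F_X^{*}M$ be a root morphism of $\mM$. Since $A$ is a root morphism $M$ embeds in $\mM$, and by the theory of minimal generators \cite{Bli} the minimal coherent root $M_{min}$ is the stable value $M^{[0]}\subseteq M$; using exactness of $F_X^{*}$ one checks $A(M_{min})\subseteq F_X^{*}M_{min}$ and that $A$ restricts to $A_{min}$ on $M_{min}$, so $A$ and $A_{min}$ sit in a commutative square whose vertical maps are the inclusions $M_{min}\hookrightarrow M$ and $F_X^{*}M_{min}\hookrightarrow F_X^{*}M$, with $(M_{min},A_{min})$ minimal and $b_{A_{min}}(s)$ existing by (2)(a). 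Then \ref{nilpotentpro}(2) provides an $m$ with $b_{\bD^{m}A}(s)$ existing and dividing $b_{A_{min}}(s)$, which is (2)(b).

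I expect the crux to be the free affine case, and within it the careful translation of the eigenvalue bound of \ref{eigenvaluelem} (phrased through $S_{e-1}$ and $q^{e}$) via \ref{Sethm} into precisely the form demanded by the definition of $b_A(s)$, together with the treatment of the finitely many small $e$ — where the root $\widetilde{A}$ may carry nilpotents and yield extra eigenvalues — which is absorbed by adjoining finitely many rational roots. The remaining steps are formal once \ref{freered} and \ref{nilpotentpro} are available.
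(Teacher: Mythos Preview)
Your proposal is correct and follows essentially the same route as the paper's proof: \'etale-local reduction, then reduction to a free generator via \ref{freered}, then the combination of \ref{eigenvaluelem} with \ref{Sethm} for part (1), with (2)(a) as a special case and (2)(b) via \ref{nilpotentpro}. The paper's own proof is a two-line sketch citing exactly \ref{Sethm}, \ref{eigenvaluelem}, and \ref{nilpotentpro}; you have simply made explicit the free-resolution step (which the paper only states in the introduction's ``Strategy of proof''), the handling of the finitely many small $e$ by adjoining extra rational roots, and the inclusion $M_{min}\subseteq M$ needed to invoke \ref{nilpotentpro}(2).
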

\begin{proof}\

\begin{enumerate}
\item Working \'etale locally, it will follow directly from \ref{Sethm} and \ref{eigenvaluelem}.
\item $(a)$ is clear from $1$.  $(b)$ follows from \ref{nilpotentpro}.
\end{enumerate}
\end{proof}
As the minimal root generator $A_{min}$ is a global invariant of the unit $F$-module $(\mM,F)$, the previous theorem motivates us to make the following definition

\begin{df}(The global $b$-function) If $(\mM,F)$ is a locally finitely generated unit $F$-module on $X$ then the global $b$-function is
$b_{\mM}(s) = b_{A_{min}}(s)$ where $A_{min}$ is the unique minimal coherent root morphism of $(\mM,F)$.
\end{df}

\subsection{Examples}
\begin{exa}(Free cyclic generators of low degree) If $l=1$ and $A(t): R[t] \rightarrow R[t]$ is such that each $H^1_n(\tau)$ is constant, then the roots of $b_A(s)$ are of the form $1-\lambda$ where $\lambda$ varies over the jumping numbers for the list $H^1_0,...,H^1_{q-1}$.  This follows from \ref{cyclicevexa}.  Conversely, given a list $r_0,...,r_{q-1} \in R$ we can construct an associated unit $F$-module on $R[t]$ generated by $A(t) = \sum_{0 \leq i < q} r_0 t^i: R[t] \rightarrow R[t]$ such that $b_A(s)$ is determined by the jumping numbers of the list.
\end{exa}
\begin{exa}(The free resolution of the first local cohomology module)
Let $f \in R$, $S=R[t]$, and $\Gamma_f: Spec(R) \rightarrow Spec(S)$ the graph of $f$.  The first local cohomology module $(\Gamma_f)_+\mO_X$ has coherent generator
 $$\delta: R=S/(f-t)S \rightarrow S/(f-t)S=R \text{ by } \overline{p(t)} \mapsto (f-t)^{p-1}\overline{p(t)}.$$ 
and free resolution $\tilde{d}: S \rightarrow S$ by multiplication by $(f-t)^{p-1}$.\\

Combining the previous example with \ref{testidealexa},  $b_{\tilde{\delta}}$ and $b_{\delta}(s)$ have roots contained in the set of $F$-jumping exponents of $f$ in $[0,1)$.  In \cite{Mus} it is shown that the roots of $b_{\delta}(s)$  are precisely the $F$-jumping exponents of $f$ in $[0,1)$. This is the characteristic $p$ analogue of the characteristic $0$ statement (see \cite{BS} or \cite{ELSV}) that if $\lambda$ is a jumping exponent then $b(-\lambda)=0$.
\end{exa}

\begin{exa}\label{tameexample}(Pushforward of rank one tame local systems onto $\mathbb{A}^1$)\
Let $X=\mathbb{A}^1=Spec(\boldk[t])$, $U=Spec(\boldk[t,t^{-1}])$, and $m$ an integer dividing $q-1$.  Consider the $\boldk[t]$-module $\mM=\boldk[t,t^{-1}]\sqrt[m]{t}$ and its obvious Frobenius map $F(f\sqrt[m]{t})=f^qt^{\frac{q-1}{m}}\sqrt[m]{t}$.  This gives $\mM$ the structure of a unit $F$-module on $X$.  $\mM$ is generated as a unit $F$-module by the morphism $A_j:\boldk[t] \rightarrow \boldk[t]$ defined as $A: f \mapsto ft^{j(q-1)-\frac{q-1}{m}}$ for any $j \geq 1$.  This generating morphism is the same as choosing the $\bD_X$-module generator $t^{-j}\sqrt[m]{t}$ because $\mu_0(\boldk[t]) = \boldk[t]t^{-j}\sqrt[m]{t} \subset \boldk[t,t^{-1}]\sqrt[m]{t}$.\\

 In characteristic $0$, $b_{t^{-j}\sqrt[m]{t}}(s)=(s+1-j+\frac{1}{m})$ and $b_{\mathbb{C}[t,t^{-1}]\sqrt[m]{t}}(s)=(s-\frac{1}{m})$.  We will show that $b_{A_j}(s)=(s-\frac{1}{m})$.  To ease notation, set $\beta_e = j+\frac{q^e-1}{m}$ then for all $e \gg 0$, 
$$H^e_n(\tau) = \begin{cases}  0 & \text{ if } n \neq q^e-\beta_e\\
															\tau^{j-1} & \text{ if } n = q^{e}-\beta_e\\
															\end{cases}$$
															
Therefore, either using \ref{eigenvaluelem} or the theory developed for simple list test ideals, $S_e=\{\frac{q^e-\beta_e}{q^e}\}$.  
The only  limit point of the sets $\{S_e\}$ is clearly $(1-\frac{1}{m})$ and so $b_{A_j}(s-\frac{1}{m})$.  Notice that this is independent of $j$.  
\end{exa}

\begin{exa}\label{wildexample}(Pushforward of the wildly-ramified Artin-Schreier local system onto $\mathbb{A}^1$)\
Let $X=\mathbb{A}^1=Spec(\boldk[t])$, $Z=Spec(\boldk[t]/(t))$, and $\mM=\boldk[t,t^{-1},u]/(u^q+tu^{q-1}-t)$ considered as a $\boldk[t]$-module with Frobenius morphism $F(f(t,u))=f(t,u)^q$.  This Frobenius makes $\mM$ into a unit $F$-module over $X$ which is of rank $q$ over $X\setminus Z$.  It is the pushforward of the structure sheaf $F$-crystal on the \'etale locus of an Artin-Schreier cover that is wildly ramified at the origin.  A coherent generator \footnote{This is the generator obtained by taking the obvious structure morphism on the Artin-Schreier cover of $\mathbb{P}^1$ and pushing it forward to $\mathbb{A}^1$} of this module is $M = \boldk[t]^{\oplus q}$ and
$$A(e_j) = t^{q-1}e_j - \sum_{i=0}^{j-1}\binom{j}{i} t^{q-1-j+i}e_i$$ where is the standard basis $e_0,...,e_{q-1}$ of $\boldk[t]^{\oplus q}$.\\

If we define negative indicies to be $0$ then
$$H_n^1(\tau)e_j =H_n^{1}e_j= \begin{cases} -\binom{j}{n+j+1-q}e_{n+j+1-q} & n \neq q-1 \\ e_j & n =q-1 \end{cases}$$
and
$$H_{i_0+i_1q+...+i_{e-1}q^{e-1}}^e = H_{i_{e-1}}^1...H_{i_1}^1H^1_{i_0}.$$

The only jumping numbers of the list test module are of the form $\frac{mq^{e-1}}{q^{e}}$ for $0 < m \leq q-1$.  It follows that $b_A(s)$ divides
$\prod_{0 \leq a < q} (s - \frac{a}{q})$.
\end{exa}

Department of Mathematics, Northwestern University\\
2033 Sheridan Road, Evanston, IL 60208\\
\textit{E-mail: tstadnik@math.northwestern.edu}\\

\begin{thebibliography}{AMBL05}

\bibitem[AMBL05]{AMBL}
Josep Alvarez-Montaner, Manuel Blickle, and Gennady Lyubeznik.
\newblock Generators of {$D$}-modules in positive characteristic.
\newblock {\em Math. Res. Lett.}, 12(4):459--473, 2005.

\bibitem[Bli08]{Bli}
Manuel Blickle.
\newblock Minimal {$\gamma$}-sheaves.
\newblock {\em Algebra Number Theory}, 2(3):347--368, 2008.

\bibitem[BMS08]{BMS}
Manuel Blickle, Mircea Musta{\c{t}}{\v{a}}, and Karen~E. Smith.
\newblock Discreteness and rationality of {$F$}-thresholds.
\newblock {\em Michigan Math. J.}, 57:43--61, 2008.
\newblock Special volume in honor of Melvin Hochster.

\bibitem[BMS09]{BMS1}
Manuel Blickle, Mircea Musta{\c{t}}{\u{a}}, and Karen~E. Smith.
\newblock {$F$}-thresholds of hypersurfaces.
\newblock {\em Trans. Amer. Math. Soc.}, 361(12):6549--6565, 2009.

\bibitem[BO78]{BO}
Pierre Berthelot and Arthur Ogus.
\newblock {\em Notes on crystalline cohomology}.
\newblock Princeton University Press, Princeton, N.J., 1978.

\bibitem[BS05]{BS}
Nero Budur and Morihiko Saito.
\newblock Multiplier ideals, {$V$}-filtration, and spectrum.
\newblock {\em J. Algebraic Geom.}, 14(2):269--282, 2005.

\bibitem[DI71]{DMI}
Frank DeMeyer and Edward Ingraham.
\newblock {\em Separable algebras over commutative rings}.
\newblock Lecture Notes in Mathematics, Vol. 181. Springer-Verlag, Berlin,
  1971.

\bibitem[EK04]{EK}
Matthew Emerton and Mark Kisin.
\newblock The {R}iemann-{H}ilbert correspondence for unit {$F$}-crystals.
\newblock {\em Ast\'erisque}, (293):vi+257, 2004.

\bibitem[ELSV04]{ELSV}
Lawrence Ein, Robert Lazarsfeld, Karen~E. Smith, and Dror Varolin.
\newblock Jumping coefficients of multiplier ideals.
\newblock {\em Duke Math. J.}, 123(3):469--506, 2004.

\bibitem[Haa87]{Has}
Burkhard Haastert.
\newblock \"{U}ber {D}ifferentialoperatoren und {${\bf D}$}-{M}oduln in
  positiver {C}harakteristik.
\newblock {\em Manuscripta Math.}, 58(4):385--415, 1987.

\bibitem[HY03]{HY}
Nobuo Hara and Ken-Ichi Yoshida.
\newblock A generalization of tight closure and multiplier ideals.
\newblock {\em Trans. Amer. Math. Soc.}, 355(8):3143--3174 (electronic), 2003.

\bibitem[Lyu97]{Lyu}
Gennady Lyubeznik.
\newblock {$F$}-modules: applications to local cohomology and {$D$}-modules in
  characteristic {$p>0$}.
\newblock {\em J. Reine Angew. Math.}, 491:65--130, 1997.

\bibitem[Mal83]{Mal}
B.~Malgrange.
\newblock Polyn\^omes de {B}ernstein-{S}ato et cohomologie \'evanescente.
\newblock In {\em Analysis and topology on singular spaces, {II}, {III}
  ({L}uminy, 1981)}, volume 101 of {\em Ast\'erisque}, pages 243--267. Soc.
  Math. France, Paris, 1983.

\bibitem[Mus09]{Mus}
Mircea Musta{\c{t}}{\u{a}}.
\newblock Bernstein-{S}ato polynomials in positive characteristic.
\newblock {\em J. Algebra}, 321(1):128--151, 2009.

\bibitem[Sab87]{Sab}
C.~Sabbah.
\newblock {$D$}-modules et cycles \'evanescents (d'apr\`es {B}. {M}algrange et
  {M}. {K}ashiwara).
\newblock In {\em G\'eom\'etrie alg\'ebrique et applications, {III} ({L}a
  {R}\'abida, 1984)}, volume~24 of {\em Travaux en Cours}, pages 53--98.
  Hermann, Paris, 1987.

\end{thebibliography}
\end{document}